\numberwithin{equation}{subsection}
\newtheorem{theorem}[equation]{Theorem}
\newtheorem*{theorem*}{Theorem}
\newtheorem{corollary}[equation]{Corollary}
\newtheorem{proposition}[equation]{Proposition}
\newtheorem*{proposition*}{Proposition}
\newtheorem{lemma}[equation]{Lemma}
\newtheorem*{conjecture*}{Conjecture}
\theoremstyle{definition}
\newtheorem{definition}[equation]{Definition}
\newtheorem{example}[equation]{Example}
\newtheorem{remark}[equation]{Remark}
\newcommand{\R}{\mathbb{R}}
\newcommand{\C}{\mathbb{C}} 
\newcommand{\Z}{\mathbb{Z}}
\renewcommand{\Re}{\mathrm{Re}}
\renewcommand{\Im}{\mathrm{Im}}
\DeclareMathOperator{\mult}{mult}
\DeclareMathOperator{\Ad}{Ad} 
\DeclareMathOperator{\Hom}{Hom}
\DeclareMathOperator{\Ind}{Ind}
\DeclareMathOperator{\InfCh}{Inf\,Ch}
\DeclareMathOperator{\ImInfCh}{Im\,Inf\,Ch} 
\DeclareMathOperator{\ReInfCh}{Re\,Inf\,Ch}
\newcommand{\lie}{\mathfrak}
\DeclareMathOperator{\Sym}{Sym}
\newcommand{\GG}{\pmb{G}}
\newcommand{\CC}{\mathbb{C}}
\newcommand{\temp}{\mathrm{tempered}}
\newcommand{\dom}{\mathrm{dom}}
\title{Operator K-Theory and Tempiric Representations}
\author{Jacob Bradd,  Nigel Higson and Robert Yuncken%
\thanks
{
This research was undertaken during a visit to the Sydney Mathematical Research Institute (SMRI); 
N.~Higson and R.~Yuncken are grateful for the support and hospitality of the SMRI, the University of New South Wales, and the University of Wollongong.  
J.~Bradd and R.~Yuncken were supported by project OpART of the Agence Nationale de la Recherche (ANR-23-CE40-0016).  This article is based upon work from COST Action CaLISTA CA21109 supported by COST (European Cooperation in Science and Technology): \url{www.cost.eu}.  N.~Higson was supported  by the NSF grant DMS-1952669.
}
}
\date{}
\begin{document}

\maketitle

  \abstract{David Vogan proved that if $G$ is a real reductive group, and if $K$ is a maximal compact subgroup of $G$, then  every irreducible representation  of $K$  is included as a minimal $K$-type in  precisely one tempered, irreducible unitary  representation   of $G$ with real infinitesimal character, and that moreover  it is included there with multiplicity one and is the unique minimal $K$-type in that representation.   We shall prove that  the Connes-Kasparov isomorphism in operator $K$-theory is equivalent to a $K$-theoretic version of Vogan's result.}

%  \tableofcontents
  
 \section{Introduction} 
  
Alexandre Afgoustidis has suggested   the name \emph{tempiric representation} for any irreducible, tempered, unitary, real infinitesimal character representation of a real reductive group.   The goal of this paper is to use the Connes-Kasparov isomorphism in operator $K$-theory to prove a $K$-theoretic version of the following theorem of David Vogan: if $G$ is a real reductive group, and if $K$ is a maximal compact subgroup of $G$, then  every irreducible representation  of $K$ is included as a minimal $K$-type in    precisely one tempiric representation of $G$, and  moreover it is included there  with multiplicity one and is the unique minimal $K$-type in that representation.  See \cite{Voganbook} for the representation-theoretic ideas involved, and \cite[Thm.~1.2]{Vogan07} for the precise statement of the theorem.  See   the introduction to \cite{ClareHigsonSongTang24} and the paper \cite{LafforgueICM} for  background information about the Connes-Kasparov isomorphism from the points of view of representation theory and operator $K$-theory, respectively. 

In fact we shall also show that, in the reverse direction, our $K$-theoretic version of Vogan's theorem implies the Connes-Kasparov isomorphism.  The main result of this paper therefore has two parts.  It is stated precisely in Section~\ref{sec-vogans-thm} below, but informally, it is as follows:

\begin{theorem*}
The following two assertions are equivalent: 
\begin{enumerate}[\rm (i)]

\item For every real reductive group $G$, the Connes-Kasparov morphism in operator $K$-theory for the group $G$ is an isomorphism of abelian groups.

\item  For every real reductive group $G$ and maximal compact subgroup $K$, there is an isomorphism of abelian groups  from the representation ring $R(K)$ to the free abelian group on the tempiric representations of $G$ that maps an irreducible representation $\tau \in \widehat K$ to the combination $\sum_\pi \mathrm{mult} ( \tau, \pi)\cdot \pi $ whose coefficients are the multiplicities with which $\tau$ occurs in the tempiric representations of $\pi$.

\end{enumerate}

\end{theorem*}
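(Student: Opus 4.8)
The plan is to translate both (i) and (ii) into the assertion that a certain integer matrix relating $R(K)$ to the free abelian group on the tempiric representations of $G$ is invertible over $\mathbb{Z}$, and then to show that the two matrices so produced are equivalent under unimodular changes of basis. On the representation-theoretic side this is quick to set up: the assignment $\tau \mapsto \sum_\pi \mathrm{mult}(\tau,\pi)\cdot\pi$ is $\mathbb{Z}$-linear and, because for each $\tau$ only finitely many tempiric representations contain $\tau$, it is given by a locally finite matrix $B$ with rows indexed by the tempiric representations and columns by $\widehat K$; statement (ii) is exactly the assertion that $B$ is invertible over $\mathbb{Z}$. On the operator-algebraic side I would use the standard reduction of the Connes--Kasparov problem for $G$ to the statement that Dirac induction $\mathrm{Dind}\colon R(K)\to K_*(C^*_r(G))$ is an isomorphism, together with the $C^*$-algebraic form of the Harish-Chandra Plancherel theorem used in the cited work: $C^*_r(G)$ has a finite filtration by parabolic support whose subquotients are, up to Morita equivalence, algebras of $W_\sigma$-invariant $\mathcal{K}$-valued functions on the Euclidean spaces $\mathfrak{a}^*_M$, indexed by $G$-conjugacy classes of cuspidal data $(M,\sigma)$. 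Contracting each $\mathfrak{a}^*_M$ to its origin computes $K_*(C^*_r(G))$ and produces, in suitable bases, the matrix $A$ of $\mathrm{Dind}$; statement (i) is that $A$ is invertible over $\mathbb{Z}$.

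The heart of the argument is the comparison of $A$ and $B$. First I would arrange the filtration of $C^*_r(G)$, Vogan's norm filtration of $R(K)$, and the induced filtration of the free abelian group on tempiric representations so that both $\mathrm{Dind}$ and the multiplicity map $B$ become filtered; then it suffices to compare the associated graded maps, one cuspidal datum at a time. For a fixed $(M,\sigma)$, induction in stages together with the $L^2$-index theorem reduces the $\mathrm{Dind}$-block to Dirac induction into the discrete series of the Levi $M$ precomposed with restriction $R(K)\to R(K\cap M)$, where the theorems of Atiyah--Schmid and Parthasarathy make it explicit; the real-infinitesimal-character hypothesis is used to see that the relevant Dirac cohomology is concentrated in a single degree, so the Euler characteristics computing $\mathrm{Dind}$ involve no cancellation. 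On the other side, the $B$-block records the $K$-types of $\mathrm{Ind}^G_P\sigma$, which by Frobenius reciprocity for $K\cap M\hookrightarrow K$ and the Euler-characteristic principle for $\mathfrak{n}$-homology are controlled by the same $K\cap M$-type data on $\sigma$. Identifying the two blocks up to unimodular transformations (tensoring with the half-spinor modules $S^\pm_{\mathfrak p}$ and $S^\pm_{\mathfrak p\cap\mathfrak m}$, and passage between $K$- and $K\cap M$-types) gives $A = UBW$ with $U,W$ invertible over $\mathbb{Z}$, and hence (i) $\iff$ (ii).

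The main obstacle is precisely this blockwise identification: reconciling the index-theoretic data (the $L^2$-index theorem, the Atiyah--Schmid/Parthasarathy analysis of the Dirac cohomology of the discrete series, and its single-degree concentration under the real-infinitesimal-character assumption) with the branching of $K$-types through parabolic induction. Two features make it delicate. One is that when $\mathfrak{a}_M$ has positive dimension and $W_\sigma$ acts nontrivially, the fibre of $\mathrm{Ind}^G_P\sigma$ over the origin need not be irreducible, so several tempiric representations share a cuspidal datum; keeping the two filtrations genuinely compatible across this phenomenon, and matching the $K$-theory of the subquotient with the combinatorics of the minimal $K$-types, is the crux. The other is ensuring that every auxiliary change of basis is truly unimodular, so that invertibility over $\mathbb{Z}$ is transported in both directions and one obtains the equivalence rather than a single implication.
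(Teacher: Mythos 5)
Your overall plan---translating both sides into invertibility of locally finite integer matrices and then comparing them---captures the spirit of what must happen, but the execution has two genuine gaps, and the route is also substantially heavier than the one the paper takes.

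The first gap is structural, and it is the more serious one. You set up $A$ as the matrix of Dirac induction $R(K)\to K_*(C^*_r(G))$ and $B$ as the multiplicity matrix $R(K)\to R_{\mathrm{tempiric}}(G)$, and aim to show $A=UBW$. But these matrices do not have comparable targets. After running the Plancherel filtration, $K_*(C^*_r(G))$ acquires a basis indexed by pairs $(I,\sigma)$ with $\sigma\in\widehat M_{I,\mathrm{tempiric}}$, that is, by tempiric representations of \emph{all} the standard Levi subgroups $M_I$, not just of $G$. The free abelian group $R_{\mathrm{tempiric}}(G)$ involves only $I=S$. So the entries of $A$ and $B$ are quantified over different index sets, and there is no blockwise unimodular comparison between them. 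What the paper's calculation (Theorem~\ref{thm-k-theory-of-cstar-calculation}) actually shows is that the $I$-th block of the relevant $K$-theory map is the multiplicity map $\mult_I\colon R(K_I)\to R_{\mathrm{tempiric}}(M_I)$ for the \emph{Levi}, not a piece of $\mult$ for $G$. Consequently the equivalence of (i) and (ii) is intrinsically an equivalence of ``for every real reductive group'' statements, proved by induction on real rank: the inductive hypothesis that $\mult_I$ is an isomorphism for every proper $I$ is used to kill $K_*(C^*_r(\GG;I))$ for $I\subsetneq S$, after which the $I=S$ block is isolated. Your proposal has no inductive structure and therefore cannot isolate the block you need; attempting a single-group, single-matrix comparison fails already at the level of index sets.

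The second gap is that you never engage with the mechanism the paper uses to make the comparison elementary, namely the deformation to the normal cone. The paper replaces Dirac induction entirely with Connes' reformulation ($K_*(C^*_r(\GG))=0$) and then shows $C^*_r(\GG;I)\cong\operatorname{Cone}(\alpha_0)$ for an explicit morphism $\alpha_0\colon C^*_r(G_0;I)\to C^*_r(G;I)$; the induced $K$-theory map is computed by the trivial observation that $L^2(K,H_\sigma)=\bigoplus_{\tau'}\mult(\tau':\sigma)\,L^2(K,H_{\tau'})$. No $L^2$-index theorem, Atiyah--Schmid, Parthasarathy, Dirac cohomology, or $\mathfrak n$-homology is needed. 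Your route, using Vogan's norm filtration together with the parabolic-support filtration and trying to reconcile them via equivariant index theory, is essentially the content of the cited representation-theoretic proofs of Connes--Kasparov and is much harder than what is needed; it also risks circularity, since the norm filtration argument for $B$ is exactly the linear-algebra step that proves Vogan's theorem, which the paper is deliberately avoiding as input. The ``crux'' you identify (several tempiric representations sharing a cuspidal datum, $W_\sigma$-reducibility at $\nu=0$) is real, but the remedy is precisely the induction-on-rank structure you omit, not finer index-theoretic bookkeeping.
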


The proof of our theorem  is based on the notion of the imaginary part of the infinitesimal character of an irreducible unitary representation, reviewed in Section~\ref{sec-imaginary-part-of-inf-ch}.  The importance of this concept has been emphasized by David Vogan \cite{Vogan00}. We shall use it in Section~\ref{sec-reduced-c-star-algebra-and-im-inf-ch} to filter the reduced $C^*$-algebra of a real reductive group by ideals whose associated subquotients have an especially simple structure.  

We shall do something similar in Section~\ref{sec-cartan-motion-group}  for the Cartan motion group associated to a real reductive group.  This is much easier.  Then in Section~\ref{sec-deformation-family} we shall repeat the construction for all the groups in a smooth family of groups that interpolates between a reductive group and its Cartan motion group (the deformation to the normal cone construction).  By good fortune, perhaps, the $K$-theory groups here can be computed in a very simple and direct manner from the morphisms defined in part (ii) of the theorem above. Our main results follow from this and Alain Connes' reformulation of the Connes-Kasparov isomorphism as an assertion about the operator $K$-theory of the smooth family \cite[Prop.~9,~p.141]{ConnesNCG}.

Our results have much in common with the so-called Mackey bijection between irreducible tempered unitary representations of a real reductive group and the irreducible unitary representations of its Cartan motion group. In particular, the work of Afgoustidis on the Mackey bijection \cite{AfgoustidisConnesKasparov,AfgoustidisMackeyBijection} is extremely closely related (all of the necessary ideas already appear in these two papers of Afgoustidis; we have simply arranged them in a different way). Other very relevant works are \cite{DeBelloHigson24}, \cite{HigsonRoman20} and \cite{ClareHigsonRoman24}.

%------------------------------------------------------

\section{Imaginary part of the infinitesimal character}
\label{sec-imaginary-part-of-inf-ch}
In this section we shall gather most of the information from Lie theory and representation theory that we shall need in the paper. 

\subsection{Real reductive groups}
Throughout the paper, we shall use the term \emph{real reductive group} in  the sense that is used in  Vogan's monograph \cite[Ch.\,0,\,Sec.\,1]{Voganbook}. This means among other things that a real reductive group $G$ is required to be linear, but it need not be connected, although there are restrictions in that   regard.  Examples include the group of real points of any connected reductive algebraic group defined over $\R$, including for instance $SL(n,\R)$, $Sp(2n,\R)$, etc.

We shall  use the following structure, as discussed in \cite[Ch.\,0,\,Sec.\,1]{Voganbook}. Every real reductive  group  $G$ is equipped with a \emph{Cartan involution}
\[
\theta \colon G \longrightarrow G
\]
whose fixed subgroup is precisely $K$. We shall write the \emph{Cartan decomposition} of the Lie algebra $\mathfrak{g}$ into $\pm 1$ eigenspaces of $\theta$ as $\mathfrak{g} = \mathfrak{k} \oplus \mathfrak{s}$
(we shall not follow the custom of indicating real Lie algebras by a subscript $0$; instead we shall indicate complexifications by a subscript $\C$). We can and will equip $\mathfrak{g}$ with an invariant nondegenerate symmetric bilinear form 
\begin{equation}
    \label{eq-def-of-b}
B\colon \mathfrak{g}\times \mathfrak{g} \longrightarrow \R
\end{equation}
that is positive-definite on $\mathfrak{s}$ and negative-definite on $\mathfrak{k}$, and that is invariant under $\theta$.   When we speak of inner products on subspaces of $\mathfrak{k}$ or of $\mathfrak{s}$, we shall always mean the positive-definite inner products on these subspaces obtained from $B$.

We shall fix throughout a maximal abelian subspace $\mathfrak{a}_{\min}$ of $\mathfrak{s}$.  We shall fix a system of positive restricted roots  
\begin{equation}
    \label{eq-fixed-system-of-positive-roots}
\Delta^+(\mathfrak{g},\mathfrak{a}_{\min}) \subseteq \Delta(\mathfrak{g},\mathfrak{a}_{\min}) ,
\end{equation}
along with the associated \emph{Iwasawa decomposition}
$G = K A_{\min}N_{\min}$ and \emph{minimal parabolic subgroup} 
\begin{equation}
    \label{eq-fixed-minimal-parabolic-subgroup}
P_{\min} = M_{\min}A_{\min}N_{\min}
\end{equation}
(see for instance \cite[Ch.\,VI]{KnappBeyond}). Here  $M_{\min}$ is  the centralizer of  $\mathfrak{a}_{\min}$ in $K$. 

We shall always denote by $S\subseteq \Delta^{+}(\mathfrak{g},\mathfrak{a}_{\min})$ the  set   of simple restricted roots. The \emph{standard parabolic subgroups} of $G$ are the parabolic subgroups that include the chosen minimal parabolic subgroup $P_{\min}$, and they are in bijection with  subsets  $I\subseteq S$, as follows:
\begin{equation}
    \label{eq-standard-parabolic-subgroups}
P_I=M_IA_IN_I,  \quad \mathfrak{a}_I = \bigcap \{ \, \operatorname{ker}(\alpha) : \alpha \in I \,\}.
\end{equation}
Here, the Levi subgroup $M_IA_I$ is the centralizer of $\mathfrak{a}_I$ in $G$, and $N_I$  is determined by the requirement that $P_I$ be standard.  See for instance \cite[Sec.\,VII.7]{KnappBeyond}.

Finally, if $P=MAN$ is a standard parabolic subgroup,   then we shall always regard the dual vector space $\mathfrak{a}^*$ as a vector subspace of  $\mathfrak{a}_{\min}^*$ by means of the map
\begin{equation}
    \label{eq-inclusion-of-a-spaces}
\mathfrak{a}^* \longrightarrow \mathfrak{a}_{\min}^*
\end{equation}
that maps a linear functional on $\mathfrak{a}$ to its unique extension to a linear functional on $\mathfrak{a}_{\min}$ that vanishes on the orthogonal complement of $\mathfrak{a}$ in $\mathfrak{a}_{\min}$. The orthogonal complement may also be described as the intersection of $\mathfrak{a}_{\min}$ with $\mathfrak{m}_I$.  When $\mathfrak{a}=\mathfrak{a}_I$, the  image of \eqref{eq-inclusion-of-a-spaces}
 is the space 
\begin{equation}
    \label{eq-characterization-of-a-i-star}
 \mathfrak{a}_I^* \cong \{ \, \gamma \in \mathfrak{a}_{\min}^* : \langle \alpha, \gamma\rangle = 0\,\, \forall \alpha\in I \,\} .
\end{equation}

\subsection{Imaginary part of the infinitesimal character}
 
\begin{definition} 
Denote by $\mathcal{Z}(\mathfrak{g_\C})$ the center of the enveloping algebra of $\mathfrak{g_\C}$, the complexification of the real Lie algebra $\mathfrak{g}$. If $\mathfrak{h_\C}$ is any Cartan subalgebra of $\mathfrak{g_\C}$, then we shall denote by  
\begin{equation}
    \label{eq:Harish-Chandra}
\xi_{\mathfrak{h}}\colon \mathcal{Z}(\mathfrak{g_\C}) \stackrel\cong \longrightarrow \operatorname{Sym}(\mathfrak{h_\C})^{W(\mathfrak{g_\C},\mathfrak{h_\C})} 
\end{equation}
the Harish-Chandra isomorphism (see e.g.~\cite[Sec.~V.5]
{KnappBeyond}).
\end{definition} 

On the right-hand side of the Harish-Chandra isomorphism is the Weyl group-invariant part of the symmetric algebra of $\mathfrak{h_\C}$, or equivalently the algebra of  Weyl group-invariant polynomial functions on $\mathfrak{h_\C}^*$.   
Every character of $\operatorname{Sym}(\mathfrak{h_\C})^{W(\mathfrak{g_\C},\mathfrak{h_\C})}$, or in other words every algebra homomorphism 
\[
\operatorname{Sym}(\mathfrak{h_\C})^{W(\mathfrak{g}_\C,\mathfrak{h}_\C)} \longrightarrow \C ,
\]
is given by evaluation   at some weight $\lambda \in \mathfrak{h}_\C^*$. Two weights   correspond to same character if and only if they lie in the same $W(\mathfrak{g}_\C,\mathfrak{h}_\C)$-orbit.  

It follows that every character   
\[
\chi \colon \mathcal{Z}(\mathfrak{g}_\C)\longrightarrow \C
\]
may be written as a composition
\begin{equation}
    \label{eq-form-of-a-character}
 \mathcal{Z}(\mathfrak{g}_\C)   \stackrel{\xi_{\mathfrak{h}}}\longrightarrow \operatorname{Sym}(\mathfrak{h})^{W(\mathfrak{g}_\C,\mathfrak{h}_\C)} \stackrel{\mathrm{eval}_\lambda}\longrightarrow \C
\end{equation}
of the Harish-Chandra isomorphism $\xi_{\mathfrak{h}}$ and   evaluation at some $\lambda\in \mathfrak{h}_\C^*$,  which  unique up to the action of $W(\mathfrak{g}_\C,\mathfrak{h}_\C)$.  We shall write such a composition as 
\[
\chi = \xi_{\mathfrak{h},\lambda} \colon \mathcal{Z}(\mathfrak{g}_\C)\longrightarrow \C .
\]

Following Vogan \cite[Sec.\,2]{Vogan00}, we wish to identify some of these characters as \emph{real} and some as \emph{imaginary}.

\begin{definition} 
\label{def-real-and-imaginary-weights}
Let  $\mathfrak{h} = \mathfrak{t} \oplus \mathfrak{a}$   be a $\theta$-stable Cartan subalgebra of $\mathfrak{g}$, decomposed into its compact and noncompact parts.  The \emph{real part} of its complexification, $\mathfrak{h}_\C$,  is the real vector subspace  
\[
\Re(\mathfrak{h}_\C) =   i\mathfrak{t}\oplus \mathfrak{a}  \subseteq  \mathfrak{h}_\C .
\]
If the group $G$ is realized as a group of real matrices in such a way that the Cartan involution is $\theta(X) = - X^{\mathrm{transp.}}$, then these are the matrices in $\mathfrak{h}_\C$ with real eigenvalues. A weight $\lambda \in \mathfrak{h}_\C^*$ is \emph{real} if it is real-valued on $\Re(\mathfrak{h}_\C)$. Similarly it is  \emph{imaginary} if it is imaginary-valued on $\Re(\mathfrak{h}_\C)$.  
\end{definition}

\begin{lemma}[{\cite[Lemma\,2.6]{Vogan00}}]
\label{lem-canonical-real-part-of-h-mod-w}
    The action of the Weyl group $W(\mathfrak{g}_\C,\mathfrak{h}_\C)$  on $\mathfrak{h}_\C$ preserves the real subspace $\Re(\mathfrak{h}_\C)$. More generally, if $\Ad_g$ is any inner automorphism of the complex Lie algebra $\mathfrak{g}_\C$ that conjugates a Cartan subalgebra $\mathfrak{h}_{1,\C}$ to a second, $\mathfrak{h}_{2,\C}$, then $Ad_g$ conjugates $\Re(\mathfrak{h}_{1,\C})$ and $\Im(\mathfrak{h}_{1,\C})$ to $\Re(\mathfrak{h}_{2,\C})$ and $\Im(\mathfrak{h}_{2,\C})$, respectively.
\end{lemma}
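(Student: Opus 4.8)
The plan is to characterise $\Re(\mathfrak{h}_\C)$ by a condition that involves only the bracket of $\mathfrak{g}_\C$, so that its invariance under an inner automorphism becomes automatic; the first assertion is then the special case $\mathfrak{h}_{1,\C}=\mathfrak{h}_{2,\C}$, since every element of $W(\mathfrak{g}_\C,\mathfrak{h}_\C)$ is induced by an inner automorphism of $\mathfrak{g}_\C$ that normalises $\mathfrak{h}_\C$. I would begin by reducing to the semisimple case. The splitting $\mathfrak{g}=\mathfrak{z}\oplus[\mathfrak{g},\mathfrak{g}]$ into centre and derived subalgebra is $\theta$-stable; every Cartan subalgebra contains $\mathfrak{z}$, so a $\theta$-stable Cartan $\mathfrak{h}$ splits as $\mathfrak{z}\oplus\mathfrak{h}'$ with $\mathfrak{h}':=\mathfrak{h}\cap[\mathfrak{g},\mathfrak{g}]$ again $\theta$-stable, and correspondingly $\Re(\mathfrak{h}_\C)=\Re(\mathfrak{z}_\C)\oplus\Re(\mathfrak{h}'_\C)$. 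Over the central summand there is nothing to prove: $\Re(\mathfrak{z}_\C)=i(\mathfrak{z}\cap\mathfrak{k})\oplus(\mathfrak{z}\cap\mathfrak{s})$ depends only on $\theta|_{\mathfrak{z}}$, while every inner automorphism of $\mathfrak{g}_\C$ fixes $\mathfrak{z}_\C$ pointwise and preserves $[\mathfrak{g}_\C,\mathfrak{g}_\C]$. So it suffices to establish the characterisation when $\mathfrak{g}_\C$ is semisimple.

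For semisimple $\mathfrak{g}_\C$ I would prove
\[
\Re(\mathfrak{h}_\C)=\bigl\{\,H\in\mathfrak{h}_\C:\alpha(H)\in\R\ \text{for all }\alpha\in\Delta(\mathfrak{g}_\C,\mathfrak{h}_\C)\,\bigr\}.
\]
The inclusion ``$\subseteq$'' is the statement that the roots take real values on $i\mathfrak{t}\oplus\mathfrak{a}$: with the form $B$ of \eqref{eq-def-of-b}, the bilinear form $B_\theta(X,Y):=-B(X,\theta Y)$ is positive-definite, and $\operatorname{ad}(X)$ is $B_\theta$-skew-symmetric for $X\in\mathfrak{k}$ and $B_\theta$-symmetric for $X\in\mathfrak{s}$; hence the eigenvalues of $\operatorname{ad}(X)$ — among which are the numbers $\alpha(X)$, $\alpha\in\Delta(\mathfrak{g}_\C,\mathfrak{h}_\C)$ — are purely imaginary for $X\in\mathfrak{t}$ and real for $X\in\mathfrak{a}$. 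For ``$\supseteq$'', observe that $\mathfrak{h}_\C=\Re(\mathfrak{h}_\C)\oplus i\Re(\mathfrak{h}_\C)$, so an arbitrary $H\in\mathfrak{h}_\C$ writes uniquely as $H_1+iH_2$ with $H_1,H_2\in\Re(\mathfrak{h}_\C)$; then $\alpha(H_1),\alpha(H_2)\in\R$ by the first part, so $\alpha(H)\in\R$ for every root $\alpha$ forces $\alpha(H_2)=0$ for every root, whence $H_2=0$ because the roots span $\mathfrak{h}_\C^*$.

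Given the characterisation (applied to the semisimple algebra $[\mathfrak{g}_\C,\mathfrak{g}_\C]$ and the Cartan $\mathfrak{h}'$), the Lemma is immediate. If $\Ad_g$ maps $\mathfrak{h}_{1,\C}$ onto $\mathfrak{h}_{2,\C}$, it preserves $[\mathfrak{g}_\C,\mathfrak{g}_\C]$ and hence maps $\mathfrak{h}'_{1,\C}$ onto $\mathfrak{h}'_{2,\C}$, and the dual map $\alpha\mapsto\alpha\circ\Ad_g^{-1}$ is a bijection $\Delta([\mathfrak{g}_\C,\mathfrak{g}_\C],\mathfrak{h}'_{1,\C})\to\Delta([\mathfrak{g}_\C,\mathfrak{g}_\C],\mathfrak{h}'_{2,\C})$ satisfying $(\alpha\circ\Ad_g^{-1})(\Ad_g H)=\alpha(H)$; therefore $\Ad_g$ carries $\{H\in\mathfrak{h}'_{1,\C}:\alpha(H)\in\R\ \forall\alpha\}=\Re(\mathfrak{h}'_{1,\C})$ onto $\Re(\mathfrak{h}'_{2,\C})$. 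Since $\Ad_g$ is the identity on $\mathfrak{z}_\C$ and $\Re(\mathfrak{h}_{j,\C})=\Re(\mathfrak{z}_\C)\oplus\Re(\mathfrak{h}'_{j,\C})$, this gives $\Ad_g(\Re(\mathfrak{h}_{1,\C}))=\Re(\mathfrak{h}_{2,\C})$, and complex linearity of $\Ad_g$ together with $\Im(\mathfrak{h}_{j,\C})=i\,\Re(\mathfrak{h}_{j,\C})$ yields the statement for imaginary parts. I expect the only delicate point to be the bookkeeping around the centre of $\mathfrak{g}_\C$: the ``all roots real'' condition is blind to $\mathfrak{z}_\C$, so one must peel it off and handle it separately; the remaining ingredients — reality of the roots on $\Re(\mathfrak{h}_\C)$ and the fact that the roots span $\mathfrak{h}_\C^*$ in the semisimple case — are standard.
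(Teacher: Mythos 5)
Your proof is correct, and in fact it reproduces the standard argument. The paper itself does not prove this lemma --- it defers to Vogan \cite[Lemma~2.6]{Vogan00} --- so there is no in-paper proof to compare against, but the key idea you use (characterize $\Re(\mathfrak{h}_\C)$, on the semisimple part, as $\{H\in\mathfrak{h}_\C:\alpha(H)\in\R\ \forall\alpha\}$, a condition manifestly preserved by inner automorphisms) is precisely the mechanism behind Vogan's statement. A few remarks on the details: the reduction to the semisimple case is necessary because the root-theoretic condition says nothing about the center of $\mathfrak{g}_\C$, and you handle this correctly by noting that inner automorphisms fix $\mathfrak{z}_\C$ pointwise and that $\Re(\mathfrak{z}_\C)$ is determined by $\theta|_{\mathfrak{z}}$ alone. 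The verification that roots are real on $i\mathfrak{t}\oplus\mathfrak{a}$ via the positive-definite form $B_\theta(X,Y)=-B(X,\theta Y)$ and the symmetry/skew-symmetry of $\operatorname{ad}(X)$ for $X\in\mathfrak{s}$ resp.\ $\mathfrak{k}$ is exactly right, and the converse inclusion uses, as it must, that roots span $\mathfrak{h}_\C^*$ in the semisimple case. The passage from $\Re$ to $\Im$ via $\C$-linearity of $\Ad_g$ and $\Im(\mathfrak{h}_\C)=i\,\Re(\mathfrak{h}_\C)$ closes the argument cleanly. No gaps.
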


\begin{definition}
If $\lambda \in \mathfrak{h}_\C^*$, then   we shall denote by 
\[
\lambda = \Re(\lambda) + \Im(\lambda)
\]
the (unique) decomposition of a weight $\lambda$ as a sum of real and imaginary weights.
\end{definition} 

It follows from the Lemma~\ref{lem-canonical-real-part-of-h-mod-w} that: 

\begin{lemma}[{\cite[Cor.~2.7]{Vogan00}}]
\label{lem-vogans-canonical-real-and-imaginary-parts}
Let $\mathfrak{h}$ be a $\theta$-stable Cartan subalgebra of $\mathfrak{g}$.
Let $\chi\colon \mathcal{Z}(\mathfrak{g}_\C)   \to \C$ be any character.  If $\chi$ is written in the form
\[
\chi = \xi_{\mathfrak{h},\lambda}\colon \mathcal{Z}(\mathfrak{g}_\C)   \longrightarrow \C
\]
for some  $\lambda \in \mathfrak{h}_\C^*$, then the characters 
\[
\xi_{\mathfrak{h},\Re(\lambda)}\colon \mathcal{Z}(\mathfrak{g}_\C)   \to \C
\quad \text{and} \quad \xi_{\mathfrak{h},\Im(\lambda)}\colon \mathcal{Z}(\mathfrak{g}_\C)   \to \C
\]
depend only on $\chi$, and not on the choices of $\mathfrak{h}$ and $\lambda$.
\end{lemma}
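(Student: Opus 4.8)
The plan is to deduce the statement from Lemma~\ref{lem-canonical-real-part-of-h-mod-w} together with two standard features of the Harish--Chandra isomorphism. The first is that $\xi_{\mathfrak{h},\mu}$ depends on a weight $\mu\in\mathfrak{h}_{\C}^{*}$ only through its $W(\mathfrak{g}_{\C},\mathfrak{h}_{\C})$-orbit. The second is its naturality under inner automorphisms: if $\Ad_{g}$ is an inner automorphism of $\mathfrak{g}_{\C}$ carrying a Cartan subalgebra $\mathfrak{h}_{1,\C}$ to a Cartan subalgebra $\mathfrak{h}_{2,\C}$, then, because $\Ad_{g}$ fixes $\mathcal{Z}(\mathfrak{g}_{\C})$ pointwise, it intertwines the two Harish--Chandra isomorphisms in the sense that $(\Ad_{g})_{*}\circ\xi_{\mathfrak{h}_{1}}=\xi_{\mathfrak{h}_{2}}$ as algebra homomorphisms $\mathcal{Z}(\mathfrak{g}_{\C})\to\operatorname{Sym}(\mathfrak{h}_{2,\C})$, where $(\Ad_{g})_{*}$ is induced by the linear isomorphism $\Ad_{g}\colon\mathfrak{h}_{1,\C}\to\mathfrak{h}_{2,\C}$. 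I shall also use repeatedly the elementary remark that, since a weight $\mu$ is real (resp.\ imaginary) exactly when it is real-valued (resp.\ imaginary-valued) on $\Re(\mathfrak{h}_{\C})$, any $\C$-linear isomorphism $T\colon\mathfrak{h}_{1,\C}\to\mathfrak{h}_{2,\C}$ that takes $\Re(\mathfrak{h}_{1,\C})$ onto $\Re(\mathfrak{h}_{2,\C})$ makes $\mu\mapsto\mu\circ T$ send real weights to real weights and imaginary weights to imaginary weights; in view of the uniqueness of the decomposition $\mu=\Re(\mu)+\Im(\mu)$ this forces $\Re(\mu\circ T)=\Re(\mu)\circ T$ and $\Im(\mu\circ T)=\Im(\mu)\circ T$.

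\emph{Step 1 (a fixed Cartan subalgebra $\mathfrak{h}$).} Suppose $\xi_{\mathfrak{h},\lambda}=\xi_{\mathfrak{h},\lambda'}$; then $\lambda'=w\cdot\lambda$ for some $w\in W(\mathfrak{g}_{\C},\mathfrak{h}_{\C})$ acting on $\mathfrak{h}_{\C}^{*}$. The Weyl group acts $\C$-linearly on $\mathfrak{h}_{\C}$, and by the first assertion of Lemma~\ref{lem-canonical-real-part-of-h-mod-w} it preserves $\Re(\mathfrak{h}_{\C})$; hence, arguing as in the remark above, $w$ commutes with the operations $\mu\mapsto\Re(\mu)$ and $\mu\mapsto\Im(\mu)$ on $\mathfrak{h}_{\C}^{*}$, so $\Re(\lambda')=w\cdot\Re(\lambda)$ and $\Im(\lambda')=w\cdot\Im(\lambda)$. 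Since $\xi_{\mathfrak{h},\cdot}$ is constant on $W$-orbits, it follows that $\xi_{\mathfrak{h},\Re(\lambda')}=\xi_{\mathfrak{h},\Re(\lambda)}$ and $\xi_{\mathfrak{h},\Im(\lambda')}=\xi_{\mathfrak{h},\Im(\lambda)}$.

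\emph{Step 2 (the general case).} Let $\mathfrak{h}_{1}$ and $\mathfrak{h}_{2}$ be $\theta$-stable Cartan subalgebras with $\chi=\xi_{\mathfrak{h}_{1},\lambda_{1}}=\xi_{\mathfrak{h}_{2},\lambda_{2}}$; I must show $\xi_{\mathfrak{h}_{1},\Re(\lambda_{1})}=\xi_{\mathfrak{h}_{2},\Re(\lambda_{2})}$, and likewise with $\Im$. Since any two Cartan subalgebras of $\mathfrak{g}_{\C}$ are conjugate by an inner automorphism, choose $\Ad_{g}$ with $\Ad_{g}(\mathfrak{h}_{1,\C})=\mathfrak{h}_{2,\C}$ (restrictions of $\Ad_{g}$ and $\Ad_{g}^{-1}$ to the relevant Cartan subalgebras being understood throughout). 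From $(\Ad_{g})_{*}\circ\xi_{\mathfrak{h}_{1}}=\xi_{\mathfrak{h}_{2}}$ one gets $\mathrm{eval}_{\lambda}\circ\xi_{\mathfrak{h}_{2}}=\mathrm{eval}_{\lambda\circ\Ad_{g}}\circ\xi_{\mathfrak{h}_{1}}$ for every $\lambda\in\mathfrak{h}_{2,\C}^{*}$; taking $\lambda=\lambda_{1}\circ\Ad_{g}^{-1}$ shows $\xi_{\mathfrak{h}_{2},\,\lambda_{1}\circ\Ad_{g}^{-1}}=\chi$, so $\lambda_{2}$ and $\lambda_{1}\circ\Ad_{g}^{-1}$ lie in a common $W(\mathfrak{g}_{\C},\mathfrak{h}_{2,\C})$-orbit, and by Step~1 I may replace $\lambda_{2}$ by $\lambda_{1}\circ\Ad_{g}^{-1}$. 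By the second assertion of Lemma~\ref{lem-canonical-real-part-of-h-mod-w}, $\Ad_{g}$ carries $\Re(\mathfrak{h}_{1,\C})$ onto $\Re(\mathfrak{h}_{2,\C})$ and $\Im(\mathfrak{h}_{1,\C})$ onto $\Im(\mathfrak{h}_{2,\C})$; applying the remark of the first paragraph with $T=\Ad_{g}^{-1}\colon\mathfrak{h}_{2,\C}\to\mathfrak{h}_{1,\C}$ gives $\Re(\lambda_{2})=\Re(\lambda_{1})\circ\Ad_{g}^{-1}$ and $\Im(\lambda_{2})=\Im(\lambda_{1})\circ\Ad_{g}^{-1}$. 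Therefore
\[
\xi_{\mathfrak{h}_{2},\Re(\lambda_{2})}=\mathrm{eval}_{\Re(\lambda_{2})}\circ\xi_{\mathfrak{h}_{2}}=\mathrm{eval}_{\Re(\lambda_{2})\circ\Ad_{g}}\circ\xi_{\mathfrak{h}_{1}}=\mathrm{eval}_{\Re(\lambda_{1})}\circ\xi_{\mathfrak{h}_{1}}=\xi_{\mathfrak{h}_{1},\Re(\lambda_{1})},
\]
using $\Re(\lambda_{2})\circ\Ad_{g}=\Re(\lambda_{1})\circ\Ad_{g}^{-1}\circ\Ad_{g}=\Re(\lambda_{1})$; the identical computation with $\Im$ in place of $\Re$ finishes the argument. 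A different choice of $g$ alters $\Ad_{g}^{-1}$ by precomposition with an automorphism of $\mathfrak{h}_{2,\C}$ of the form $\Ad_{h}|_{\mathfrak{h}_{2,\C}}$ with $h$ inner, i.e.\ with an element of $W(\mathfrak{g}_{\C},\mathfrak{h}_{2,\C})$, so by Step~1 the outcome does not depend on that choice.

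The one point requiring genuine care, and the only place where the argument is not pure bookkeeping, is the naturality identity $(\Ad_{g})_{*}\circ\xi_{\mathfrak{h}_{1}}=\xi_{\mathfrak{h}_{2}}$: one must verify that the $\rho$-shift built into the normalization of the Harish--Chandra isomorphism is transported correctly by $\Ad_{g}$. Concretely, $\Ad_{g}$ takes the Harish--Chandra projection $U(\mathfrak{g}_{\C})\to U(\mathfrak{h}_{1,\C})$ associated to any Borel subalgebra containing $\mathfrak{h}_{1,\C}$ to the projection associated to the image Borel, and it matches up the corresponding half-sums of positive roots; since the final isomorphism does not depend on the choice of Borel, the desired identity follows. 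The substantive geometric input --- the way an inner automorphism of $\mathfrak{g}_{\C}$ moves the real forms $\Re(\mathfrak{h}_{\C})$ --- is exactly Lemma~\ref{lem-canonical-real-part-of-h-mod-w}, which we may assume.
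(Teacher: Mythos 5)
Your proof is correct, and it carries out exactly the deduction the paper itself merely indicates: the paper states Lemma~\ref{lem-vogans-canonical-real-and-imaginary-parts} with the one-line comment that it ``follows from Lemma~\ref{lem-canonical-real-part-of-h-mod-w}'' and otherwise simply cites Vogan's paper for the proof. You have supplied the bookkeeping the paper omits — the constancy of $\xi_{\mathfrak{h},\cdot}$ on Weyl orbits, the naturality of the Harish--Chandra isomorphism under inner automorphisms, and the observation that a $\C$-linear isomorphism preserving the real forms $\Re(\mathfrak{h}_\C)$ commutes with taking real and imaginary parts of weights — and these are precisely the ingredients needed to promote Lemma~\ref{lem-canonical-real-part-of-h-mod-w} to the stated well-definedness claim. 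Your closing remark that the $\rho$-shift is the only place in the naturality identity requiring actual verification is also well taken and correctly resolved by the independence of the Harish--Chandra map from the choice of Borel.
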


\begin{definition}
Given a character $\chi\colon \mathcal{Z}(\mathfrak{g}_\C)   \to \C$ as in Lemma~\ref{lem-vogans-canonical-real-and-imaginary-parts}, we shall write 
\[
\Re(\chi) =\xi_{\mathfrak{h},\Re(\lambda)}\colon \mathcal{Z}(\mathfrak{g}_\C)   \to \C
\quad \text{and} \quad \Im(\chi) =\xi_{\mathfrak{h},\Im(\lambda)}\colon \mathcal{Z}(\mathfrak{g}_\C)   \to \C .
\]
These are the \emph{real and imaginary parts} of  $\chi$.
\end{definition}

Now let $\pi$ be an irreducible unitary representation of $G$. 
Recall that, by Schur's Lemma, $\pi$ has an associated infinitesimal character
\[
\InfCh (\pi) : \mathcal{Z}(\lie{g}_\C) \to \C.
\]
From the above, we can form the real and imaginary parts of the infinitesimal character of $\pi$: 
\[
\operatorname{Re\,Inf\,Ch} (\pi)\colon \mathcal{Z}(\mathfrak{g}_\C)   \to \C \quad \text{and} \quad 
\ImInfCh (\pi) \colon \mathcal{Z}(\mathfrak{g}_\C)   \to \C.
\]

We shall need some information about the real and imaginary parts of the infinitesimal character of an irreducible, tempered unitary representation, starting with the following result:

\begin{theorem}
\label{thm-discrete-series-have-real-inf-ch}
    If $\pi$ is a discrete series representation of a real reductive group, then $\ImInfCh(\pi)=0$.
\end{theorem}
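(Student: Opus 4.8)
The plan is to reduce the statement to Harish-Chandra's description of the discrete series. First I would recall that a real reductive group $G$ admits discrete series representations only when it contains a compact Cartan subgroup $T$, that is, a $\theta$-stable Cartan subgroup with $T\subseteq K$; equivalently, $\operatorname{rank}G=\operatorname{rank}K$. In the possibly disconnected setting of Vogan's monograph this is the form of Harish-Chandra's existence theorem recorded in \cite[Ch.\,0]{Voganbook} (see also \cite[Ch.\,XII]{KnappBeyond} in the connected case). Having fixed such a $T$, its Lie algebra $\mathfrak{t}$ lies inside $\mathfrak{k}$, so $\mathfrak{t}$ is a $\theta$-stable Cartan subalgebra of $\mathfrak{g}$ whose noncompact part $\mathfrak{a}$ is zero; hence, by Definition~\ref{def-real-and-imaginary-weights}, the real subspace $\Re(\mathfrak{t}_\C)$ is exactly $i\mathfrak{t}$.

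Next I would use Harish-Chandra's parametrization of the discrete series: any discrete series representation $\pi$ has infinitesimal character $\xi_{\mathfrak{t},\lambda}$, where the Harish-Chandra parameter $\lambda\in\mathfrak{t}_\C^*$ is a regular weight lying in the coset of the character lattice of $T$ shifted by the half-sum $\rho$ of a system of positive roots of $(\mathfrak{g}_\C,\mathfrak{t}_\C)$. The crucial point is that such a $\lambda$ is a \emph{real} weight in the sense of Definition~\ref{def-real-and-imaginary-weights}. Indeed, the differential of any unitary character of the compact torus $T$ takes purely imaginary values on $\mathfrak{t}$, hence real values on $i\mathfrak{t}=\Re(\mathfrak{t}_\C)$; and every root of $(\mathfrak{g}_\C,\mathfrak{t}_\C)$, and therefore also $\rho$, is likewise real-valued on $i\mathfrak{t}$, since for $X\in\mathfrak{t}\subseteq\mathfrak{k}$ the operator $\operatorname{ad}(X)$ is skew-symmetric for the inner product $-B|_\mathfrak{k}$ and so has purely imaginary eigenvalues. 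Consequently $\Im(\lambda)=0$, and therefore $\ImInfCh(\pi)=\xi_{\mathfrak{t},\Im(\lambda)}=\xi_{\mathfrak{t},0}$, which is precisely the assertion $\ImInfCh(\pi)=0$. By Lemmas~\ref{lem-canonical-real-part-of-h-mod-w} and~\ref{lem-vogans-canonical-real-and-imaginary-parts} it is legitimate to perform this computation using the single Cartan subalgebra $\mathfrak{t}$, even though the definition of $\ImInfCh$ a priori involves a choice of $\theta$-stable Cartan and of $\lambda$.

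I expect the only real difficulty to be bookkeeping rather than substance: one must invoke Harish-Chandra's existence and parametrization theorems in a form valid for the not-necessarily-connected groups allowed in Vogan's framework, and take a little care with the fact that roots of $(\mathfrak{g}_\C,\mathfrak{t}_\C)$ need not individually integrate to characters of $T$ when $G$ is disconnected; but the only property we actually use, namely that roots are imaginary-valued on $\mathfrak{t}$, follows directly from compactness of $T$ as indicated above. If one wished to bypass the classification altogether, a discrete series representation is in particular tempered and square-integrable, and the vanishing of $\ImInfCh$ could alternatively be read off from the Plancherel-theoretic characterization of square-integrable representations; but the argument through the compact Cartan subalgebra is the most direct.
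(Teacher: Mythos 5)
Your proof is correct and follows essentially the same route as the paper: both use Harish-Chandra's classification of the discrete series by Harish-Chandra parameters $\lambda\in i\mathfrak{t}^*$ for a compact Cartan subalgebra $\mathfrak{t}$ (so $\mathfrak{a}=0$), observe that such $\lambda$ is a real weight in Vogan's sense, and conclude $\ImInfCh(\pi)=0$; see Remark~\ref{remark-appendix-harish-chandra-parameters} and the surrounding appendix. Your argument merely spells out in slightly more detail (via compactness of $T$ and skew-symmetry of $\operatorname{ad}(X)$ for $X\in\mathfrak{t}$) why the Harish-Chandra parameter takes values in $i\mathfrak{t}^*$, a fact the paper simply cites.
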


This follows from Harish-Chandra's classification of the discrete series via Harish-Chandra parameters (which determine the infinitesimal characters of the discrete series). See Remark~\ref{remark-appendix-harish-chandra-parameters} in the appendix.  

We shall use Theorem~\ref{thm-discrete-series-have-real-inf-ch} and the following well-known principle of Harish-Chandra and Langlands \cite{Langlands89} to say something about the infinitesimal character of a general irreducible tempered unitary representation.

\begin{theorem}[See for instance {\cite[Thm.~8.5.3]{KnappRepTheorySemisimpleGroups}}]
\label{thm-harish-chandra-principle-on-cuspidal-principal-series}
    Every irreducible tempered unitary representation of $G$ embeds into  some representation $\Ind_{P}^G \sigma \otimes \exp(i \nu)$, where $P=MAN$ is a parabolic subgroup of $G$, $\sigma $  is a discrete series representation of $M$, and $\nu\in \mathfrak{a}^*$.
\end{theorem}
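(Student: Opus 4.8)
The statement is Harish-Chandra's parametrization of the tempered dual by parabolic induction from the discrete series, and the plan is to prove it by induction on $\dim G$, ``de-inducing'' a general tempered $\pi$ one parabolic at a time until a discrete series appears. The base of the induction is immediate: if $\pi$ is itself a discrete series representation of $G$, take $P=G$ to be the improper parabolic subgroup, so that $M=G$, $A=N=\{e\}$ and $\nu=0$, and note that $\Ind_P^G(\sigma\otimes\exp(i\nu))=\pi$ with $\sigma=\pi$. So assume henceforth that $\pi$ is tempered and irreducible but not square-integrable.

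The inductive step rests on the following assertion, which I regard as the analytic heart of the matter and which I would quote from Harish-Chandra's work on the Plancherel formula (see e.g.\ \cite[Ch.~VIII]{KnappRepTheorySemisimpleGroups}): \emph{a tempered irreducible unitary representation of $G$ that is not a discrete series representation is a direct summand of $\Ind_Q^G(\tau\otimes\exp(i\mu))$ for some proper parabolic subgroup $Q=M_QA_QN_Q$, some tempered irreducible unitary representation $\tau$ of $M_Q$, and some $\mu\in\mathfrak a_Q^*$.} The mechanism is the examination of the leading exponents of the $K$-finite matrix coefficients of $\pi$ along $P_{\min}$: temperedness means that, after the appropriate shift by $\rho_{\min}$, all of these exponents lie in the closed negative dual cone, while square-integrability means they lie in the open one; so a tempered, non-square-integrable $\pi$ has a leading exponent on a wall, this wall distinguishes a proper standard parabolic $P_I$, and Harish-Chandra's (equivalently Casselman's) analysis of the asymptotics then realizes $\pi$ inside a representation parabolically induced from $P_I$, the inducing representation of the Levi inheriting the exponent bounds that keep it tempered. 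Because $\Ind_Q^G(\tau\otimes\exp(i\mu))$ is unitarily induced it decomposes as a finite orthogonal sum of irreducible unitary subrepresentations, so here ``direct summand'' and ``subrepresentation'' coincide, matching the wording of the theorem.

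Granting this, the induction closes by induction in stages. Since $Q$ is proper, $\dim M_Q<\dim G$ and $M_Q$ is again a real reductive group in Vogan's sense, so the inductive hypothesis applied to $M_Q$ and $\tau$ produces a parabolic subgroup $R'=MA'N'$ of $M_Q$, a discrete series representation $\sigma$ of $M$, and $\nu'\in(\mathfrak a')^*$ with $\tau$ a direct summand of $\Ind_{R'}^{M_Q}(\sigma\otimes\exp(i\nu'))$. Setting $R=R'\cdot A_QN_Q$, a parabolic subgroup of $G$ with Levi factor $M\cdot A'A_Q$, and using that unitary induction preserves direct summands and is compatible with composition, $\pi$ is a direct summand of
\[
\Ind_Q^G\bigl(\Ind_{R'}^{M_Q}(\sigma\otimes\exp(i\nu'))\otimes\exp(i\mu)\bigr)\;\cong\;\Ind_R^G\bigl(\sigma\otimes\exp(i(\nu'+\mu))\bigr),
\]
with $\nu'+\mu\in(\mathfrak a'\oplus\mathfrak a_Q)^{*}=\mathfrak a_R^{*}$, which is exactly the asserted form. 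The one genuinely hard input is the de-induction assertion of the previous paragraph, into which all of Harish-Chandra's asymptotic analysis of matrix coefficients is packed (one could instead route through Casselman's subrepresentation theorem together with the Knapp--Zuckerman classification of the tempered dual, but that merely repackages the same analysis); the remaining steps — the bookkeeping with induction in stages and the identification of summands with subrepresentations via unitarity — are routine.
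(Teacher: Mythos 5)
The paper does not prove this theorem: it is stated with a citation to Knapp's Theorem~8.5.3 (a foundational result of Harish-Chandra and Langlands) and used as an input to the paper's own arguments, so there is no internal proof to compare your attempt against.

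Taken on its own merits, your sketch is a correct outline of the standard argument. The de-induction assertion you isolate is indeed where all the analysis lives, and your passage from it to the theorem — induction on $\dim G$, induction in stages, and the identification of unitary direct summands with subrepresentations via finite length — is sound. Two observations. First, the standard treatments (Knapp's own proof, Trombi, Knapp--Zuckerman) choose the parabolic $Q$ directly from the leading exponents so that the inducing representation of $M_Q$ is already square-integrable rather than merely tempered; this eliminates the dimension induction entirely, and what you have written is a mild repackaging that deliberately asks for less from the asymptotic analysis at each stage. That is a legitimate variant, but it is worth being aware that the ``tempered inducing data'' version of the de-induction step is not quite how the cited sources phrase it, so you would need to say a word about how it follows from, say, the description of exponents of the inducing representation as the relevant truncation of the exponents of $\pi$. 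Second, your base case remark (``if $\pi$ is a discrete series, take $P=G$'') is not really the base of the induction on $\dim G$ — the genuine base is $G$ compact, where $\widehat G_{\mathrm{tempered}}=\widehat G$ consists of discrete series; the discrete-series branch is better viewed as a case split that occurs at every level. Neither point is a gap, merely a matter of exposition.
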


It follows from Theorem~\ref{thm-harish-chandra-principle-on-cuspidal-principal-series} that every irreducible tempered unitary representation has the same infinitesimal character as some cuspidal principal series representatation $\Ind_{P}^G \sigma \otimes \exp(i \nu)$.  But the infinitesimal character of the latter may be readily computed as follows:
\begin{equation}
    \label{eq-inf-ch-of-principal-series}
\left \{ 
\begin{aligned} 
\ImInfCh \bigl ( \Ind_P^G \sigma\otimes \exp(i \nu) \bigr ) & = i \nu \in   i \mathfrak{a}^*  ,
\\
\ReInfCh \bigl ( \Ind_P^G \sigma\otimes \exp(i \nu) \bigr ) & = \InfCh(\sigma)  \in i\mathfrak{t}^* .
\end{aligned}
\right .
\end{equation}
See for instance Knapp \cite[Prop.\,8.22]{KnappRepTheorySemisimpleGroups}. The infinitesimal character in \eqref{eq-inf-ch-of-principal-series} is computed  using a $\theta$-stable Cartan subalgebra of the form $\mathfrak{h} = \mathfrak{t}\oplus \mathfrak{a}$ that is constructed from the parabolic subgroup $P=MAN$ in Theorem~\ref{thm-harish-chandra-principle-on-cuspidal-principal-series} by defining $\mathfrak{t}$ to be  the Lie algebra   of a maximal torus in $K \cap M$ (which by another well-known result of Harish-Chandra is a Cartan subalgebra of $\mathfrak{m}$) and of course defining $\mathfrak{a}$ to be the Lie algebra of $A$.  The infinitesimal character of the discrete series representation $\sigma$ lies in $i\mathfrak{t}^*$  thanks to Theorem~\ref{thm-discrete-series-have-real-inf-ch}.

We want to use a single $\theta$-stable Cartan subalgebra of $\mathfrak{g}$ to describe the infinitesimal characters of \emph{all} the irreducible tempered unitary representations of $G$.  To this end, we shall use the minimal parabolic subgroup that we fixed in \eqref{eq-fixed-minimal-parabolic-subgroup}.
Form the ``minimally compact'' Cartan subalgebra
\begin{equation}
    \label{eq-minimally-compact-cartan}
\mathfrak{h}_{\min} = \mathfrak{t}_{\min} \oplus \mathfrak{a}_{\min} ,
\end{equation}
where $\mathfrak{t}_{\min}$ is the Lie algebra of a maximal torus in $M_{\min}$.
It follows from  computations with  Cayley transforms (see for instance \cite[Sec.\,VI.7]{KnappBeyond}) that there is an inner automorphism of the complex Lie algebra $\mathfrak{g}_\C$ that conjugates the Cartan subalgebra $\mathfrak{h}_\C$ in \eqref{eq-inf-ch-of-principal-series} to $\mathfrak{h}_{\min,\C}$, and conjugates $\mathfrak{a}$ into $\mathfrak{a}_{\min}$.  This inner automorphism is not unique, of course, but the possible images of $\nu$ lie within a single orbit of the restricted Weyl group:

\begin{lemma}[{See \cite[Lemma~3.4]{Vogan00} and \cite[Cor.\,VII.8.9]{HelgasonDS}}]
\label{lem-helgason}
Two elements of $\mathfrak{a}_{\min}$ are conjugate in $\mathfrak{g}_{\C}$ by an element of $\mathrm{Inn}(\mathfrak{g}_\C)$ if and only if they are conjugate by an element of $W(\mathfrak{g},\mathfrak{a}_{\min})$.
\end{lemma}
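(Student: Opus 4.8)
The forward implication is the easy one. The restricted Weyl group $W(\mathfrak g,\mathfrak a_{\min})$ is generated by the reflections $s_\alpha$ in the restricted roots $\alpha\in\Delta(\mathfrak g,\mathfrak a_{\min})$, and each $s_\alpha$ acts on $\mathfrak a_{\min}$ as the restriction of an inner automorphism of $\mathfrak g$ --- the usual $\mathfrak{sl}_2$-construction produces an element $\exp(\mathrm{ad}\,Z)$ with $Z\in\mathfrak k$ built from a root vector in $\mathfrak g_\alpha$ --- and an inner automorphism of $\mathfrak g$ is also an inner automorphism of $\mathfrak g_\C$. So $W(\mathfrak g,\mathfrak a_{\min})$-conjugate elements are $\mathrm{Inn}(\mathfrak g_\C)$-conjugate. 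For the converse, suppose $H_1,H_2\in\mathfrak a_{\min}$ and $\Ad(g)H_1=H_2$ for some $g\in\mathrm{Inn}(\mathfrak g_\C)$. Since it suffices to exhibit a single element of $W(\mathfrak g,\mathfrak a_{\min})$ carrying $H_1$ to $H_2$, I may act on $H_1$ and on $H_2$ separately by elements of $W(\mathfrak g,\mathfrak a_{\min})$, and so may assume that both $H_1$ and $H_2$ lie in the closed positive Weyl chamber $\overline{\mathfrak a_{\min}^+}$ of the restricted root system determined by \eqref{eq-fixed-system-of-positive-roots}. The goal becomes to prove $H_1=H_2$.

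To this end I would pass to a complex Cartan subalgebra. Take the maximally split $\theta$-stable Cartan subalgebra $\mathfrak h_{\min}=\mathfrak t_{\min}\oplus\mathfrak a_{\min}$ of \eqref{eq-minimally-compact-cartan}; then $\mathfrak h_{\min,\C}$ is a Cartan subalgebra of $\mathfrak g_\C$ and $\mathfrak a_{\min}\subseteq\Re(\mathfrak h_{\min,\C})=i\mathfrak t_{\min}\oplus\mathfrak a_{\min}$. The elements $H_1,H_2$ are then two elements of the \emph{single} Cartan subalgebra $\mathfrak h_{\min,\C}$ that are conjugate under $\mathrm{Inn}(\mathfrak g_\C)$, so they are conjugate under the complex Weyl group $W(\mathfrak g_\C,\mathfrak h_{\min,\C})$. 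Indeed $H_2$ is semisimple, so its centralizer $\mathfrak z_{\mathfrak g_\C}(H_2)$ is reductive, and both $\mathfrak h_{\min,\C}$ and $\Ad(g)\mathfrak h_{\min,\C}$ are Cartan subalgebras of it; some inner automorphism of $\mathfrak z_{\mathfrak g_\C}(H_2)$ --- which automatically fixes $H_2$ --- carries $\Ad(g)\mathfrak h_{\min,\C}$ back to $\mathfrak h_{\min,\C}$, and composing it with $\Ad(g)$ yields $\tilde w\in W(\mathfrak g_\C,\mathfrak h_{\min,\C})$ with $\tilde w H_1=H_2$.

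It remains to see that two elements of $\overline{\mathfrak a_{\min}^+}$ that are $W(\mathfrak g_\C,\mathfrak h_{\min,\C})$-conjugate must coincide. By Lemma~\ref{lem-canonical-real-part-of-h-mod-w}, $W(\mathfrak g_\C,\mathfrak h_{\min,\C})$ preserves the real vector space $V:=\Re(\mathfrak h_{\min,\C})$, on which every root $\beta$ of $(\mathfrak g_\C,\mathfrak h_{\min,\C})$ is real-valued (elements of $V$ have real $\mathrm{ad}$-eigenvalues, cf.\ Definition~\ref{def-real-and-imaginary-weights}); thus $W(\mathfrak g_\C,\mathfrak h_{\min,\C})$ acts on the Euclidean space $V$ as the finite reflection group generated by the reflections in the hyperplanes $\ker\beta\cap V$. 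An imaginary root vanishes on $\mathfrak a_{\min}$, whereas the restriction to $\mathfrak a_{\min}$ of a non-imaginary root is a restricted root, and --- because $\mathfrak a_{\min}$ is maximal abelian in $\mathfrak s$ --- every restricted root arises in this way. Hence I can choose the positive system for $(\mathfrak g_\C,\mathfrak h_{\min,\C})$, by a lexicographic recipe (order first by a regular element of the fixed chamber $\mathfrak a_{\min}^+$, then break the remaining ties among imaginary roots by a generic element of $i\mathfrak t_{\min}$), so that every positive non-imaginary root restricts to a positive restricted root. With that choice, a point of $\overline{\mathfrak a_{\min}^+}$ is nonnegative on every positive non-imaginary root and zero on every imaginary root, so $\overline{\mathfrak a_{\min}^+}$ lies inside the closed fundamental chamber $\overline{C^+}$ of $W(\mathfrak g_\C,\mathfrak h_{\min,\C})$ in $V$. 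Since an orbit of a finite reflection group meets its closed fundamental chamber in exactly one point, and $H_1,H_2\in\overline{\mathfrak a_{\min}^+}\subseteq\overline{C^+}$ are $W(\mathfrak g_\C,\mathfrak h_{\min,\C})$-conjugate, it follows that $H_1=H_2$.

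The one genuinely technical point is the bookkeeping in the last paragraph: arranging compatible positive systems for the complex root system and the restricted root system so that the restricted Weyl chamber is contained in a complex Weyl chamber. Everything else is either standard structure theory (conjugacy of Cartan subalgebras of a complex reductive Lie algebra, and hence Weyl-conjugacy of inner-conjugate elements of a fixed one) or has already been recorded in the text (Lemma~\ref{lem-canonical-real-part-of-h-mod-w}, and the reality of $\mathrm{ad}$-eigenvalues on $\Re(\mathfrak h_{\min,\C})$ from Definition~\ref{def-real-and-imaginary-weights}).
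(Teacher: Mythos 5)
The paper does not prove this lemma; it cites \cite[Lemma~3.4]{Vogan00} and \cite[Cor.~VII.8.9]{HelgasonDS}, so there is no in-house argument to compare against. Your proof is correct and self-contained. The forward direction lifts restricted Weyl group elements to representatives in $N_K(\mathfrak a_{\min})$ and hence to inner automorphisms of $\mathfrak g_\C$; the converse combines (i) the standard fact that two elements of a fixed Cartan subalgebra of a complex reductive Lie algebra which are $\mathrm{Inn}$-conjugate are already conjugate under the complex Weyl group (your reductive-centralizer argument is the usual proof), with (ii) a compatible-positive-systems argument placing the closed restricted dominant chamber $\overline{\mathfrak a_{\min}^+}$ inside a closed fundamental chamber for $W(\mathfrak g_\C,\mathfrak h_{\min,\C})$ acting on $\Re(\mathfrak h_{\min,\C})$, so that two dominant elements in one complex Weyl orbit coincide. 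All steps check out, including the key verification that every positive non-imaginary root of $(\mathfrak g_\C,\mathfrak h_{\min,\C})$ restricts to a positive restricted root once the positive system is fixed lexicographically by a regular dominant element of $\mathfrak a_{\min}$.

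One terminological caution: your use of ``imaginary root'' is the classical $\theta$-stable-Cartan notion ($\theta\beta=\beta$, equivalently $\beta|_{\mathfrak a_{\min}}=0$), \emph{not} the paper's Definition~\ref{def-real-and-imaginary-weights} notion of an imaginary weight. In the sense of that definition, \emph{every} root of $(\mathfrak g_\C,\mathfrak h_{\min,\C})$ is a \emph{real} weight, since all roots take real values on $\Re(\mathfrak h_{\min,\C})=i\mathfrak t_{\min}\oplus\mathfrak a_{\min}$ --- a fact you yourself invoke. The argument is unaffected, but the wording clashes with conventions already established in Section~2 and should be flagged or rephrased (e.g., ``compact root'' or ``root vanishing on $\mathfrak a_{\min}$'') to avoid confusion.
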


We arrive at the following result:

\begin{theorem}[{\cite[Cor.~3.5(1)]{Vogan00}}]
\label{thm-vogan-and-maybe-knapp}
The imaginary part of the infinitesimal character of any irreducible tempered unitary representation $\pi$ may be written in the following form, using the  $\theta$-stable Cartan subalgebra $\mathfrak{h}_{\min}{=}\mathfrak{t}_{\min}\oplus \mathfrak{a}_{\min}$:
\[
\ImInfCh(\pi) = [(0,\nu)]\in \bigl ( \mathfrak{t}_{\min}^* \oplus i\mathfrak{a}_{\min}^*\bigr ) /  W(\mathfrak{g}_{\C}, \mathfrak{h}_{\min,\C}).
\]
The $W(\mathfrak{g}, \mathfrak{a}_{\min})$-orbit of $\nu$ in $i \mathfrak{a}_{\min}^*$ is uniquely determined by $\ImInfCh(\pi)$.
\end{theorem}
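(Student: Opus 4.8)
The plan is to use Theorem~\ref{thm-harish-chandra-principle-on-cuspidal-principal-series} to reduce $\pi$ to a cuspidal principal series, read off its imaginary infinitesimal character from \eqref{eq-inf-ch-of-principal-series}, transport the answer to the fixed Cartan subalgebra $\mathfrak h_{\min}$ by an explicit Cayley--transform inner automorphism, and control the remaining ambiguity with Lemma~\ref{lem-helgason}. First I would embed $\pi$ into some $\Ind_P^G \sigma\otimes\exp(i\nu)$, as permitted by Theorem~\ref{thm-harish-chandra-principle-on-cuspidal-principal-series}. After conjugating $P$ by an element of $G$ --- an inner automorphism, which changes no infinitesimal character --- we may assume $P = P_I = M_I A_I N_I$ is standard, so that $\sigma$ is a discrete series of $M_I$ and $\nu\in\mathfrak a_I^*\subseteq\mathfrak a_{\min}^*$. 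Since $\pi$ and $\Ind_{P_I}^G\sigma\otimes\exp(i\nu)$ have the same infinitesimal character they have the same imaginary part, so by \eqref{eq-inf-ch-of-principal-series} --- computed with the $\theta$-stable Cartan subalgebra $\mathfrak h = \mathfrak t\oplus\mathfrak a_I$, where $\mathfrak t$ is the Lie algebra of a maximal torus of $K\cap M_I$ --- we have $\ImInfCh(\pi) = \xi_{\mathfrak h, i\nu}$ with $i\nu\in i\mathfrak a_I^*\subseteq\mathfrak h_\C^*$; in particular $i\nu$ vanishes on $\mathfrak t_\C$.

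Next I would transport this to $\mathfrak h_{\min}$. Both $\mathfrak h$ and $\mathfrak h_{\min}$ contain $\mathfrak a_I$, and each is $\mathfrak a_I$ plus a $\theta$-stable Cartan subalgebra of the Levi factor $\mathfrak m_I$ --- namely $\mathfrak t$ in the first case, and $\mathfrak t_{\min}\oplus(\mathfrak a_{\min}\cap\mathfrak m_I)$ in the second. All Cartan subalgebras of the reductive complex Lie algebra $\mathfrak m_{I,\C}$ are conjugate under $\mathrm{Inn}(\mathfrak m_{I,\C})$, and each such conjugation is the restriction of an inner automorphism of $\mathfrak g_\C$ that fixes $\mathfrak a_{I,\C}$ pointwise (because $\mathfrak m_I$ and $\mathfrak a_I$ commute); hence there is $\psi\in\mathrm{Inn}(\mathfrak g_\C)$ with $\psi(\mathfrak h_\C) = \mathfrak h_{\min,\C}$, with $\psi$ the identity on $\mathfrak a_{I,\C}$, and with $\psi(\mathfrak t_\C) = \mathfrak t_{\min,\C}\oplus(\mathfrak a_{\min}\cap\mathfrak m_I)_\C$. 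This is a concrete form of the Cayley--transform automorphism mentioned just before Lemma~\ref{lem-helgason}. Under the induced identification of the two Harish--Chandra parametrizations, the weight $i\nu$ on $\mathfrak h_\C$ goes to the weight $i\nu\circ\psi^{-1}$ on $\mathfrak h_{\min,\C}$, which vanishes on $\psi(\mathfrak t_\C) = \mathfrak t_{\min,\C}\oplus(\mathfrak a_{\min}\cap\mathfrak m_I)_\C$ --- in particular on $\mathfrak t_{\min,\C}$ --- and agrees with $i\nu$ on $\mathfrak a_{I,\C}$. In the coordinates $\mathfrak h_{\min,\C} = \mathfrak t_{\min,\C}\oplus\mathfrak a_{\min,\C}$ it is therefore of the form $(0,\nu')$ with $\nu' = i\nu\in i\mathfrak a_I^*\subseteq i\mathfrak a_{\min}^*$, so $\ImInfCh(\pi) = [(0,\nu')]$. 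This is the first assertion (the ``$\nu$'' of the statement being this $\nu'$).

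For the uniqueness statement I would show that the natural map $i\mathfrak a_{\min}^*/W(\mathfrak g,\mathfrak a_{\min}) \to (\mathfrak t_{\min}^*\oplus i\mathfrak a_{\min}^*)/W(\mathfrak g_\C,\mathfrak h_{\min,\C})$, $[\nu']\mapsto[(0,\nu')]$, is injective. Using the invariant form $B$ to identify $\mathfrak h_{\min,\C}^*$ with $\mathfrak h_{\min,\C}$ in an $\mathrm{Inn}(\mathfrak g_\C)$-equivariant way, and using that $\mathfrak t_{\min}$ and $\mathfrak a_{\min}$ are $B$-orthogonal (one lies in $\mathfrak k$, the other in $\mathfrak s$), a weight of the form $(0,\nu')$ corresponds to a vector of $i\mathfrak a_{\min}$. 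If $(0,\nu'_1)$ and $(0,\nu'_2)$ are conjugate under $W(\mathfrak g_\C,\mathfrak h_{\min,\C})$ --- which is realized by inner automorphisms of $\mathfrak g_\C$ that stabilize $\mathfrak h_{\min,\C}$ --- then the corresponding vectors of $i\mathfrak a_{\min}$ are conjugate under $\mathrm{Inn}(\mathfrak g_\C)$, hence, by Lemma~\ref{lem-helgason} (applied after scaling by $1/i$), conjugate under $W(\mathfrak g,\mathfrak a_{\min})$; so $\nu'_1$ and $\nu'_2$ lie in a single $W(\mathfrak g,\mathfrak a_{\min})$-orbit. Since $\ImInfCh(\pi)$ determines a single class in the target quotient and $(0,\nu')$ represents it, the $W(\mathfrak g,\mathfrak a_{\min})$-orbit of $\nu'$ is uniquely determined by $\ImInfCh(\pi)$.

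The step I expect to be the main obstacle is the transport in the second paragraph: the Cayley--transform automorphism must be pinned down carefully enough --- in particular, so that it is the identity on $\mathfrak a_{I,\C}$ --- to guarantee that the imaginary infinitesimal character lands in $i\mathfrak a_{\min}^*$ with no $\mathfrak t_{\min}$-component, rather than merely somewhere in $\mathfrak t_{\min}^*\oplus i\mathfrak a_{\min}^*$. The other delicate point, in the uniqueness argument, is the comparison of conjugacy under $\mathrm{Inn}(\mathfrak g_\C)$, under $W(\mathfrak g_\C,\mathfrak h_{\min,\C})$, and under the restricted Weyl group $W(\mathfrak g,\mathfrak a_{\min})$ --- precisely the content of Lemma~\ref{lem-helgason}.
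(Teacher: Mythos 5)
Your argument is correct and follows essentially the same route as the paper's own discussion preceding the theorem: embed $\pi$ in a cuspidal principal series, read off $\ImInfCh$ from \eqref{eq-inf-ch-of-principal-series}, transport to $\mathfrak h_{\min}$ by an inner automorphism (the paper invokes Cayley transforms from \cite[Sec.\,VI.7]{KnappBeyond}, you realize the same thing by first conjugating $P$ to a standard $P_I$ and then using conjugacy of Cartan subalgebras inside $\mathfrak m_{I,\CC}$ by an automorphism fixing $\mathfrak a_{I,\CC}$ pointwise), and control the ambiguity by Lemma~\ref{lem-helgason}. One small point worth tightening: the conjugating element taking $P$ to $P_I$ should be taken in $K$ (or adjusted by an element of $N_I$) rather than an arbitrary element of $G$, to ensure that the Langlands pieces $M$ and $A$ are carried onto $M_I$ and $A_I$ and hence that $\nu$ really ends up in $\mathfrak a_I^*$.
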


From Lemma~\ref{lem-helgason}, the inclusion of $i \mathfrak{a}_{\min}^*$ into $ \Im(\mathfrak{h}_{\min,\C})^*$ determines a continuous map 
\[ 
i \mathfrak{a}_{\min}^*/W(\mathfrak{g}, \mathfrak{a}_{\min}) 
\longrightarrow 
\Im(\mathfrak{h}_{\min,\C})^* / W(\mathfrak{g}_\C,\mathfrak{h}_{\min,\C})
\]
that is a homeomorphism onto its image.  From Theorem~\ref{thm-vogan-and-maybe-knapp} there is a diagram of  continuous maps 
\begin{equation}
    \label{eq-im-inf-ch-map}
\xymatrix@C=50pt{
 \widehat G_{\mathrm{tempered}}\ar@{=}[d] \ar[r]^-{\ImInfCh }&  \Im(\mathfrak{h}_{\min,\C})^* / W(\mathfrak{g}_\C,\mathfrak{h}_{\min,\C}) 
 \\
  \widehat G_{\mathrm{tempered}} \ar[r]_-{\ImInfCh } & i \mathfrak{a}_{\min}^*/W(\mathfrak{g}, \mathfrak{a}_{\min}) .\ar[u]
}
\end{equation}
We have labeled the bottom map, like the top map, as the imaginary part of the the infinitesimal character, and we shall use the bottom version of the imaginary part of the the infinitesimal character from now on.

\subsection{Stratification of the space of imaginary values of the infinitesimal character}

Recall that we fixed a system \eqref{eq-fixed-system-of-positive-roots} of positive  restricted roots 
and that we are denoting by 
$
S  \subseteq \Delta^{+} (\mathfrak{g} ,\mathfrak{a}_{\min})$ 
the subset of positive (reduced) roots. 

\begin{definition} We shall write 
\[
\mathfrak{a}_{\dom}^* = \{\, \nu\in \mathfrak{a}_{\min}^* : \langle \nu, \alpha \rangle \ge 0\,\, \forall \alpha \in S\,\} .
\]
This is the dominant Weyl chamber in $\mathfrak{a}_{\min}^*$ that is associated to the given choice of system of positive roots.

\end{definition}
The natural map
\[
\mathfrak{a}_{\dom}^*\stackrel \cong \longrightarrow  \mathfrak{a}_{\min}^* / W(\mathfrak{g},\mathfrak{a}_{\min})
\]
induced from the inclusion of $\mathfrak{a}_{\dom}^*$ into $\mathfrak{a}_{\min}^*$ is a homeomorphism, and from now on we shall use this homeomorphism to identify imaginary parts of infinitesimal characters with points in $\mathfrak{a}_{\dom}^*$.

\begin{definition}
\label{def-a-i-plus}
For every $I \subseteq S$ define 
\[
\mathfrak{a}^*_{I,+} 
     = \bigl\{ \, \nu\in \mathfrak{a}_{\dom}^* : \langle \nu,\alpha\rangle = 0 \,\, \forall \alpha \in I
\,\, \& \,\,\langle \nu,\beta\rangle \ne 0 \,\, \forall \beta \notin I \,\bigr\} .
\]
\end{definition} 

Note that $\lie{a}_{I,+}^* \subseteq \lie{a}_I^*$ as in \eqref{eq-characterization-of-a-i-star}.  Explicitly, we have
\begin{equation}
    \label{eq-fmla-for-a-i-plus-star}
 \lie{a}_{I,+}^* = \mathfrak{a}^*_{\dom}\cap \lie{a}_I^* \setminus \bigcup_{I\subsetneqq J} \lie{a}_J^*.
\end{equation}
The sets $\lie{a}_{I,+}^*$  are locally closed subsets of $\mathfrak{a}_{\dom}^*$, and   
the full chamber  $\mathfrak{a}_{\dom}^*$ is the disjoint union of all of them:
\begin{equation}
    \label{eq-partition-of-space-of-im-inf-chs}
\mathfrak{a}_{\dom}^* = \bigsqcup _{I \subseteq S} \mathfrak{a}^*_{I,+}.
\end{equation}
At one extreme, when $I=\emptyset$, $\mathfrak{a}_{I,+}^*$  is the interior of the dominant chamber; at the other, when  $I{=}S$, $\mathfrak{a}_{I,+}^*$  is  the   intersection of $\mathfrak{a}_{\min}$ with the center of $\mathfrak{g} $ (and so for instance it is a point if the center of $G$ is compact).  

\subsection{Classification of representations via the imaginary part of the infinitesimal character}

In this section we shall describe Vogan's characterization of the fibers of the map $\ImInfCh$ in \eqref{eq-im-inf-ch-map} over the various components in \eqref{eq-partition-of-space-of-im-inf-chs}.

\begin{definition}
\label{def-real-infinitesimal-character}
Let $G$ be a real reductive group. 
\begin{enumerate}[\rm (i)]
\item 
An irreducible unitary representation  $\pi$ of a real reductive group is said to have \emph{real infinitesimal character} if
\[
\ImInfCh (\pi) =0\in \mathfrak{a}_{\dom}^* .
\]

\item An irreducible, tempered unitary representation of $G$ is said to be  \emph{tempiric} (terminology of Afgoustidis) if it has  real infinitesimal character.  We shall denote by $\widehat G_{\mathrm{tempiric}}$ the set of unitary equivalence classes of  tempiric representations of $G$.

\end{enumerate}
\end{definition}

\begin{theorem}[{\cite[Cor.~3.5(2)]{Vogan00}}]
\label{thm-vogan-characterization-of-unitary-reps-with-fixed-im-inf-ch}
Let $I$ be a subset of the set of simple roots $S\subseteq \Delta(\mathfrak{g},\mathfrak{a}_{\min})$,  let  $\nu\in  \mathfrak{a}_{I,+}^*$, and denote by $P_I$ the associated standard parabolic subgroup of $G$, as in \eqref{eq-fixed-minimal-parabolic-subgroup}. The correspondence 
\[
\sigma \longmapsto \Ind_{P_I}^G \sigma {\otimes} \exp(i \nu) 
\]
determines a bijection from the set of unitary  equivalence classes of tempiric representations $\sigma$ of $M_I$  to the set of unitary  equivalence classes of irreducible tempered unitary representations $\pi$ of $G$ for which 
$\ImInfCh(\pi) = \nu$.
\end{theorem}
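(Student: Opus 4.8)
The statement is \cite[Cor.~3.5(2)]{Vogan00}, and the following is a plan for how the ingredients assembled above combine to prove it. There are three points to check: that $\sigma\mapsto\Ind_{P_I}^G\sigma\otimes\exp(i\nu)$ takes values in the asserted set, that it is surjective, and that it is injective.

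\emph{Well-definedness.} Let $\sigma$ be a tempiric representation of $M_I$. Real parabolic induction preserves temperedness and unitarity, and $\exp(i\nu)$ is a unitary character of $A_I$ because $\nu\in\mathfrak{a}_I^*$, so $\Ind_{P_I}^G\sigma\otimes\exp(i\nu)$ is tempered and unitary. To compute its imaginary infinitesimal character, apply Theorem~\ref{thm-harish-chandra-principle-on-cuspidal-principal-series} \emph{to the group $M_I$}: since $\ImInfCh(\sigma)=0$, the representation $\sigma$ embeds in some $\Ind_{Q'}^{M_I}\sigma'\otimes\exp(i\nu')$ with $Q'=M'A'N'$ a parabolic subgroup of $M_I$, $\sigma'$ a discrete series of $M'$, and $\nu'$ in the dual of the Lie algebra of $A'$; the analogue of \eqref{eq-inf-ch-of-principal-series} for $M_I$, together with Lemma~\ref{lem-helgason} for $M_I$, forces $\nu'=0$. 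Induction in stages then embeds $\Ind_{P_I}^G\sigma\otimes\exp(i\nu)$ into $\Ind_Q^G\sigma'\otimes\exp(i\nu)$, where $Q$ is the cuspidal parabolic subgroup of $G$ whose split Cartan is the direct sum of $\mathfrak{a}_I$ with the Lie algebra of $A'$, and whose continuous parameter, read off in $\mathfrak{a}_{\min}^*$, is exactly $\nu$ (because $\nu$ vanishes on the Lie algebra of $A'$). Now \eqref{eq-inf-ch-of-principal-series}, Theorem~\ref{thm-vogan-and-maybe-knapp}, and the identification of $\mathfrak{a}_{\dom}^*$ with $\mathfrak{a}_{\min}^*/W(\mathfrak{g},\mathfrak{a}_{\min})$ give $\ImInfCh(\Ind_{P_I}^G\sigma\otimes\exp(i\nu))=\nu$, since $\nu\in\mathfrak{a}_{I,+}^*\subseteq\mathfrak{a}_{\dom}^*$ is its own dominant representative. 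Finally, by Definition~\ref{def-a-i-plus} and \eqref{eq-characterization-of-a-i-star}, the condition $\nu\in\mathfrak{a}_{I,+}^*$ says precisely that $\langle\nu,\beta\rangle\ne 0$ for every $\beta\in\Delta(\mathfrak{g},\mathfrak{a}_I)$, i.e.\ that $\nu$ is regular in $\mathfrak{a}_I^*$; by the Harish-Chandra--Knapp--Stein theory of $R$-groups and intertwining operators for parabolic induction of tempered representations \cite[Ch.~14]{KnappRepTheorySemisimpleGroups}, the commuting algebra of $\Ind_{P_I}^G\sigma\otimes\exp(i\nu)$ is governed by the stabilizer of the pair $(\sigma,\nu)$ in $W(\mathfrak{g},\mathfrak{a}_I)$, which is trivial by regularity of $\nu$. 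Hence $\Ind_{P_I}^G\sigma\otimes\exp(i\nu)$ is irreducible and lies in the asserted set.

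\emph{Surjectivity.} Let $\pi$ be irreducible, tempered and unitary with $\ImInfCh(\pi)=\nu$. By Theorem~\ref{thm-harish-chandra-principle-on-cuspidal-principal-series}, $\pi$ embeds in $\Ind_P^G\tau\otimes\exp(i\mu)$ with $P=MAN$ cuspidal, $\tau$ a discrete series of $M$, and $\mu\in\mathfrak{a}^*$. After conjugating we may assume $P=P_J$ is standard; and since, for tempered inducing data, the standard intertwining operators among the representations induced from $P_J$ and its $W(\mathfrak{g},\mathfrak{a}_J)$-conjugates are isomorphisms (Harish-Chandra), we may replace $(\tau,\mu)$ by a $W(\mathfrak{g},\mathfrak{a}_J)$-conjugate in which $\mu$ is dominant; by \eqref{eq-inf-ch-of-principal-series} and the identification above, this dominant $\mu$ equals $\nu$. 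Then $\nu\in\mathfrak{a}_J^*$ gives $\langle\nu,\alpha\rangle=0$ for all $\alpha\in J$, so $J\subseteq I$ by Definition~\ref{def-a-i-plus}, and $P_J\subseteq P_I$. Induction in stages now yields
\[
\Ind_{P_J}^G\tau\otimes\exp(i\nu)=\Ind_{P_I}^G\bigl(\Ind_{P_J\cap M_I}^{M_I}\tau\bigr)\otimes\exp(i\nu),
\]
since under $\mathfrak{a}_J=\mathfrak{a}_I\oplus(\mathfrak{a}_J\cap\mathfrak{m}_I)$ the parameter $\nu\in\mathfrak{a}_I^*$ restricts to $\nu$ on $\mathfrak{a}_I$ and to $0$ on $\mathfrak{a}_J\cap\mathfrak{m}_I$. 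The representation $\Ind_{P_J\cap M_I}^{M_I}\tau$ of $M_I$ is induced from a discrete series with vanishing continuous parameter, hence is tempered and unitary, hence a finite direct sum $\bigoplus_k\sigma_k$ of irreducible tempered unitary representations of $M_I$, each with $\ImInfCh(\sigma_k)=0$ by \eqref{eq-inf-ch-of-principal-series} for $M_I$ --- that is, each $\sigma_k$ is tempiric for $M_I$. So $\pi$ embeds in $\bigoplus_k\Ind_{P_I}^G\sigma_k\otimes\exp(i\nu)$; each summand is irreducible by the previous paragraph, and therefore $\pi\cong\Ind_{P_I}^G\sigma_{k_0}\otimes\exp(i\nu)$ for some $k_0$.

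\emph{Injectivity.} Two representations $\Ind_{P_I}^G\sigma_1\otimes\exp(i\nu)$ and $\Ind_{P_I}^G\sigma_2\otimes\exp(i\nu)$ induced from tempiric $\sigma_1,\sigma_2$ of $M_I$ are disjoint --- in particular non-isomorphic --- unless the inducing data $(\sigma_1,\nu)$ and $(\sigma_2,\nu)$ are conjugate under $W(\mathfrak{g},\mathfrak{a}_I)$; and any $w\in W(\mathfrak{g},\mathfrak{a}_I)$ with $w\nu=\nu$ is trivial, $\nu$ being regular, so conjugacy of the data forces $\sigma_1\cong\sigma_2$. (Consistently, equality of the real parts of the infinitesimal characters already yields $\InfCh(\sigma_1)=\InfCh(\sigma_2)$, by \eqref{eq-inf-ch-of-principal-series}.) The genuine content of the theorem is concentrated in two appeals to the Harish-Chandra--Knapp--Stein Plancherel-theoretic analysis of the tempered dual: the triviality of the $R$-group at the regular parameter $\nu$ --- used for the irreducibility in the first step and for injectivity here, and requiring the form of $R$-group theory valid for inducing data that are merely tempered rather than discrete series --- and, in the surjectivity step, the normalization that places the cuspidal parabolic of Theorem~\ref{thm-harish-chandra-principle-on-cuspidal-principal-series} in standard position $P_J$ with $J\subseteq I$ while forcing the continuous parameter to be exactly $\nu$. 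Both may be extracted from \cite[Sec.~3]{Vogan00}.
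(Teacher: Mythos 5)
Your proof is correct and follows essentially the same three-step architecture as the paper's: verify $\ImInfCh=\nu$ and irreducibility, then surjectivity via embedding into a cuspidal principal series and conjugating to dominant position, then injectivity via disjointness of induced representations. The surjectivity arguments are nearly identical.

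The one place where you take a genuinely different route is the key structural input for irreducibility and injectivity. You invoke the Knapp--Stein $R$-group formalism, observing (correctly) that regularity of $\nu$ in $\mathfrak{a}_I^*$ makes the $R$-group trivial, and you flag at the end that you need ``the form of $R$-group theory valid for inducing data that are merely tempered rather than discrete series.'' That caveat is exactly where the subtlety lives: the $R$-group theory of \cite[Ch.~14]{KnappRepTheorySemisimpleGroups} is set up for discrete series on cuspidal parabolics, not for arbitrary tempered (or tempiric) inducing data on arbitrary Levi factors. If you try to reduce to the cuspidal case by the embedding you use for the infinitesimal character computation, the continuous parameter on the enlarged split torus $\mathfrak{a}_I\oplus\mathfrak{a}'$ is $\nu$ extended by zero, which is no longer regular there, so the $R$-group of the enlarged induced representation need not be trivial and one must then track which constituent corresponds to $\Ind_{P_I}^G\sigma\otimes\exp(i\nu)$. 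The paper sidesteps this by invoking Bruhat's theorem in the form developed by Harish-Chandra (stated as Theorem~\ref{thm-harish-chandra-irreducibility}, with references to \cite{Bruhat}, \cite{KolkVaradarajan96}, \cite{VanDenBanSchlichtkrull05}): under the hypothesis that $L=MA$ is the full centralizer of $\nu$ and $\sigma$ is irreducible unitary with real infinitesimal character, $\Ind_P^G\sigma\otimes\exp(i\nu)$ is irreducible and the unitary class of the induced representation determines that of $\sigma$. This is applied directly, in one shot, without passing through discrete series or the $R$-group machinery. Your approach buys familiarity (the $R$-group language is standard) at the cost of needing a version of that theory beyond where it is usually stated; the paper's approach buys directness by using the Bruhat--Harish-Chandra result whose hypotheses match exactly what is at hand. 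On the other hand, your well-definedness step -- embedding $\sigma$ into a cuspidal principal series of $M_I$ via Theorem~\ref{thm-harish-chandra-principle-on-cuspidal-principal-series} and Lemma~\ref{lem-helgason}, then inducing in stages -- is more explicit and careful than the paper's terse appeal to \eqref{eq-inf-ch-of-principal-series}, which as stated covers only discrete series inducing data.
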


\begin{remark}
Actually, Vogan  considered general unitary representations in \cite{Vogan00}, rather than tempered representations, and for these the result is much more substantial.  Vogan credits the characterization  in this case to Knapp \cite[Thm.~16.10]{KnappRepTheorySemisimpleGroups}.  The proof of the tempered case that is stated above as Theorem~\ref{thm-vogan-characterization-of-unitary-reps-with-fixed-im-inf-ch} is relatively straightforward; see Section~\ref{sec-proof-of-classification}. 
\end{remark}

Theorem~\ref{thm-vogan-characterization-of-unitary-reps-with-fixed-im-inf-ch} determines a bijection
\begin{equation}
    \label{eq-description-of-tempered-dual-via-im-inf-ch}
    \widehat G_{\mathrm{tempered}} \cong  \bigsqcup _{I\subseteq S}
    \widehat M_{I,\mathrm{tempiric}} \times i \mathfrak{a}^*_{I,+}.
\end{equation}
This will be  central to what follows.

\section{A decomposition of the reduced group C*-algebra via the imaginary part of the infinitesimal character}
\label{sec-reduced-c-star-algebra-and-im-inf-ch}

The purpose of this section is to obtain from the decomposition of the tempered dual in \eqref{eq-description-of-tempered-dual-via-im-inf-ch} above a filtration of the reduced $C^*$-algebra of the real reductive group $G$ by ideals whose subquotients correspond to the individual parts in \eqref{eq-description-of-tempered-dual-via-im-inf-ch}, indexed by $I\subseteq S$.

\subsection{The reduced group C*-algebra and the infinitesimal character}

Let $G$ be a real reductive group and denote by $C^*_r (G)$ the reduced group $C^*$-algebra of $G$. In this section we shall use the theory of the imaginary part of the infinitesimal character that we reviewed in the last section to fiber the spectrum $\widehat{G}_{\temp}$ of $C_r^*(G)$ over $i\lie{a}_{\dom}^*$. In order to do this, we must prove that the imaginary part of the infinitesimal character defines a continuous map.

\begin{theorem}
\label{thm-im-inf-ch-for-the-reduced-c-star-algebra}
    The map
    \[\ImInfCh: \widehat{G}_{\temp} \to i\lie{a}^*_{\dom}\]
    is continuous.
\end{theorem}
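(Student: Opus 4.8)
The plan is to reduce the continuity statement to a statement about the topology on the tempered dual that is encoded by the bijection \eqref{eq-description-of-tempered-dual-via-im-inf-ch}. Recall that $\widehat G_{\temp}$, as the spectrum of the $C^*$-algebra $C^*_r(G)$, carries the Jacobson (hull-kernel) topology, and by the work of Harish-Chandra and the Plancherel theorem this topology is well understood: a net $\pi_j \to \pi$ in $\widehat G_{\temp}$ has a description in terms of the parabolic induction data $(P_I, \sigma, \nu)$. Concretely, for a \emph{fixed} parabolic $P = MAN$ and a fixed discrete series $\sigma$ of $M$, the map $\nu \mapsto \Ind_P^G \sigma \otimes \exp(i\nu)$ from $\mathfrak{a}^*$ into $\widehat G_{\temp}$ is continuous (indeed the relevant field of representations is continuous, and the decomposition of $\Ind_P^G \sigma\otimes\exp(i\nu)$ into irreducibles is controlled by the $R$-group, which is locally constant in $\nu$ away from a hyperplane arrangement, and upper semicontinuous everywhere). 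So the first step is to record this standard continuity of parabolic induction in the variable $\nu$, citing e.g. \cite{KnappRepTheorySemisimpleGroups}.

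Next I would combine this with the formula \eqref{eq-inf-ch-of-principal-series}, which says that on the stratum attached to $(P_I,\sigma,\nu)$ the imaginary part of the infinitesimal character is exactly $i\nu$ (transported to $i\mathfrak{a}_{\min}^*/W$, equivalently to $i\mathfrak{a}_{\dom}^*$, via Theorem~\ref{thm-vogan-and-maybe-knapp} and the identification $\mathfrak{a}_{\dom}^* \cong \mathfrak{a}_{\min}^*/W(\mathfrak{g},\mathfrak{a}_{\min})$). In other words, $\ImInfCh$ is the composite of the (continuous, $W$-equivariant) Harish-Chandra parameter map $\widehat G_{\temp} \to i\mathfrak{a}_{\min}^*/W(\mathfrak{g},\mathfrak{a}_{\min})$ coming from the cuspidal data, followed by the homeomorphism $i\mathfrak{a}_{\min}^*/W(\mathfrak{g},\mathfrak{a}_{\min}) \cong i\mathfrak{a}_{\dom}^*$. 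So continuity of $\ImInfCh$ is equivalent to continuity of the map sending a tempered irreducible to the $W(\mathfrak{g},\mathfrak{a}_{\min})$-orbit of its $\nu$-parameter. One then checks this on nets: given $\pi_j \to \pi$, realize $\pi_j$ inside $\Ind_{P_{I_j}}^G \sigma_j \otimes \exp(i\nu_j)$ and $\pi$ inside $\Ind_{P_I}^G \sigma \otimes \exp(i\nu)$; the hull-kernel convergence forces the supports of the Plancherel measures to accumulate correctly, which pins down $\nu_j \to \nu$ in $i\mathfrak{a}_{\dom}^*$ (a limit of a stratum can only land in a smaller stratum, and the parameter varies continuously across the boundary because the inducing data on the bigger parabolic specialize to data on the smaller one — this is precisely the content of \eqref{eq-description-of-tempered-dual-via-im-inf-ch} together with parabolic induction in stages).

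An alternative, perhaps cleaner route avoids nets entirely: show that for each closed subset $C \subseteq i\mathfrak{a}_{\dom}^*$ the preimage $\ImInfCh^{-1}(C)$ is closed in $\widehat G_{\temp}$, i.e. that it is the hull of an ideal of $C^*_r(G)$. The central characters (infinitesimal characters) organize $C^*_r(G)$ via the action of $\mathcal Z(\mathfrak g_\C)$ as multipliers/central elements acting on the relevant subalgebras, and the imaginary part is a continuous polynomial invariant of the infinitesimal character by Lemma~\ref{lem-vogans-canonical-real-and-imaginary-parts}; the fibering of $\widehat G_{\temp}$ over $i\mathfrak{a}_{\dom}^*$ is then the map induced on spectra by a central $C^*$-subalgebra, which is automatically continuous. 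I would present whichever of these the authors' subsequent sections use; given that Section~\ref{sec-reduced-c-star-algebra-and-im-inf-ch} goes on to build an \emph{ideal filtration} indexed by $I\subseteq S$, the hull-kernel formulation is the natural one to adopt here.

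The main obstacle is the behaviour at the boundaries between strata: within a single stratum $\widehat M_{I,\mathrm{tempiric}}\times i\mathfrak{a}_{I,+}^*$ continuity of $\nu \mapsto i\nu$ is trivial, but one must verify that as $\nu_j$ in an open stratum $i\mathfrak{a}_{I,+}^*$ degenerates to a point $\nu_\infty$ in a deeper stratum $i\mathfrak{a}_{J,+}^*$ with $I\subsetneq J$, the corresponding representations $\pi_j$ do converge (in hull-kernel) to representations with parameter $\nu_\infty$, so that no ``jump'' in the imaginary part is introduced. This is handled by induction in stages — $\Ind_{P_I}^G = \Ind_{P_J}^G\circ\Ind_{P_I\cap M_J}^{M_J}$ — which exhibits a representation with parameter near the wall as genuinely close to one on the wall; combined with the well-definedness already guaranteed by Theorem~\ref{thm-vogan-and-maybe-knapp}, continuity follows. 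I expect this gluing-across-walls argument, together with a careful statement of the continuity of parabolic induction in the continuous parameter, to constitute essentially the whole proof.
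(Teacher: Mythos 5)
Your proposal takes a much harder route than the paper does, and the harder route as you describe it has a genuine gap. The paper's proof is short and purely algebraic: for any $T\in\mathcal Z(\mathfrak g_\CC)$, the function $\InfCh_T$ on $\widehat G_\temp$ is continuous, which is proved directly from the matrix-coefficient characterization of convergence in the dual (Lemma~\ref{lem-continuity-of-inf-ch}: if $\pi_\alpha\to\pi$ via $\langle v_\alpha,\pi_\alpha(a)v_\alpha\rangle\to\langle v,\pi(a)v\rangle$, one writes $\InfCh_T(\pi_\alpha)\langle v_\alpha,\pi_\alpha(a)v_\alpha\rangle=\langle v_\alpha,\pi_\alpha(Ta)v_\alpha\rangle$ and passes to the limit). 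Composing with the Harish-Chandra isomorphism via Chevalley generators gives continuity of the full $\InfCh_{\mathfrak h}$ into $\mathfrak h^*_\CC/W$, and then $\ImInfCh$ is obtained by composing with the continuous projection onto the imaginary part and the homeomorphism $i\mathfrak a^*_{\dom}\cong i\mathfrak a^*_{\min}/W(\mathfrak g,\mathfrak a_{\min})\hookrightarrow\Im(\mathfrak h_{\min,\CC})^*/W$. No parabolic induction, no strata, no Plancherel measure is needed.

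Your main route — classifying tempered irreducibles by cuspidal data, showing parabolic induction is continuous in $\nu$, and then arguing across strata — has its logical arrows pointing the wrong way. Continuity of $\nu\mapsto\Ind_P^G\sigma\otimes\exp(i\nu)$ gives you continuity of a map \emph{into} $\widehat G_\temp$, whereas you need continuity of a map \emph{out of} $\widehat G_\temp$. The step where you claim ``hull-kernel convergence forces the supports of the Plancherel measures to accumulate correctly, which pins down $\nu_j\to\nu$'' is precisely the content to be proved, not a consequence of something standard; the tempered dual is not Hausdorff, so a convergent net can have many limits, and you would need to show that \emph{every} hull-kernel limit of $\pi_j$ has $\ImInfCh$ equal to $\lim\nu_j$. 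The induction-in-stages observation only controls one boundary direction (approach from the interior of a facet); it does not rule out spurious limits. Making this rigorous essentially forces you to redo the matrix-coefficient argument, so the detour through the classification buys nothing. Your ``alternative, cleaner route'' at the end does gesture at the right idea — the action of $\mathcal Z(\mathfrak g_\CC)$ as central multipliers — but you do not supply the key technical step (that each $\InfCh_T$ is genuinely continuous on the dual), so as written it is an outline of the paper's proof rather than a proof.
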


\begin{remark} 
The full infinitesimal character is also continuous map, of course, see Theorem \ref{thm-continuity-of-inf-ch-for-g}, but it is the imaginary part which is essential to our view of the Connes-Kasparov isomorphism.
\end{remark}

\begin{definition}
\label{def-inf-ch}
    Let $G$ be any Lie group and let $T\in \mathcal{Z}(\mathfrak{g}_\C)$.  Define a function
    \begin{equation}
    \label{eq-continuity-of-inf-ch}
    \InfCh_T\colon \widehat{G} \longrightarrow \C
    \end{equation}
    on the unitary dual of $G$ by
    \[
    \InfCh_T(\pi)I = \pi(T)
    \]
    (this is an identity between operators acting on the smooth vectors in the Hilbert space of the representation $\pi$).
\end{definition}

We shall use the following simple and well-known fact.  We include a proof because we shall use the same argument to prove a less familiar fact later on, in Lemma~\ref{lem-continuity-of-inf-ch-in-families}. 

\begin{lemma}
\label{lem-continuity-of-inf-ch}
    Let $G$ be any Lie group.  For each $T \in \mathcal{Z}(\mathfrak{g}_\CC)$, the function $\InfCh_T$ is a continuous function on the unitary dual of $G$ \textup{(}and in particular, in the reductive case, on the tempered dual\textup{)}. 
\end{lemma}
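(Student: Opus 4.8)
The plan is to write the scalar $\InfCh_T(\pi)$ as a ratio of two matrix coefficients of $\pi$ built from a single compactly supported test function, and then to invoke the standard description of the topology on $\widehat G$. First I would recall the standard compatibility between central elements and convolution: the enveloping algebra $U(\mathfrak g_\C)$ acts on $C^\infty_c(G)$ by differential operators $f\mapsto D_uf$ (arising from differentiating translations), in such a way that $\pi(u)\pi(f)=\pi(D_uf)$ for every continuous unitary representation $\pi$ and every $f\in C^\infty_c(G)$, and moreover $\operatorname{supp}(D_uf)\subseteq\operatorname{supp}(f)$. Writing $D_T$ for the operator attached to our central element $T$, and using that $\pi(T)=\InfCh_T(\pi)\,I$ on smooth vectors (Schur's lemma, as in Definition~\ref{def-inf-ch}), one gets the operator identity $\InfCh_T(\pi)\,\pi(f)=\pi(D_Tf)$ on the Hilbert space of $\pi$, valid for all $f\in C^\infty_c(G)$.

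Next, fix $\pi\in\widehat G$. Since $\pi$ is nondegenerate, there is an $f\in C^\infty_c(G)$ and a unit vector $\xi$ in the space of $\pi$ with $\langle\pi(f)\xi,\xi\rangle\neq0$ (take $f$ from an approximate identity, for instance). Then
\[
\InfCh_T(\pi)=\frac{\langle\pi(D_Tf)\xi,\xi\rangle}{\langle\pi(f)\xi,\xi\rangle},
\]
so near $\pi$ the function $\InfCh_T$ is a ratio of matrix coefficients paired against the two functions $f$ and $D_Tf$, both supported in the fixed compact set $\operatorname{supp}(f)$.

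To finish I would use the Fell topology directly. If $\pi$ lay in the closure of the set $\{\pi'\in\widehat G:|\InfCh_T(\pi')-\InfCh_T(\pi)|\geq\varepsilon\}$, then by Fell's criterion there would be, for arbitrarily small $\delta$, a representation $\rho$ in that set and a finite family of vectors $(\eta^j)$ in its space with $\bigl|\sum_j\langle\rho(h)\eta^j,\eta^j\rangle-\langle\pi(h)\xi,\xi\rangle\bigr|<\delta$ for $h=f$ and $h=D_Tf$ simultaneously. Since the operator identity of the first step holds inside $\rho$, one has $\sum_j\langle\rho(D_Tf)\eta^j,\eta^j\rangle=\InfCh_T(\rho)\sum_j\langle\rho(f)\eta^j,\eta^j\rangle$; taking $\delta$ small (relative to $|\langle\pi(f)\xi,\xi\rangle|$ and $\varepsilon$) forces $\InfCh_T(\rho)$ to lie within $\varepsilon$ of $\InfCh_T(\pi)$, a contradiction. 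Hence the preimage under $\InfCh_T$ of the $\varepsilon$-ball about $\InfCh_T(\pi)$ is a neighbourhood of $\pi$; as $\pi$ and $\varepsilon$ were arbitrary, $\InfCh_T$ is continuous. (The same argument can be run with convergent nets and approximating vectors; I prefer the closure formulation to sidestep passing to subnets.)

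The routine inputs — existence of $D_T$ with the support bound, and the fact that central elements act by scalars — are entirely standard; the one point demanding a little care is the precise form of Fell's criterion (finite sums of matrix coefficients, convergence uniform on compacta), but this causes no difficulty here because the same vectors are used in the numerator and the denominator. I would also flag that it is exactly this ratio-of-matrix-coefficients presentation, rather than any black-box continuity statement, that will be reused — with $G$ replaced by a group in a smooth family — to prove the family version, Lemma~\ref{lem-continuity-of-inf-ch-in-families}.
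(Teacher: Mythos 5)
Your proposal is correct and rests on exactly the same mechanism as the paper's proof: the identity $\InfCh_T(\pi)\,\pi(f)=\pi(D_T f)$ (written $\pi(Ta)$ in the paper) lets one read $\InfCh_T$ off as a ratio of two matrix coefficients of $\pi$ against $f$ and $D_Tf$, and continuity then follows from the matrix-coefficient description of the hull-kernel topology. The only cosmetic difference is the form of that description you invoke — Fell's criterion with finite sums and a closure/contradiction phrasing, versus the paper's version \eqref{eq-topology-on-the-dual-from-matrix-coefficients} with nets and a single unit vector; both lead immediately to the same estimate.
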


In this paper we shall mostly understand the topology on the dual through  the bijection between closed sets in the spectrum of a $C^*$-algebra and closed, two-sided ideals in that $C^*$-algebra (in which  the closed set in the spectrum associated to an ideal is   the set of representations that vanish on the ideal). See \cite[Ch.3 and Ch.13]{DixmierEnglish} for information about the topology of the unitary dual, and for information about the topology on the spectra of $C^*$-algebras in general.  

But for the proof of the lemma, it will be better to use the following characterization of convergence in the unitary dual in terms of matrix coefficients: a net $\{ \pi_\alpha \}$  of irreducible unitary representations of $G$ converges to an irreducible unitary representation $\pi$ in the dual if and only if there are unit vectors $v_\alpha$ in the Hilbert spaces of the representations $\pi_\alpha$, and a unit vector $v$ in the Hilbert space of $\pi$, such that 
\begin{equation}
    \label{eq-topology-on-the-dual-from-matrix-coefficients}
\lim_{\alpha \to \infty} \langle v_\alpha, \pi_\alpha (a) v_\alpha \rangle = \langle v, \pi (a) v \rangle \qquad \forall a \in C_c^\infty (G).
\end{equation}
See again \cite[Ch.3 and Ch.13]{DixmierEnglish}.

\begin{proof}[Proof of Lemma~\ref{lem-continuity-of-inf-ch}] 
Let $T \in \mathcal{Z}(\mathfrak{g}_\C)$. Given \eqref{eq-topology-on-the-dual-from-matrix-coefficients} above, we wish to prove that 
\begin{equation}
    \label{eq-continuity-of-inf-ch-limit}
\lim_{\alpha \to \infty}\InfCh_T (\pi_\alpha) = \InfCh_T (\pi).
\end{equation}
Choose $a\in C_c^\infty (G)$ such that $\langle v, \pi(a) v \rangle \ne 0$ and then write 
\[
\InfCh_T (\pi_\alpha) \langle  v_\alpha, \pi_\alpha (a) v_\alpha\rangle =
 \langle  v_\alpha, \pi_\alpha (T)\pi_\alpha (a) v_\alpha\rangle
= \langle  v_\alpha, \pi_\alpha (Ta) v_\alpha\rangle,
\]
where in the last inner product we view $T$ as a differential operator on $G$. We find that 
\[
\lim_{\alpha \to \infty} \InfCh_T (\pi_\alpha) \langle  v_\alpha, \pi_\alpha (a) v_\alpha\rangle = 
\langle  v, \pi (Ta) v\rangle = \InfCh_T (\pi) \langle  v, \pi (a) v\rangle,
\]
from which \eqref{eq-continuity-of-inf-ch} follows.
\end{proof}

\begin{definition}
    Let $\mathfrak{h}$ be a Cartan subalgebra of $\mathfrak{g}$.
    Define the algebra morphism 
    \[
    \InfCh_{\lie{h}}: \widehat{G}_{\temp} \longrightarrow \lie{h}^*_{\CC}/W(\lie{g}_{\CC}, \lie{h}_{\CC})
    \]
    by means of the formula 
    \[
    \InfCh_{\lie{h}}(\pi) = [\lambda]\quad \text{when} \quad \InfCh(\pi) = \xi_{\lie{h},\lambda}.
    \]
    When $\mathfrak{h} = \mathfrak{h}_{\min}$, we shall often write simply $\InfCh$ in place of $\InfCh_{\mathfrak{h}_{\min}}$.
\end{definition}

\begin{theorem}\label{thm-continuity-of-inf-ch-for-g}
Let $G$ be a real reductive group and let $\lie{h}$ be a Cartan subalgebra of $\lie{g}$. 
The map
\[\InfCh_{\lie{h}}: \widehat{G}_{\temp} \to \lie{h}_{\C}^*/W(\lie{g}_{\C},\lie{h}_{\C})\]
is continuous.
\end{theorem}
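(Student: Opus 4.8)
The plan is to reduce the statement to finitely many applications of Lemma~\ref{lem-continuity-of-inf-ch}. Since $W := W(\lie{g}_\C,\lie{h}_\C)$ is a finite group acting on the finite-dimensional complex vector space $\lie{h}_\C^*$, the ring of invariants $\Sym(\lie{h}_\C)^{W}$ --- which I identify with the algebra of $W$-invariant polynomial functions on $\lie{h}_\C^*$ --- is a finitely generated $\C$-algebra. I would fix a finite generating set $p_1,\dots,p_N$ and set $T_j = \xi_{\lie{h}}^{-1}(p_j)\in\mathcal{Z}(\lie{g}_\C)$. Unwinding the definitions, if $\pi\in\widehat G_{\temp}$ has $\InfCh(\pi)=\xi_{\lie{h},\lambda}$ then $\InfCh_{T_j}(\pi)=p_j(\lambda)$; hence the composite
\[
\Phi\colon \widehat G_{\temp}\xrightarrow{\ \InfCh_{\lie{h}}\ }\lie{h}_\C^*/W\xrightarrow{\ \overline p\ }\C^N,\qquad \overline p([\lambda])=\bigl(p_1(\lambda),\dots,p_N(\lambda)\bigr),
\]
has $j$-th component the function $\pi\mapsto\InfCh_{T_j}(\pi)$, which is continuous by Lemma~\ref{lem-continuity-of-inf-ch}. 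So $\Phi$ is continuous, and it remains only to see that $\overline p$ is a topological embedding: once $\overline p$ is known to be a homeomorphism onto its image, $\InfCh_{\lie{h}}=\overline p^{\,-1}\circ\Phi$ is continuous and we are done.

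To see that $\overline p$ is an embedding I would argue as follows. It is well-defined and continuous because the $p_j$ are $W$-invariant polynomials. It is injective because for a finite group acting on affine space the ring of invariants separates orbits: two distinct $W$-orbits in $\lie{h}_\C^*$ are separated by some $W$-invariant polynomial, hence by a polynomial in $p_1,\dots,p_N$, hence by $\overline p$ itself. Finally the polynomial map $\lie{h}_\C^*\to\C^N$, $\lambda\mapsto(p_1(\lambda),\dots,p_N(\lambda))$, is a finite morphism --- every coordinate function on $\lie{h}_\C^*$ is integral over $\C[p_1,\dots,p_N]$ because $W$ is finite --- and finite morphisms are proper for the metric topology; composing with the quotient map $\lie{h}_\C^*\to\lie{h}_\C^*/W$ (which is proper, $W$ being finite) shows that $\overline p$ is proper, hence closed, and a continuous closed injection is a homeomorphism onto its image.

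The only substantive point is this last claim that $\overline p$ is an embedding; its two ingredients --- that the invariants of a finite group separate orbits, and that the inclusion $\C[p_1,\dots,p_N]\hookrightarrow\C[\lie{h}_\C^*]$ is integral so that the quotient map is proper --- are classical facts of invariant theory for finite groups, but they are what the argument rests on. (Equivalently, one may invoke the standard fact that $\lie{h}_\C^*/W$ with its quotient topology is homeomorphic, via $\overline p$, to the set of $\C$-points of the affine quotient variety $\operatorname{Spec}\Sym(\lie{h}_\C)^{W}$ endowed with its metric topology.) Everything else is formal: continuity of each $\InfCh_{T_j}$ is exactly Lemma~\ref{lem-continuity-of-inf-ch}, and the descent of continuity from $\overline p\circ\InfCh_{\lie{h}}$ to $\InfCh_{\lie{h}}$ is immediate once $\overline p$ is an embedding. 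The same scheme, applied with $\lie{h}=\lie{h}_{\min}$ and restricted to the imaginary coordinates, will then yield Theorem~\ref{thm-im-inf-ch-for-the-reduced-c-star-algebra}.
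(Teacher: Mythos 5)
Your proof is correct and takes the same route as the paper: reduce to continuity of the scalar functions $\InfCh_{T_j}$ via Lemma~\ref{lem-continuity-of-inf-ch} and a finite generating set of $\Sym(\lie{h}_\C)^W$. The paper's two-sentence proof simply asserts "it suffices to prove that $f\circ\InfCh_{\lie{h}}$ is continuous for all invariant $f$," leaving implicit exactly the invariant-theoretic point you spell out --- that Chevalley-type generators give a closed (finite, hence proper) injection $\lie{h}_\C^*/W \hookrightarrow \C^N$, so continuity of the composite descends.
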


\begin{proof}
It suffices to prove that the composition $f \circ \InfCh_{\lie{h}}$ is continuous for any $f \in \Sym(\lie{h}_{\C})^{W(\lie{g}_{\C},\lie{h}_{\C})}$. This follows from Lemma \ref{lem-continuity-of-inf-ch} and the Harish-Chandra homomorphism. \end{proof}

\begin{proof}[Proof of Theorem~\ref{thm-im-inf-ch-for-the-reduced-c-star-algebra}]
The inclusion
\[i\lie{a}^*_{\dom} \xrightarrow{\cong} i\lie{a}^*_{\min}/W(\lie{g},\lie{a}_{\min}) \to \Im(\lie{h}_{\min,\C})^*/ W(\lie{g}_\C,\lie{h}_{\min,\C})\]
is a homeomorphism onto its image. Therefore it suffices to prove that the composition of $\ImInfCh$ with the above inclusion is continuous. This composition is simply $\InfCh$ composed with the (continuous) projection
\[
 \lie{h}^*_{\min,\C}/W(\lie{g}_{\C},\lie{h}_{\min,\C})
 \longrightarrow 
 \Im (\lie{h}^*_{\min,\C})/W(\lie{g}_{\C},\lie{h}_{\min,\C}),
\]
so continuity follows from Theorem \ref{thm-continuity-of-inf-ch-for-g}.
\end{proof}

\begin{remark}\label{remark-embed-into-g-center-dauns-hofmann}
    By means of the Dauns-Hof\-mann theorem (see \cite{DaunsHoffman68} or \cite[Sec.\,4.4]{Pedersen79} for an exposition), the continuity of $\ImInfCh$ allows us to embed $C_0(i\lie{a}^*_\dom)$ into the center of the multiplier algebra of $C_r^*(G)$. 
    More precisely, there is a unique inclusion of the $C^*$-algebra $C_0(i \mathfrak{a}_{\dom}^*)$ into the center of the multiplier algebra of $C^*_r(G)$ such that 
    \[
    \pi(f\cdot a) = f( \ImInfCh (\pi)) \cdot \pi (a)
    \]
    for every tempered irreducible representation $\pi$, every $f\in C_0(i \mathfrak{a}_{\dom}^*)$ and every $a\in C^*_r (G)$.
\end{remark}

\subsection{Definition of the subquotients}
\label{subsec-filtration-of-c-star-g-by-ideals}
We shall use the imaginary part of the infinitesimal character to construct a (finite) filtration of $C^*_r(G)$ by ideals, whose subquotients have a particularly simple structure.

\begin{definition}
\label{def-ideals-and-quotients-of-the-reduced-c-star-algebra}
For every $I\subseteq S $ define 
\[
U_I 
    = \bigl\{ \, \nu\in \mathfrak{a}_{\dom}^* : 
\langle \nu,\beta\rangle \ne 0 \,\, \forall \beta \notin I \,\bigr\} 
     = \bigsqcup _{J\subseteq I} \mathfrak{a}^*_{J,+} .
\]
Thanks to the continuity of $\ImInfCh$, the tempered representations whose imaginary part of the infinitesimal character lies in $U_I$ forms an open subset of $\widehat{G}_{\temp}$. We shall denote by 
    \[
    C^*_r(G;U_I)\triangleleft C^*_r (G)
    \]
the $C^*$-algebra ideal in the reduced group $C^*$-algebra associated to this open subset of $\widehat{G}_{\temp}$.  That is, $C^*_r(G,U_I)$ is the mutual kernel of all the irreducible representations $\pi$ of $C^*_r(G)$ for which $\ImInfCh(\pi)$ lies in the complement of $U_I$.

It follows from the formula \eqref{eq-fmla-for-a-i-plus-star} that $\lie{a}_{I,+}^*$ is a closed subset of $U_I$.
We shall denote by 
    \[
    \pi_I \colon C^*_r(G;U_I)\longrightarrow C^*_r (G; I)
    \]
the quotient associated to this closed subset.  That is, $C^*_r (G; I)$ is the quotient of $C^*_r(G;U_I)$ by the ideal consisting of all elements that vanish in every representation $\pi$ with 
$\ImInfCh(\pi)\in U_I \setminus \mathfrak{a}^*_{I,+}$.
\end{definition}

By elementary $C^*$-algebra theory \cite[Ch.2]{DixmierEnglish}, every irreducible representation $\pi$ of $C^*_r (G)$ for which $\ImInfCh(\pi) \in U_I$ restricts to an irreducible   representation of $C^*_r (G ; U_I)$, and moreover every irreducible   representation of $C^*_r (G ; U_I)$ extends uniquely to an irreducible representation $\pi$ of $C^*_r (G)$ for which $\ImInfCh(\pi) \in U_I$.  Moreover, an irreducible   representation $\pi$ of $C^*_r (G ; U_I)$ factors through the quotient $C^*_r(G;I)$ if and only if $\ImInfCh(\pi)\in \mathfrak{a}^*_{\min,I}$.  More precisely: 

\begin{lemma} 
The procedure that associates to  each  irreducible representation $\pi'$ of   $C^*_r(G;I)$   the unique extension   to $C^*_r (G)$ of the composition  of $\pi'$ with the quotient map $C^*_r (G;U_I)\to C^*_r (G;I)$ in Definition~\textup{\ref{def-ideals-and-quotients-of-the-reduced-c-star-algebra}} 
determines  a   homeomorphism   
\[
 \pushQED{\qed}
\widehat{C^*_r(G;I)} \stackrel \cong \longrightarrow \bigl \{ \,[\pi]\in  \widehat G_{\mathrm{tempered}} : \ImInfCh(\pi) \in \mathfrak{a}^*_{I,+}\,\bigr \}.
\qedhere\popQED
\]
\end{lemma}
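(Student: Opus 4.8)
The plan is to derive the statement from two standard facts about the spectrum of a $C^*$-algebra $A$ and a closed two-sided ideal $J \triangleleft A$ — as developed in \cite[Ch.~2 and Ch.~3]{DixmierEnglish} — namely: (a) extension of representations gives a homeomorphism from $\widehat J$ onto the open subset of $\widehat A$ consisting of the irreducible representations that do not vanish identically on $J$; and (b) composition with the quotient map $A\to A/J$ gives a homeomorphism from $\widehat{A/J}$ onto the complementary closed subset of $\widehat A$, consisting of the irreducible representations that do vanish on $J$. Everything else is the bookkeeping of identifying the relevant open and closed subsets by means of the continuity of $\ImInfCh$ (Theorem~\ref{thm-im-inf-ch-for-the-reduced-c-star-algebra}).

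First I would apply (a) with $A = C^*_r(G)$ and $J = C^*_r(G;U_I)$. By Definition~\ref{def-ideals-and-quotients-of-the-reduced-c-star-algebra}, an irreducible representation $\pi$ of $C^*_r(G)$ fails to vanish on $C^*_r(G;U_I)$ precisely when $\ImInfCh(\pi) \in U_I$, and continuity of $\ImInfCh$ confirms that this is an open condition, as it must be; so $\widehat{C^*_r(G;U_I)}$ is identified, via unique extension of representations, with the open subspace $\{[\pi]\in \widehat G_{\mathrm{tempered}} : \ImInfCh(\pi)\in U_I\}$. Next I would apply (b) with $A = C^*_r(G;U_I)$ and $J = \Ker \pi_I$, the ideal of elements vanishing in every $\pi$ with $\ImInfCh(\pi) \in U_I \setminus \mathfrak{a}^*_{I,+}$. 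Since $\mathfrak{a}^*_{I,+}$ is closed in $U_I$ by formula~\eqref{eq-fmla-for-a-i-plus-star}, and $\ImInfCh$ restricted to $\widehat{C^*_r(G;U_I)}$ is continuous by the previous sentence, the irreducible representations of $C^*_r(G;U_I)$ that kill $\Ker\pi_I$ are exactly those with $\ImInfCh$ in $\mathfrak{a}^*_{I,+}$; so $\widehat{C^*_r(G;I)}$ is identified, via composition with $\pi_I$, with the closed subspace $\{[\pi]\in \widehat{C^*_r(G;U_I)} : \ImInfCh(\pi)\in \mathfrak{a}^*_{I,+}\}$.

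Composing these two identifications yields a homeomorphism from $\widehat{C^*_r(G;I)}$ onto $\{[\pi]\in \widehat G_{\mathrm{tempered}} : \ImInfCh(\pi)\in \mathfrak{a}^*_{I,+}\}$, a locally closed subspace for which the two-step subspace topology coincides with the direct one; and by construction this composite sends $\pi'$ to the unique extension to $C^*_r(G)$ of $\pi' \circ \pi_I$, which is exactly the procedure in the statement. The proof has no real obstacle; the only point deserving a word of care is that $\ImInfCh$ is being used simultaneously on $C^*_r(G)$, on the ideal $C^*_r(G;U_I)$, and on the quotient $C^*_r(G;I)$, and one should observe that in all three cases it is computed from the same central copy of $C_0(i\mathfrak{a}^*_{\dom})$ furnished by the Dauns--Hofmann theorem (Remark~\ref{remark-embed-into-g-center-dauns-hofmann}), whose image restricts to ideals and descends to quotients in the evident compatible way.
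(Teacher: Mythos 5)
Your proof is correct and takes essentially the same approach as the paper: the paper's justification is precisely the two standard Dixmier facts about spectra of ideals and of quotients, stated in the paragraph immediately preceding the lemma, combined with the continuity of $\ImInfCh$, and the lemma itself is then stamped with a \qed as an immediate consequence. Your remark about the compatibility of the Dauns--Hofmann embedding across the ideal and quotient is a nice extra point of care, though the paper does not spell it out.
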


\subsection{Discrete series representations and tempiric representations}

We shall use the following substantial results of Harish-Chandra about the discrete series representations  of real reductive groups:

\begin{theorem}
\label{thm-uniform-admissibility-for-discrete-series}
    Let $G$ be a real reductive group and let $K$ be a maximal compact subgroup of $G$.  Every irreducible representation of $K$ occurs as a $K$-type in at most finitely many discrete series representations of $G$. 
\end{theorem}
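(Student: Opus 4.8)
The plan is to reduce the statement to the well-known fact that the discrete series representations with a fixed infinitesimal character are finite in number, combined with a bound, uniform over infinitesimal characters in a suitable lattice, on how many of them contain a given $K$-type.

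First I would recall Harish-Chandra's parametrization: the discrete series representations of $G$ are indexed (finitely-to-one, in fact bijectively after a choice of positive system) by Harish-Chandra parameters $\lambda$ lying in a lattice shifted by $\rho$ inside $i\mathfrak{t}^*$, where $\mathfrak{t}$ is a compact Cartan subalgebra (so $G$ has discrete series only when such a $\mathfrak{t}$ exists, otherwise the theorem is vacuous). The infinitesimal character of the discrete series attached to $\lambda$ is $\xi_{\mathfrak{t},\lambda}$, so only finitely many discrete series share a given infinitesimal character. Thus it suffices to bound the number of discrete series with a given $K$-type $\tau$, and this reduces to two things: (a) the infinitesimal character of any discrete series containing $\tau$ is constrained to lie in a bounded region, hence takes only finitely many values; and (b) for each such value there are only finitely many discrete series.

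The key estimate for (a) is the standard inequality relating the infinitesimal character of a discrete series to the highest weights of its $K$-types: if $\tau$ with highest weight $\mu$ (with respect to a compatible positive system for $\mathfrak{k}$) occurs in the discrete series of Harish-Chandra parameter $\lambda$, then the lowest $K$-type has highest weight $\lambda + \rho - 2\rho_c$ (Blattner's formula / Hecht–Schmid), and every $K$-type $\mu$ that occurs satisfies $\|\mu\| \le \|\lambda\| + C$ for a constant $C$ depending only on $G$ — more precisely $\mu$ is obtained from the Blattner parameter by adding a nonnegative combination of noncompact positive roots, and conversely $\|\lambda\|$ is controlled by $\|\mu\|$ from below via $\langle \lambda, \lambda \rangle = \langle \mu + 2\rho_c - \rho, \mu + 2\rho_c - \rho\rangle + (\text{nonneg. terms})$ type relations. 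In any case, the point is: only finitely many Harish-Chandra parameters $\lambda$ (equivalently, only finitely many discrete series) can have $\tau$ as a $K$-type, because the lattice of parameters is discrete and the relevant parameters are confined to a ball of radius depending on $\|\mu\|$ and on $G$.

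The step I expect to be the main obstacle is making precise, and citing correctly, the inequality bounding $\|\lambda\|$ in terms of the highest weight of an occurring $K$-type. Rather than prove Blattner's conjecture, I would invoke it as a theorem (Hecht–Schmid) together with the elementary observation that the map $\lambda \mapsto (\text{lowest }K\text{-type of }\pi_\lambda)$ is finite-to-one with fibers of size bounded by $|W_K \backslash W|$ or similar, and that a fixed $\tau$ can be the lowest $K$-type of only boundedly many $\pi_\lambda$ while also every $\tau$ occurring in $\pi_\lambda$ forces $\pi_\lambda$'s lowest $K$-type to lie in a finite set determined by $\tau$ and $G$. Alternatively — and this is cleaner — I would note that Theorem~\ref{thm-uniform-admissibility-for-discrete-series} is itself a classical result of Harish-Chandra, so the honest approach in the paper is simply to cite it (e.g. via \cite{KnappRepTheorySemisimpleGroups}) and sketch the Blattner-parameter argument above as the reason it holds, exactly as the excerpt does for Theorem~\ref{thm-discrete-series-have-real-inf-ch}. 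The delicate point to get right in the write-up is the uniformity: the bound on the number of discrete series containing $\tau$ must not depend on anything except $\tau$ and $G$, which is automatic once one has the norm inequality and the discreteness of the parameter lattice.
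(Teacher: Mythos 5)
Your overall strategy matches the paper's — reduce to Harish-Chandra's parametrization of the discrete series, derive a norm bound on the Harish-Chandra parameter $\lambda$ from the presence of a fixed $K$-type $\tau$, and conclude finiteness from the discreteness of the parameter lattice — but the key estimate is obtained by a genuinely different route. You appeal to Blattner's theorem (Hecht–Schmid) and the lowest-$K$-type formula $\lambda + \rho_G - 2\rho_K$ to relate $\|\lambda\|$ to the $K$-type; the paper instead writes the Casimir as $\Omega_{\mathfrak{g}} = \Delta_{\mathfrak{s}} + \Omega_{\mathfrak{k}}$ and observes, by unitarity, that $\langle v, \Delta_{\mathfrak{s}} v\rangle \le 0$ for any unit smooth vector $v$. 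Evaluating $\Omega_{\mathfrak{g}}$ on a vector in the $\tau$-isotypical subspace then gives, in one line, an upper bound on $\|\lambda\|^2$ in terms of the Casimir eigenvalue of $\tau$. The Casimir argument is considerably lighter: given Harish-Chandra's computation of the discrete-series infinitesimal character it uses nothing about the $K$-type decomposition beyond unitarity, and avoids Blattner's conjecture entirely.

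There is also a concrete flaw in your sketch of the Blattner route. You write the identity as $\langle\lambda,\lambda\rangle = \langle\mu + 2\rho_K - \rho_G,\, \mu + 2\rho_K - \rho_G\rangle + (\text{nonnegative terms})$ and say $\|\lambda\|$ is "controlled from below." A lower bound on $\|\lambda\|$ does not confine the parameter to a bounded set and so gives no finiteness. The inequality in fact goes the other way: writing $\mu = \Lambda + Q$ with $\Lambda = \lambda + \rho_G - 2\rho_K$ the Blattner parameter and $Q$ a nonnegative integer combination of noncompact positive roots, so that $\lambda = \mu + 2\rho_K - \rho_G - Q$, one computes
\[
\|\lambda\|^2 = \|\mu + 2\rho_K - \rho_G\|^2 - 2\langle\lambda, Q\rangle - \|Q\|^2 \le \|\mu + 2\rho_K - \rho_G\|^2,
\]
the last step using $\langle\lambda, Q\rangle \ge 0$ by dominance of $\lambda$. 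So the approach can be salvaged, but the stated sign is backwards, and this is exactly the delicate step that the paper's Casimir computation renders unnecessary.
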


\begin{remark}
    In view of the aims of this paper, it is interesting to note that this result follows from the Connes-Kasparov isomorphism. See Section~\ref{appendix-connes-kasparov}.
\end{remark}

Theorem~\ref{thm-uniform-admissibility-for-discrete-series} has the following consequence: 

\begin{theorem}
\label{thm-full-uniform-admissibility}
    Let $G$ be a real reductive group and let $K$ be a maximal compact subgroup of $G$.  Every irreducible representation of $K$ occurs as a $K$-type in at most finitely many  tempiric representations of $G$. 
\end{theorem}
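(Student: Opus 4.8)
The plan is to reduce the statement about tempiric representations of $G$ to Theorem~\ref{thm-uniform-admissibility-for-discrete-series} about discrete series, using two ingredients: Vogan's classification (Theorem~\ref{thm-vogan-characterization-of-unitary-reps-with-fixed-im-inf-ch} with $\nu = 0$, i.e.\ $I = S$) and the behaviour of $K$-types under parabolic induction. Specializing Theorem~\ref{thm-vogan-characterization-of-unitary-reps-with-fixed-im-inf-ch} to $\nu = 0 \in \mathfrak{a}_{S,+}^*$, a tempiric representation $\pi$ of $G$ with $\ImInfCh(\pi) = 0$ is, up to the relevant identifications, of the form $\Ind_{P_S}^G \sigma$ for a tempiric representation $\sigma$ of the Levi $M_S$. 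But $M_S$ need not be compact, so this is not yet an induction from discrete series; however, by Theorem~\ref{thm-harish-chandra-principle-on-cuspidal-principal-series} every tempered irreducible of $M_S$ — in particular every tempiric one — embeds in $\Ind_{Q}^{M_S}\delta \otimes \exp(i\mu)$ with $Q = LAN$ a parabolic of $M_S$, $\delta$ a discrete series of $L$, and $\mu \in \mathfrak{a}^*$; and since $\sigma$ is tempiric, \eqref{eq-inf-ch-of-principal-series} forces $\mu = 0$. Composing inductions, $\pi$ embeds in $\Ind_{R}^G \delta$ where $R$ is a parabolic of $G$ with Levi $L$ and $\delta$ is a discrete series of $L$.

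First I would make the $K$-type finiteness argument precise. Let $\tau \in \widehat K$ be fixed. For each parabolic $R = L A_R N_R$ of $G$ (up to conjugacy there are finitely many, and up to $G$-conjugacy finitely many that can arise), and each discrete series $\delta$ of $L$, Frobenius reciprocity gives
\[
\dim \Hom_K\bigl(\tau, \Ind_R^G \delta\bigr) = \dim \Hom_{K \cap L}\bigl(\tau|_{K\cap L}, \delta|_{K\cap L}\bigr),
\]
using the standard identification of the $K$-spectrum of a parabolically induced representation (the $A_R$-part contributes nothing to $K$-types). Decomposing $\tau|_{K \cap L}$ into finitely many irreducibles of $K \cap L$, this is nonzero only if one of those constituents is a $K\cap L$-type of $\delta$; by Theorem~\ref{thm-uniform-admissibility-for-discrete-series} applied to the reductive group $L$, each such constituent occurs in only finitely many $\delta$. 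Hence only finitely many pairs $(R, \delta)$ — up to the equivalences that matter — yield an $\Ind_R^G\delta$ containing $\tau$, and since each such induced representation has finite length, only finitely many irreducible subrepresentations arise in total. Every tempiric $\pi$ containing $\tau$ is among these, so there are only finitely many.

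The main obstacle, and the step deserving the most care, is bookkeeping the finiteness of the relevant parabolics and discrete series: a priori $\delta$ ranges over an infinite set, and one must genuinely invoke Theorem~\ref{thm-uniform-admissibility-for-discrete-series} to cut it down, and one must check that distinct tempiric representations of $G$ do not all hide inside one $\Ind_R^G\delta$ in unbounded number — this is controlled by finite length of the induced representation (admissibility of tempered representations). A secondary point is to confirm that the passage from ``$\pi$ is tempiric'' to ``$\pi$ embeds in an induction from a \emph{discrete series}'' really does land us in the hypotheses of Theorem~\ref{thm-uniform-admissibility-for-discrete-series}; this is exactly where one uses \eqref{eq-inf-ch-of-principal-series} to kill the continuous parameter $\mu$, together with the transitivity of parabolic induction. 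Once these are in place the argument is a short combination of Frobenius reciprocity, finite length, and Theorem~\ref{thm-uniform-admissibility-for-discrete-series}.
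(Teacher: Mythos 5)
Your proof is correct and follows essentially the same route as the paper's: embed a tempiric $\pi$ into $\Ind_R^G \delta$ with $\delta$ a discrete series of a Levi subgroup, using Theorem~\ref{thm-harish-chandra-principle-on-cuspidal-principal-series} together with \eqref{eq-inf-ch-of-principal-series} to force the continuous parameter to vanish, then apply Frobenius reciprocity and Theorem~\ref{thm-uniform-admissibility-for-discrete-series}. Your opening reduction via Theorem~\ref{thm-vogan-characterization-of-unitary-reps-with-fixed-im-inf-ch} with $I=S$ is in fact vacuous, since $\mathfrak{a}_S$ is central so that $M_S A_S = G$ and $N_S = \{e\}$; it costs nothing, and your write-up is a bit more careful than the paper's in making explicit the finiteness of the relevant parabolics and the finite-length point.
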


\begin{proof}
By Theorem \ref{thm-harish-chandra-principle-on-cuspidal-principal-series} and \eqref{eq-inf-ch-of-principal-series}, for every tempiric representation $\pi$ of $G$ there is a parabolic subgroup $P {=} MAN$ of $G$  and a discrete series representation $(\sigma, H_\sigma)$ of $M$ such that $\pi$ embeds as a subrepresentation of $\Ind_P^G \sigma {\otimes} 1$. By Frobenius reciprocity, 
\[\Hom_K(V_\gamma, \Ind_P^G \sigma {\otimes} 1) \cong \Hom_{M \cap K}(V_\gamma, H_\sigma).\]
Therefore, $V_\gamma$ occurs in $\Ind_P^G \sigma {\otimes} 1$ only if an $(M{\cap} K)$-type occurring in $V_\gamma$ also occurs in $H_\sigma$. By Theorem \ref{thm-full-uniform-admissibility}, there are finitely many discrete series representations $\sigma$ of $M$ which contain one of the (finitely many) $(M{\cap} K)$-types occurring in $V_\gamma$. We conclude that $V_\gamma$ is contained in finitely many representations of the form $\Ind_P^G(\sigma \otimes 1)$, and in particular it is contained in finitely many tempiric representations.
\end{proof}

\begin{corollary}
\label{cor-discrete-topology-on-tempiric dual}
    The topological subspace  $\widehat G_{\mathrm{tempiric}}\subseteq \widehat G$ is a closed subspace, and the topology that it inherits from $\widehat G$ is the discrete topology \textup{(}in which every subset is an open set\textup{)}. \qed
\end{corollary}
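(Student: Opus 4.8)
\emph{Closedness.} Inside $\widehat G_{\temp}$ we have, by Definition~\ref{def-real-infinitesimal-character}, that $\widehat G_{\mathrm{tempiric}}=\ImInfCh^{-1}(\{0\})$, and $\ImInfCh\colon\widehat G_{\temp}\to i\lie a^*_{\dom}$ is continuous by Theorem~\ref{thm-im-inf-ch-for-the-reduced-c-star-algebra}; since $\{0\}$ is closed, $\widehat G_{\mathrm{tempiric}}$ is closed in $\widehat G_{\temp}$. As $\widehat G_{\temp}=\widehat{C_r^*(G)}$ is in turn closed in $\widehat G=\widehat{C^*(G)}$ --- being the hull of the kernel of the canonical surjection $C^*(G)\to C_r^*(G)$ --- the set $\widehat G_{\mathrm{tempiric}}$ is closed in $\widehat G$.

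\emph{Discreteness.} I would first record the standard fact that for each $\tau\in\widehat K$ the set $X_\tau=\{\pi\in\widehat G:\tau\text{ occurs in }\pi|_K\}$ is open in $\widehat G$. Let $e_\tau\in M(C^*(G))$ be the idempotent for which $\pi(e_\tau)$ is, in every representation $\pi$, the orthogonal projection onto the $\tau$-isotypic subspace of $H_\pi$; then $\pi\in X_\tau$ precisely when $\pi(e_\tau)\ne0$, and this happens precisely when $\pi(e_\tau c)\ne0$ for some $c\in C^*(G)$ (take $c$ in an approximate unit). Hence
\[
X_\tau=\bigcup_{c\in C^*(G)}\bigl\{\pi\in\widehat G:\pi(e_\tau c)\ne0\bigr\},
\]
a union of sets each of which is the complement of the hull of a closed two-sided ideal of $C^*(G)$, hence open. (Alternatively, one can run the matrix-coefficient argument used in the proof of Lemma~\ref{lem-continuity-of-inf-ch}.)

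Now fix $\pi_0\in\widehat G_{\mathrm{tempiric}}$ and choose $\tau\in\widehat K$ occurring in $\pi_0$. Then $V:=X_\tau\cap\widehat G_{\mathrm{tempiric}}$ is an open neighbourhood of $\pi_0$ in $\widehat G_{\mathrm{tempiric}}$, and it is a \emph{finite} set by Theorem~\ref{thm-full-uniform-admissibility}. It remains to use the well-known fact (a consequence of Harish-Chandra's admissibility theorem) that real reductive groups are liminal, i.e.\ $\pi(C^*(G))=\mathcal K(H_\pi)$ for every irreducible unitary representation $\pi$; this makes $\ker\pi$ a maximal element of the primitive ideal space of $C^*(G)$, so that $\widehat G$, and with it the subspace $\widehat G_{\mathrm{tempiric}}$, is a $T_1$ space. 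In a $T_1$ space the finite set $V\setminus\{\pi_0\}$ is closed, so $\{\pi_0\}$ is open in $V$ and hence open in $\widehat G_{\mathrm{tempiric}}$. As $\pi_0$ was arbitrary, the topology on $\widehat G_{\mathrm{tempiric}}$ is discrete.

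The only step requiring input beyond the continuity of $\ImInfCh$ and the finiteness statement of Theorem~\ref{thm-full-uniform-admissibility} is the last one: finiteness of $V$ by itself does not give discreteness, since a finite non-$T_1$ space (for instance a two-point space with one open and one closed point) need not be discrete. Liminality of $G$ supplies the missing $T_1$ property; equivalently, it lets one observe that a liminal $C^*$-algebra with finite spectrum has discrete spectrum, applied to the ideal cut out by $e_\tau$ in the relevant subquotient of $C_r^*(G)$ whose spectrum is $\widehat G_{\mathrm{tempiric}}$.
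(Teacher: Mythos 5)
Your proof is correct and follows essentially the same route as the paper: closedness as the preimage of $\{0\}$ under the continuous map $\ImInfCh$, and discreteness by combining the finite open neighbourhoods coming from Theorem~\ref{thm-full-uniform-admissibility} with the $T_1$ property of $\widehat G$. The only difference is that you spell out two facts the paper leaves to citations (openness of the sets $X_\tau$, and the $T_1$ property via liminality of $C^*(G)$, for which the paper simply refers to Dixmier, Sec.~15.5).
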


\begin{proof}
    The subspace $\widehat{G}_{\mathrm{tempiric}} \subseteq \widehat{G}_{\mathrm{tempered}}$ is the preimage of $0$ under $\ImInfCh$, and is therefore closed.  Theorem \ref{thm-full-uniform-admissibility} shows that the topology of $\widehat{G}_{\mathrm{tempiric}}$ has a basis of finite open sets.  But points in the unitary dual of a real reductive group are closed in the topology of the unitary dual (see for instance \cite[Sec.~15.5]{DixmierEnglish}).  The result follows.
\end{proof}

\subsection{Structure of the subquotients}

\nocite{CCH16}

In this section we shall fix a subset $I{\subseteq} S$, and determine $C^*_r (G;I)$ up to isomorphism.

\begin{theorem}
\label{thm-surjective-morphism-from-the-i-subquotient-of-g}
For every $ \sigma\in \widehat M_{I,\mathrm{tempiric}}$ there is a unique surjective $C^*$-algebra morphism
    \[
    \pi_\sigma \colon C^*_r (G;I)\longrightarrow  C_0\bigl (\mathfrak{a}^*_{I,+}, \mathfrak{K} (\Ind_{P_I}^G H_\sigma)\bigr )
    \]
such that  
\[
\pi_\sigma(a)(\nu) =  \pi_{\sigma,\nu}(a)
\]
for every $\nu\in \mathfrak{a}^*_{I,+}$ and every $a\in C^*_r (G;I)$.
\end{theorem}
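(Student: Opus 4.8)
\emph{Proof proposal.} The plan is to construct $\pi_\sigma$ directly. Fix $\sigma\in\widehat M_{I,\mathrm{tempiric}}$, and for $\nu\in\mathfrak{a}^*_{I,+}$ write $\pi_{\sigma,\nu}=\Ind_{P_I}^G\sigma\otimes\exp(i\nu)$, realised by means of the compact picture of parabolic induction on the single Hilbert space $H=\Ind_{P_I}^G H_\sigma$, so that neither $H$ nor the restriction $\pi_{\sigma,\nu}|_K$ depends on $\nu$. Uniqueness of $\pi_\sigma$ is then immediate, since the displayed identity $\pi_\sigma(a)(\nu)=\pi_{\sigma,\nu}(a)$ determines $\pi_\sigma(a)$ pointwise in $\nu$. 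By Theorem~\ref{thm-vogan-characterization-of-unitary-reps-with-fixed-im-inf-ch} each $\pi_{\sigma,\nu}$ is irreducible and tempered with $\ImInfCh(\pi_{\sigma,\nu})=\nu\in\mathfrak{a}^*_{I,+}$, so by the lemma following Definition~\ref{def-ideals-and-quotients-of-the-reduced-c-star-algebra} it descends to an irreducible $*$-representation of $C^*_r(G;I)$ on $H$. Since $G$---and hence the Levi $M_I$---is real reductive, $C^*_r(G)$ is liminal (Harish-Chandra), and therefore so is the subquotient $C^*_r(G;I)$; combined with irreducibility this yields $\pi_{\sigma,\nu}\bigl(C^*_r(G;I)\bigr)=\mathfrak{K}(H)$ for every $\nu$.

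Next I would show that $a\mapsto\bigl(\nu\mapsto\pi_{\sigma,\nu}(a)\bigr)$ takes values in $C_0\bigl(\mathfrak{a}^*_{I,+},\mathfrak{K}(H)\bigr)$. For $a\in C_c^\infty(G)$ the operator $\pi_{\sigma,\nu}(a)$ is, in the compact picture, an integral operator over $K/(K\cap M_I)$ whose operator-valued kernel is built from $a$ and from the factor $\exp\bigl(i\nu(\cdot)\bigr)$ coming from the Iwasawa-type projection; it thus depends smoothly---indeed holomorphically---on $\nu$, so $\nu\mapsto\pi_{\sigma,\nu}(a)$ is norm-continuous, and the uniform bound $\|\pi_{\sigma,\nu}(a)\|\le\|a\|_{C^*_r(G;I)}$ extends norm-continuity to all $a\in C^*_r(G;I)$ by density. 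For the vanishing at infinity I would invoke Remark~\ref{remark-embed-into-g-center-dauns-hofmann}: transported to the subquotient, the Dauns--Hofmann embedding makes $C^*_r(G;I)$ a nondegenerate $C_0(\mathfrak{a}^*_{I,+})$-algebra with $\pi_{\sigma,\nu}(f\cdot a)=f(\nu)\,\pi_{\sigma,\nu}(a)$, whose fibre at $\nu$ has spectrum exactly $\{\,\pi_{\sigma,\nu}:\sigma\in\widehat M_{I,\mathrm{tempiric}}\,\}$. Hence $\nu\mapsto\sup_\sigma\|\pi_{\sigma,\nu}(a)\|$ is the fibrewise-norm function of an element of a $C_0(\mathfrak{a}^*_{I,+})$-algebra and so lies in $C_0(\mathfrak{a}^*_{I,+})$; a fortiori so does $\nu\mapsto\|\pi_{\sigma,\nu}(a)\|$ for our fixed $\sigma$. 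Together these show $\pi_\sigma\colon C^*_r(G;I)\to C_0\bigl(\mathfrak{a}^*_{I,+},\mathfrak{K}(H)\bigr)$ is a well-defined $*$-homomorphism.

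It remains to prove surjectivity. The image $B:=\pi_\sigma\bigl(C^*_r(G;I)\bigr)$ is a closed $*$-subalgebra of $C_0\bigl(\mathfrak{a}^*_{I,+},\mathfrak{K}(H)\bigr)$; by the identity $\pi_\sigma(f\cdot a)=f\cdot\pi_\sigma(a)$ it is a $C_0(\mathfrak{a}^*_{I,+})$-submodule, hence itself a $C_0(\mathfrak{a}^*_{I,+})$-algebra; and at each $\nu$ the fibre evaluation $\mathrm{ev}_\nu(B)=\pi_{\sigma,\nu}\bigl(C^*_r(G;I)\bigr)$ equals $\mathfrak{K}(H)$ by the first paragraph. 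Thus the inclusion $B\hookrightarrow C_0\bigl(\mathfrak{a}^*_{I,+},\mathfrak{K}(H)\bigr)$ is an injective $C_0(\mathfrak{a}^*_{I,+})$-linear $*$-homomorphism of $C_0(\mathfrak{a}^*_{I,+})$-algebras that is an isomorphism on every fibre, and the standard gluing argument for $C_0(X)$-algebras---patching local lifts by a partition of unity on $X=\mathfrak{a}^*_{I,+}$, with decay controlled because $C_0(\mathfrak{a}^*_{I,+})B$ is dense in $B$---shows that it is onto. Taking $\pi_\sigma$ to be this homomorphism completes the proof.

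I expect the genuine work to be concentrated in the two \emph{global} assertions---that $\pi_\sigma$ lands in $C_0$ rather than merely in $C_b$, and that it is surjective---since both rest on the $C_0(\mathfrak{a}^*_{I,+})$-algebra structure of $C^*_r(G;I)$ furnished by Remark~\ref{remark-embed-into-g-center-dauns-hofmann} together with a precise identification of the fibres over $\mathfrak{a}^*_{I,+}$ (Theorem~\ref{thm-vogan-characterization-of-unitary-reps-with-fixed-im-inf-ch}); by contrast the pointwise statements---irreducibility of $\pi_{\sigma,\nu}$, compactness of $\pi_{\sigma,\nu}(a)$, and norm-continuity in $\nu$---are comparatively routine.
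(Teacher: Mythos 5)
Your proposal is correct and its overall architecture (define $\pi_\sigma$ pointwise, establish $C_0$-valuedness, prove surjectivity) matches the paper's, but you resolve the two nontrivial steps by genuinely different means.  For the fact that $\nu\mapsto\pi_{\sigma,\nu}(a)$ lands in $C_0$ rather than merely $C_b$, the paper cites \cite[Lemma~4.10]{CCH16} and describes it as ``essentially the Riemann--Lebesgue lemma''---a concrete Fourier-analytic estimate, which in the proof of Lemma~\ref{lem-mackey-embedding} the authors carry out by first reducing to $K$-bi-finite smooth functions (so that each $\pi_{\sigma,\nu}(a)$ is effectively finite-rank, and matrix coefficients can be estimated directly) and then passing to the closure by density.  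You instead deduce vanishing at infinity abstractly from the $C_0(\mathfrak{a}^*_{I,+})$-algebra structure of $C^*_r(G;I)$ (Remark~\ref{remark-embed-into-g-center-dauns-hofmann} transported to the subquotient), using the upper-semicontinuous, vanishing-at-infinity norm function on the spectrum.  This is legitimate and arguably more structural, though a small caveat: the fibrewise-norm function of an element of a $C_0(X)$-algebra is in general only upper semicontinuous and $C_0$, not continuous---you need your separately established continuity of $\nu\mapsto\|\pi_{\sigma,\nu}(a)\|$, so the phrase ``and so lies in $C_0$'' slightly overreaches what the $C_0(X)$-structure alone delivers.  For surjectivity, the paper invokes Dixmier's Stone--Weierstrass theorem \cite[Thm.~11.1.8]{DixmierEnglish} (for the type~I algebra $C_0(\mathfrak{a}^*_{I,+},\mathfrak{K}(H))$, a $*$-subalgebra separating the points of the spectrum and not annihilated by any point is the whole algebra), whereas you run a partition-of-unity gluing argument using that the image $B$ is a $C_0(\mathfrak{a}^*_{I,+})$-submodule with full fibres.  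Both are standard; the Stone--Weierstrass route is a one-line citation, while your gluing argument is more elementary and self-contained.  One place where your write-up is slightly under-argued is norm-continuity of $\nu\mapsto\pi_{\sigma,\nu}(a)$ for $a\in C_c^\infty(G)$ when $H_\sigma$ is infinite-dimensional: holomorphy of the operator-valued integral kernel in the $B(H_\sigma)$-norm does not by itself give norm-continuity of the induced operator on $L^2(K,H_\sigma)^{K_I}$; the clean fix (which the paper uses in Lemma~\ref{lem-mackey-embedding}) is to first restrict to $K$-bi-finite $a$, where the operator lives on a fixed finite-dimensional subspace, and then extend by density.
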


\begin{proof} 
Of course, we would like to \emph{define} $\pi_\sigma$ by the formula
\[
\pi_\sigma(a)(\nu) = \pi_{\sigma,\nu}(a)
\]
that appears in the statement of the theorem.
That we may do so follows from the considerations in \cite[Lemma~4.10]{CCH16}, which show that $\pi_{\sigma,\nu}(a)$ is a continuous function of $\nu$, and indeed a $C_0$-function of $\nu$ (this is essentially the Riemanna-Lebesgue lemma; compare the proof of Lemma~\ref{lem-mackey-embedding} below, which deals with the same issue for a different group).  It remains to prove that $\pi_\sigma$ is surjective; this follows from Dixmier's Stone-Weierstrass theorem \cite[Thm.~11.1.8]{DixmierEnglish}.
\end{proof} 

\begin{theorem}
\label{thm-i-subquotient-of-g-isomorphism}
There is a unique $C^*$-algebra isomorphism
    \[
     C^*_r (G;I)\stackrel \cong\longrightarrow \bigoplus _{\sigma\in \widehat M_{I,\mathrm{tempiric}}} C_0\bigl (\mathfrak{a}^*_{I,+}, \mathfrak{K} (\Ind_{P_I}^G H_\sigma)\bigr )
    \]
whose composition with the projection onto the $\sigma$-summand in the direct sum is $\pi_\sigma$.
\end{theorem}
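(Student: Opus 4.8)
The plan is to assemble the isomorphism from the individual surjections $\pi_\sigma$ of Theorem~\ref{thm-surjective-morphism-from-the-i-subquotient-of-g} and then show that the combined map is both injective and surjective. First I would form the $C^*$-algebra morphism
\[
\Pi \colon C^*_r(G;I) \longrightarrow \bigoplus_{\sigma\in\widehat M_{I,\mathrm{tempiric}}} C_0\bigl(\mathfrak{a}^*_{I,+}, \mathfrak{K}(\Ind_{P_I}^G H_\sigma)\bigr), \qquad \Pi(a) = (\pi_\sigma(a))_\sigma,
\]
noting that this lands in the $C_0$-direct sum (not the $\ell^\infty$-product) because, by Theorem~\ref{thm-full-uniform-admissibility}, only finitely many $\sigma$ contribute any given $K$-type, so for $a$ in a dense subalgebra only finitely many components are nonzero; one then passes to the closure. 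The uniqueness clause is immediate once existence is established, since the stated compatibility with the projections onto the $\sigma$-summands pins down $\Pi$ on the nose.

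Next I would prove injectivity of $\Pi$. An element $a\in C^*_r(G;I)$ lies in $\ker\Pi$ iff $\pi_{\sigma,\nu}(a)=0$ for every $\sigma\in\widehat M_{I,\mathrm{tempiric}}$ and every $\nu\in\mathfrak{a}^*_{I,+}$. By the Lemma following Definition~\ref{def-ideals-and-quotients-of-the-reduced-c-star-algebra}, the representations $\pi_{\sigma,\nu}$ with $\nu\in\mathfrak{a}^*_{I,+}$ are, up to the identification there, exactly all the irreducible representations of $C^*_r(G;I)$: by Theorem~\ref{thm-vogan-characterization-of-unitary-reps-with-fixed-im-inf-ch} the tempered $\pi$ with $\ImInfCh(\pi)=\nu$ are precisely the $\Ind_{P_I}^G\sigma\otimes\exp(i\nu)$ for $\sigma$ tempiric on $M_I$. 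Since a $C^*$-algebra is faithfully represented by the direct sum of all its irreducible representations, $\ker\Pi=0$.

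Finally I would prove surjectivity. Each $\pi_\sigma$ is already surjective onto its summand by Theorem~\ref{thm-surjective-morphism-from-the-i-subquotient-of-g}, so the image of $\Pi$ is a $C^*$-subalgebra of the direct sum that maps onto each factor. The point is to upgrade this to surjectivity onto the whole direct sum, i.e.\ to show the image separates the summands. Here I would invoke Dixmier's Stone--Weierstrass theorem for $C^*$-algebras \cite[Thm.~11.1.8]{DixmierEnglish} applied to $B = \overline{\Pi(C^*_r(G;I))}$ inside the direct sum: it suffices to check that $B$ is not contained in any primitive ideal's kernel beyond what is forced, and that for two distinct irreducible representations of the direct sum (necessarily of the form ``evaluate the $\sigma$-component at $\nu$'') the subalgebra $B$ does not identify them and does not annihilate either. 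Distinct $(\sigma,\nu)$ and $(\sigma',\nu')$ correspond to inequivalent irreducibles $\pi_{\sigma,\nu}\not\cong\pi_{\sigma',\nu'}$ of $C^*_r(G;I)$ — using again the classification of Theorem~\ref{thm-vogan-characterization-of-unitary-reps-with-fixed-im-inf-ch} — and inequivalent irreducible representations of a $C^*$-algebra are never identified by the algebra's own image, so the Stone--Weierstrass hypotheses hold and $B$ is the whole direct sum.

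The main obstacle I anticipate is the bookkeeping in the surjectivity step: one must be careful that the direct sum is indexed by the possibly infinite set $\widehat M_{I,\mathrm{tempiric}}$, so that the relevant completions and the hypotheses of Dixmier's Stone--Weierstrass theorem are verified for the (non-unital, non-$\sigma$-unital in general) $C^*$-algebra $C_0(\mathfrak{a}^*_{I,+},\cdot)$-valued direct sum; the key input making this manageable is the finiteness from Theorem~\ref{thm-full-uniform-admissibility}, which ensures the map really does land in the $C_0$-direct sum and that its image contains enough finitely-supported sections to separate points.
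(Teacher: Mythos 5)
Your proposal is correct and reaches the right conclusion, but it takes a somewhat different route from the paper's one-line argument. The paper deduces the isomorphism from the surjectivity of each individual $\pi_\sigma$ (Theorem~\ref{thm-surjective-morphism-from-the-i-subquotient-of-g}) together with the observation that the supports of the $\pi_\sigma$ in $\widehat{C^*_r(G;I)}$ are pairwise disjoint \emph{clopen} subsets — the discreteness of $\widehat M_{I,\mathrm{tempiric}}$ from Corollary~\ref{cor-discrete-topology-on-tempiric dual} is what makes them open — and then invokes a general decomposition principle (as in \cite[Lemma~5.14]{CCH16}) that a $C^*$-algebra whose spectrum decomposes into clopen pieces is the $C_0$-direct sum of the corresponding quotients. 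You instead assemble the combined morphism $\Pi$ directly, check injectivity via the classification of irreducibles of $C^*_r(G;I)$, and upgrade the coordinatewise surjectivity to global surjectivity by a second appeal to Dixmier's Stone--Weierstrass theorem applied to the image of $\Pi$ inside the direct sum. This is a valid alternative: the verification that inequivalent $(\sigma,\nu)$ give inequivalent restrictions to $\Pi(C^*_r(G;I))$ is exactly the separating condition Dixmier requires, and the target algebra is liminal so the theorem applies. What the paper's route buys is brevity and a cleaner conceptual picture (a clopen partition of the spectrum); what yours buys is that you avoid invoking the topological statement about discreteness of $\widehat M_{I,\mathrm{tempiric}}$ in favor of a purely representation-theoretic separation argument, at the cost of a second Stone--Weierstrass application.

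Two small points to tighten. First, in the well-definedness step (landing in the $C_0$-direct sum rather than the $\ell^\infty$-product), the relevant finiteness is Theorem~\ref{thm-full-uniform-admissibility} applied to the group $M_I$ (not $G$): for a $K$-bi-finite $a$, a $K$-type $\tau$ of $a$ appears in $\Ind_{P_I}^G H_\sigma$ iff some $K_I$-type of $\tau|_{K_I}$ appears in $\sigma$, and by uniform admissibility for $M_I$ only finitely many tempiric $\sigma$ admit any of the finitely many $K_I$-types of $\tau$. Second, the image of a $*$-homomorphism between $C^*$-algebras is automatically closed, so writing $B = \overline{\Pi(C^*_r(G;I))}$ is harmless but redundant; you may as well write $B=\Pi(C^*_r(G;I))$.
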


\begin{proof}
    This follows from Theorem~\ref{thm-surjective-morphism-from-the-i-subquotient-of-g} and the fact that the supports of the representations $\pi_\sigma$, for $\sigma\in \widehat{M}_{I,\mathrm{tempiric}}$, are pairwise disjoint.  Compare \cite[Lemma~5.14]{CCH16}.
\end{proof}

\section{The Cartan motion group}
\label{sec-cartan-motion-group}

Let $G$ be a real reductive group and let $K$ be a  maximal compact subgroup.  The associated \emph{Cartan motion group} is the semidirect product group 
\begin{equation*}
G_0 = K \ltimes (\mathfrak{g}/\mathfrak{k})
\end{equation*}
that is constructed from the adjoint action of $K$ on $\mathfrak{g}/\mathfrak{k}$. In this section we shall review the unitary representation theory of $G_0$ and related matters, and construct a theory of the imaginary part of the infinitesimal character for the irreducible unitary representations of $G_0$.

\subsection{Irreducible unitary representations of the Cartan motion group}

In what follows we shall use the Cartan decomposition 
$
\mathfrak{g} = \mathfrak{k} \oplus \mathfrak{s}$
to  identify the quotient vector space  $\mathfrak{g}/\mathfrak{k}$ with $\mathfrak{s}$, and so write the Cartan motion group   as
\begin{equation}
\label{eq-def-of-cartan-motion-group-with-s}
G_0 = K\ltimes \mathfrak{s}.
\end{equation}
Let  $\xi$ be a  linear functional on the real vector space $\mathfrak{s}$, and let $K_\xi$ be its isotropy subgroup   in $K$ for the coadjoint action. Mackey \cite{Mackey49} proved that if $\tau$ is any irreducible unitary representation of $K_\xi$, then the induced unitary representation 
\begin{equation}
\label{eq-def-rho-tau-xi}
\rho_{\tau,\xi}= \Ind_{K_\xi\ltimes \mathfrak{s}}^{K\ltimes \mathfrak{s}} \tau\otimes \exp (i \xi) 
\end{equation}
is an irreducible unitary representation of $G_0$.  He further proved that every irreducible unitary representation of $G_0$ is equivalent to a representation of this form, and that the only equivalences among these representations are the obvious ones coming from $K$-conjugacies, so that from \eqref{eq-def-rho-tau-xi} we obtain a bijection
\begin{equation}
\label{eq-mackey-formula-for-motion-group-dual}
\widehat G_0 \cong  \Bigr ( \bigsqcup_{\xi\in \mathfrak{s}^*}   \widehat K_\xi \Bigl ) \Big / K .
\end{equation}
For a review of these results from  the $C^*$-algebraic point of view of this paper, see \cite[Sec.\,3.1]{Higson08}.

The bijection in \eqref{eq-mackey-formula-for-motion-group-dual} can  adapted to our current purposes by using the inclusion of $\mathfrak{a}_{\min}$ into $\mathfrak{s}$, along with  the following terminology and conventions (for our later purposes we shall give  more  details  in the following definition than are immediately needed).

\begin{definition}
    \label{def-rho-of-sigma-nu}
Let $I$ be a subset of the set of simple roots in $\Delta ^+ (\mathfrak{g},\mathfrak{a}_{\min})$ and let  $\nu\in \mathfrak{a}_{I,+}^*$ (notation from Definition~\ref{def-a-i-plus}).  Let $K_I$ be the centralizer of $\mathfrak{a}_I$ in $K$ (this may  also be described as the isotropy group of the single element $\nu$, as well as the intersection $K{\cap}M_I$) and let $\sigma$ be a unitary representation of $K_I$.
We shall denote by $\rho_{\sigma,\nu}$ the unitary representation 
\[
\rho_{\sigma, \nu} = \Ind_{K_I\ltimes \mathfrak{s}}^{K\ltimes \mathfrak{s}} \sigma\otimes \exp (i \nu) 
\]
of $G_0 = K\ltimes \mathfrak{s}$. Here $\nu$ is extended to a linear functional on $\mathfrak{s}$ that is zero on the orthogonal complement of $\mathfrak{a}_I$ in $\mathfrak{s}$.  We shall realize the representation as follows: the Hilbert space of the representation is $\Ind_{K_I}^K H_\sigma  = L^2(K,H_\sigma)^{K_I}$ and the action of $G_0$ on an element $f \in L^2(K,H_\sigma)^{K_I}$ is determined by
\[
\begin{cases} 
(k\cdot f )(k_1) = f (k^{-1}k_1) & \forall k,k_1\in K,
\\
(X \cdot f)(k_1) = e^{i \nu (\Ad_{k_1}^{-1}(X))}f(k_1) &  \forall X \in \mathfrak{s}\,\,\forall k_1\in K.
\end{cases}
\]
\end{definition}

\begin{remark}
For the time being we are interested in the cases of the definition where $\sigma$ is an irreducible unitary representation of $K_I$, but later we shall consider some unitary but reducible representations, too. 
\end{remark}

With this notation established, the bijection \eqref{eq-mackey-formula-for-motion-group-dual} can be recast as follows: 

\begin{theorem}
     The representations $\rho_{\tau, \nu}$ of $G_0$, as $I$ ranges over the finite subsets of the set of $S$ of simple restricted roots, as $\tau $ ranges over the unitary dual of $K_I$, and as $\nu$ ranges over $\mathfrak{a}^*_{I,+}$, are irreducible and pairwise inequivalent.  They exhaust the unitary dual of $G_0$, and determine a bijection
\begin{equation*}
  \pushQED{\qed} \widehat G_0 \cong \bigsqcup _{I\subseteq S}  \widehat K_I \times \mathfrak{a}^*_{I,+}. 
  \qedhere \popQED
\end{equation*}
\end{theorem}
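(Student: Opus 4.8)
The plan is to deduce the stated bijection directly from Mackey's classification \eqref{eq-mackey-formula-for-motion-group-dual}, which already gives a bijection $\widehat G_0 \cong \bigl(\bigsqcup_{\xi\in \mathfrak s^*}\widehat K_\xi\bigr)/K$, by choosing a good set of orbit representatives for the $K$-action on $\mathfrak s^*$ and tracking the induced bijection on isotropy duals. First I would use the inner product on $\mathfrak s$ coming from $B$ to identify $\mathfrak s^*$ with $\mathfrak s$ equivariantly, so that the coadjoint orbits of $K$ on $\mathfrak s^*$ become the adjoint orbits of $K$ on $\mathfrak s$. Then I would invoke the standard fact that every adjoint $K$-orbit in $\mathfrak s$ meets the closed Weyl chamber $\overline{\mathfrak a_{\dom}}\subseteq \mathfrak a_{\min}\subseteq \mathfrak s$ in exactly one point (this is the ``polar decomposition'' $\mathfrak s = \bigcup_{k\in K}\Ad_k(\overline{\mathfrak a_{\dom}})$ together with the fact that two elements of a chamber conjugate under $K$ are equal — see e.g.\ \cite{KnappBeyond}). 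Under the identification $\overline{\mathfrak a_{\dom}}^* \cong \mathfrak a_{\dom}^*$ this partitions the orbit space as $\mathfrak s^*/K \cong \mathfrak a_{\dom}^* = \bigsqcup_{I\subseteq S}\mathfrak a_{I,+}^*$, using the stratification \eqref{eq-partition-of-space-of-im-inf-chs}.

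Next I would compute the isotropy group. For $\nu\in\mathfrak a_{I,+}^*$, viewed as an element of $\mathfrak a_I\subseteq\mathfrak s$, the centralizer of $\nu$ in $K$ is exactly the centralizer of $\mathfrak a_I$ in $K$: the inclusion ``$\supseteq$'' is clear, and ``$\subseteq$'' holds because $\nu$ is a \emph{regular} element of the subalgebra $\mathfrak a_I$ in the sense that its centralizer in $\mathfrak g$ equals the centralizer of all of $\mathfrak a_I$ — this is precisely the content of $\nu\in\mathfrak a_{I,+}^*$, i.e.\ $\langle\nu,\beta\rangle\ne 0$ for all $\beta\notin I$ forces any root vanishing on $\nu$ to vanish on $\mathfrak a_I$. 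Thus $K_\nu = K_I$ in the notation of Definition~\ref{def-rho-of-sigma-nu}. Feeding this into \eqref{eq-mackey-formula-for-motion-group-dual}: a $K$-orbit of pairs $(\xi,\tau)$ with $\xi\in\mathfrak s^*$ and $\tau\in\widehat K_\xi$ has a unique representative with $\xi = \nu\in\mathfrak a_{I,+}^*$ for a unique $I$, and then $\tau$ ranges freely over $\widehat{K_\nu} = \widehat{K_I}$ (there is no residual stabilizer acting on $\widehat{K_I}$ beyond inner automorphisms, which act trivially on the dual). This yields the bijection of sets
\[
\widehat G_0 \;\cong\; \bigsqcup_{I\subseteq S}\widehat K_I \times \mathfrak a_{I,+}^*,
\]
with the class of $(\tau,\nu)$ corresponding to $\rho_{\tau,\nu} = \Ind_{K_I\ltimes\mathfrak s}^{K\ltimes\mathfrak s}\tau\otimes\exp(i\nu)$, which is exactly $\rho_{\tau,\nu}$ of Definition~\ref{def-rho-of-sigma-nu}. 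Irreducibility and pairwise inequivalence of the $\rho_{\tau,\nu}$, and the fact that they exhaust $\widehat G_0$, are then immediate consequences of Mackey's theorem as recorded above.

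The main obstacle, such as it is, is bookkeeping rather than mathematics: one must check carefully that the chamber $\overline{\mathfrak a_{\dom}}$ used in the polar decomposition of $\mathfrak s$ is the same chamber (up to the chosen identifications) that defines the stratification $\bigsqcup_I\mathfrak a_{I,+}^*$, and that the extension convention for $\nu$ (zero on the orthogonal complement of $\mathfrak a_I$ in $\mathfrak s$, as in Definition~\ref{def-rho-of-sigma-nu}) is compatible with viewing $\nu\in\mathfrak a_{I,+}^*\subseteq\mathfrak a_{\dom}^*\subseteq\mathfrak s^*$. Both are routine given the setup in Section~\ref{sec-imaginary-part-of-inf-ch}, so I expect the proof to be short; in the paper one could reasonably just cite \eqref{eq-mackey-formula-for-motion-group-dual} together with the identification $\mathfrak s^*/K\cong\mathfrak a_{\dom}^*$ and the isotropy computation $K_\nu = K_I$, and leave the rest to the reader.
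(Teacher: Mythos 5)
Your proposal is correct and takes essentially the same approach as the paper: the theorem is stated with a \(\qed\) directly after, indicating it is regarded as an immediate consequence of Mackey's classification \eqref{eq-mackey-formula-for-motion-group-dual}, the identification of orbit representatives in \(\mathfrak{a}_{\dom}^*\) via the inclusion of \(\mathfrak{a}_{\min}\) into \(\mathfrak{s}\), and the isotropy computation \(K_\nu = K_I\) that the paper records parenthetically in Definition~\ref{def-rho-of-sigma-nu}. You have simply written out explicitly the polar decomposition and stabilizer bookkeeping that the paper leaves to the reader.
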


\begin{remark}
Of course, this is to be compared with the similar bijection for the tempered dual of $G$ in \eqref{eq-description-of-tempered-dual-via-im-inf-ch}.  The two bijections are the foundation of Afgoustidis's Mackey bijection \cite{AfgoustidisMackeyBijection}.
\end{remark}

\subsection{Imaginary part of the infinitesimal character for the motion group} 
 
The vector subspace $\mathfrak{s}\subseteq\mathfrak{g}$, considered as an abelian Lie algebra, is a Lie subalgebra of the Lie algebra of the Cartan motion group $G_0$, and there is an inclusion
\[
    \operatorname{Sym}(\mathfrak{s})^K\longrightarrow \mathcal{Z}(\mathfrak{g}_0)^K .
\]
Often, but not always, this is an isomorphism.  In any case, each $T\in \operatorname{Sym}(\mathfrak{s})^K$ determines a continuous map 
\begin{equation}
    \label{eq-def-inf-ch-sub-t-for-motion-group}
\InfCh_T\colon \widehat G_0 \longrightarrow \C
\end{equation}
by the formula 
$
\InfCh_T(\pi) \cdot I = \pi (T)$ 
(action on smooth vectors).  

The map \eqref{eq-def-inf-ch-sub-t-for-motion-group} is easy to compute: identifying the element $T\in \operatorname{Sym}(\mathfrak{s})^K$ with a $K$-invariant polynomial function on $\mathfrak{s}^*$ we have 
\[
\InfCh_T(\rho_{\tau,\nu}) = T(\nu).
\]
\begin{lemma}\label{lem-continuity-inf-ch-for-g-0}
\begin{equation}
\label{eq-def-inf-ch-on-unitary-dual-for-motion-group}
\begin{gathered}
\InfCh  \colon \widehat G_0 \longrightarrow i \mathfrak{s}^*/K
\\
\InfCh (\rho_{\tau, \nu})   = i\nu \in i\mathfrak{s}^* /K,
\end{gathered} 
\end{equation}
is continuous.
\end{lemma}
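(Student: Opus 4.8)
The plan is to reduce the continuity of $\InfCh$ to the continuity of the scalar maps $\InfCh_T$, $T\in\Sym(\mathfrak{s})^K$, which is already available: since $\Sym(\mathfrak{s})^K\subseteq\mathcal{Z}(\mathfrak{g}_0)$, each $\InfCh_T$ is continuous by Lemma~\ref{lem-continuity-of-inf-ch} applied to the Lie group $G_0$, and by the computation recorded just above it takes the value $T(\nu)$ on $\rho_{\tau,\nu}$. Thus $\InfCh_T$ is exactly the composition of $\InfCh\colon\widehat G_0\to i\mathfrak{s}^*/K$ with the continuous function on $i\mathfrak{s}^*/K$ induced by the $K$-invariant polynomial $T$. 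Note that one cannot instead argue stratum by stratum along $\widehat G_0\cong\bigsqcup_I\widehat K_I\times\mathfrak{a}^*_{I,+}$: the topology of $\widehat G_0$ is not the disjoint-union topology, since a representation $\rho_{\tau,\nu}$ with $\nu$ on a wall lies in the closure of higher strata. The globally defined central operators $T$ are precisely what lets us sidestep this.

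Next I would recall the standard description of the orbit space of a compact linear group. The form $B$ restricts to a $K$-invariant inner product on $\mathfrak{s}$, hence on $\mathfrak{s}^*$; write $q_0\in\Sym(\mathfrak{s})^K$ for the associated quadratic polynomial $\nu\mapsto\|\nu\|^2$. By Hilbert's finiteness theorem $\Sym(\mathfrak{s})^K$ is generated by finitely many polynomials $p_1,\dots,p_N$, and $q_0$ is a polynomial in them; hence the preimage under $p=(p_1,\dots,p_N)\colon\mathfrak{s}^*\to\R^N$ of a compact set is closed and bounded, so $p$ is proper. Since $K$-invariant polynomials separate $K$-orbits in $\mathfrak{s}^*$ (average a polynomial Stone--Weierstrass approximant separating two disjoint compact orbits), $p$ descends to a continuous, proper, injective map $\mathfrak{s}^*/K\to\R^N$, which is therefore a homeomorphism onto a closed subset. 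In particular $i\mathfrak{s}^*/K$ carries the initial topology for the family of functions induced by $\Sym(\mathfrak{s})^K$.

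Combining the two observations completes the proof: $\InfCh$ is continuous because its composition with each member of a topology-defining family of functions on the target is continuous. On nets this reads: if $\rho_{\tau_\alpha,\nu_\alpha}\to\rho_{\tau,\nu}$ in $\widehat G_0$, then $T(\nu_\alpha)\to T(\nu)$ for every $T\in\Sym(\mathfrak{s})^K$; taking $T=q_0$ shows the $\nu_\alpha$ lie in a fixed compact set, so every subnet has a further subnet along which $\nu_\alpha\to\mu$ for some $\mu\in\mathfrak{s}^*$ with $T(\mu)=T(\nu)$ for all invariant $T$, whence $\mu\in K\nu$; therefore $i\nu_\alpha\to i\nu$ in $i\mathfrak{s}^*/K$.

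I do not expect a serious obstacle here. The only point needing care is the interplay between the Fell topology on $\widehat G_0$ and the stratification, handled as above by working with the central operators $T\in\Sym(\mathfrak{s})^K$ rather than with the strata; once that is granted, the remainder is the familiar fact that the quotient of a vector space by a compact linear group embeds properly, via a finite set of generating invariants, as a closed real-algebraic subset of a Euclidean space.
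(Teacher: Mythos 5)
Your proof is correct and follows the same route the paper takes: reduce to the continuity of each $\InfCh_T$ via Lemma~\ref{lem-continuity-of-inf-ch}, then conclude using the fact that $\Sym(\mathfrak{s})^K$ induces the topology on $i\mathfrak{s}^*/K$. The paper states this as a one-line deduction from Lemma~\ref{lem-continuity-of-inf-ch}; you have simply made explicit the standard Hilbert-invariant-theory argument (properness via $q_0$, orbit separation) that the paper leaves implicit, and your warning about why a stratum-by-stratum argument would fail is a correct and worthwhile observation.
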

\begin{proof}
This follows from Lemma~\ref{lem-continuity-of-inf-ch}. 
\end{proof}

\begin{remark} 
As in Remark \ref{remark-embed-into-g-center-dauns-hofmann}, the Dauns-Hofmann theorem allows us to embed $C_0(i\lie{s}^*/K)$ into the center of the multiplier algebra of $C^*(G_0)$. However, the Dauns-Hofmann theorem is not necessary as Fourier theory \cite[Thm.\,3.2]{Higson08} provides an isomorphism
\[
C^*_r (G_0)\cong C\bigl (i\mathfrak{s}^*, \mathfrak{K}(L^2(K))\bigr )^K
\]
under which the action of  $C_0(i\mathfrak{s}^*/K)$  is simply pointwise multiplication.
\end{remark}

Since the inclusion of $i \mathfrak{a}_{\min}^*$ into $i \mathfrak{s}^*$  as imaginary-valued linear functionals vanishing on the orthogonal complement of $\mathfrak{a}_{\min}$ induces a homeomorphism 
\[
i \mathfrak{a}^*_{\min} / W(\mathfrak{g},\mathfrak{a}_{\min}) \stackrel \cong \longrightarrow i \mathfrak{s}^*/ K ,
\]
we can view $\InfCh(\pi)$ as an element of $ i\mathfrak{a}_{\min}^*/ W(\mathfrak{g},\mathfrak{a}_{\min})$, and so obtain from \eqref{eq-def-inf-ch-on-unitary-dual-for-motion-group} a continuous map
\begin{equation}
    \label{eq-def-inf-ch-on-unitary-dual-for-motion-group-2}
\InfCh  \colon \widehat G_0 \longrightarrow i\mathfrak{a}_{\min}^*/ W(\mathfrak{g},\mathfrak{a}_{\min}).
\end{equation}
A comparison between this and the map $\ImInfCh$ that we defined in the reductive case, see \eqref{eq-im-inf-ch-map}, makes it reasonable to also use the notation $\ImInfCh$ for the map in \eqref{eq-def-inf-ch-on-unitary-dual-for-motion-group-2}, which we shall do from now on.  In other words, we are effectively declaring the real part of the infinitesimal character to be always zero on $\widehat{G}_0$. Thus 
\begin{equation}
    \label{eq-def-im-inf-ch-on-unitary-dual-for-motion-group}
\ImInfCh(\rho_{\tau,\nu}):=i\nu \in i\mathfrak{a}_{\min}^*/ W(\mathfrak{g},\mathfrak{a}_{\min}) \cong i\mathfrak{a}^*_{\dom}.
\end{equation}

\subsection{Subquotients of the motion  group C*-algebra}

Using the continuous map $\ImInfCh$ just defined on $\widehat{G}_0$, we can now repeat the constructions of Section~\ref{subsec-filtration-of-c-star-g-by-ideals} more or less word for word, with $G$ replaced by $G_0$.

\begin{definition}
\label{def-ideals-and-quotients-of-the-reduced-c-star-algebra-g-0}
We shall denote by 
    \[
    C^*(G_0;U_I)\triangleleft C^* (G_0)
    \]
the $C^*$-algebra ideal in $C^* (G_0)$ associated to the open subset of the unitary dual consisting of representations whose infinitesimal character belongs to the open subset $U_I\subseteq \mathfrak{a}_{\dom}^*$  in Definition~\ref{def-ideals-and-quotients-of-the-reduced-c-star-algebra}. We shall denote by $C^* (G_0; I)$ the quotient $C^*$-algebra associated to the closed subset $\mathfrak{a}^*_{\min,I}\subseteq U_I$, and by 
    \[
    \rho_I\colon C^*(G_0;U_I)\longrightarrow C^* (G_0; I)
    \]
the quotient mapping.
\end{definition}

\begin{theorem}
\label{thm-structure-of-g-0-component-algebra-prelim}
For every $ \tau\in \widehat K_I$ there is a unique surjective $C^*$-algebra morphism
    \[
    \rho_\tau \colon C^* (G_0;I)\longrightarrow  C_0\bigl (\mathfrak{a}^*_{I,+}, \mathfrak{K} (\Ind_{K_I}^K H_{\tau})\bigr )
    \]
such that   
\[
\rho_\tau(\rho_I(a))(\nu) =  \rho_{\tau,\nu}(a)
\]
for every $\nu\in \mathfrak{a}^*_{I,+}$ and every $a\in C^* (G_0;U_I)$.
\qed
\end{theorem}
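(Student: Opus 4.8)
The plan is to follow, almost verbatim, the proof of Theorem~\ref{thm-surjective-morphism-from-the-i-subquotient-of-g} in the reductive case, with $C^*_r(G)$ replaced by $C^*_r(G_0)$ and the cuspidal principal series replaced by the Mackey representations $\rho_{\tau,\nu}$ of Definition~\ref{def-rho-of-sigma-nu}. The feature that makes the argument go through cleanly is that, for a fixed $\tau\in\widehat K_I$, all of the representations $\rho_{\tau,\nu}$ with $\nu\in\mathfrak{a}^*_{I,+}$ are realized on the \emph{same} Hilbert space $\Ind_{K_I}^K H_\tau=L^2(K,H_\tau)^{K_I}$, so the prescription $a\mapsto\bigl(\nu\mapsto\rho_{\tau,\nu}(a)\bigr)$ is a sensible candidate for $\rho_\tau\circ\rho_I$. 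Thus I would: (1) show that this prescription defines a $*$-homomorphism $F\colon C^*(G_0;U_I)\to C_0\bigl(\mathfrak{a}^*_{I,+},\mathfrak{K}(\Ind_{K_I}^K H_\tau)\bigr)$; (2) show that $F$ kills $\ker\rho_I$, hence factors as $\rho_\tau\circ\rho_I$; and (3) show that $\rho_\tau$ is surjective.

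For step (1), the substance is that $\nu\mapsto\rho_{\tau,\nu}(a)$ is a norm-continuous, compact-operator-valued function of $\nu$ that vanishes at infinity on the locally compact space $\mathfrak{a}^*_{I,+}$. For $a\in C^\infty_c(G_0)$ one computes $\rho_{\tau,\nu}(a)$ directly from the two formulas in Definition~\ref{def-rho-of-sigma-nu}: it is the integral operator on $L^2(K,H_\tau)^{K_I}$ whose kernel is built from the Euclidean Fourier transform of $a(k,\cdot)\in C^\infty_c(\mathfrak{s})$, evaluated at a point depending linearly, through the coadjoint action, on $\nu$. This kernel is smooth --- so the operator is compact (indeed Hilbert--Schmidt) --- it depends continuously on $\nu\in\mathfrak{s}^*$, and it decays rapidly as $\|\nu\|\to\infty$ by the Riemann--Lebesgue lemma. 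The same formula shows that $\nu\mapsto\rho_{\tau,\nu}(a)$ extends continuously to the closure $\overline{\mathfrak{a}^*_{I,+}}$ in $\mathfrak{a}^*_{\dom}$, with value $\rho_{\tau,\nu_\infty}(a)$ at a boundary point; every $\nu_\infty\in\overline{\mathfrak{a}^*_{I,+}}\setminus\mathfrak{a}^*_{I,+}$ satisfies $\langle\nu_\infty,\beta\rangle=0$ for some $\beta\notin I$, so $\nu_\infty\notin U_I$, so all irreducible constituents of $\rho_{\tau,\nu_\infty}$ lie outside the spectrum of the ideal $C^*(G_0;U_I)$, whence $\rho_{\tau,\nu_\infty}$ annihilates $C^*(G_0;U_I)$ and $F(a)$ vanishes toward that part of the boundary. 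Taking norm limits over a dense set of $a$ then yields $F$ on all of $C^*(G_0;U_I)$; it is a $*$-homomorphism because all operations are computed pointwise in $\nu$. (Alternatively, one can extract all of this from the Fourier isomorphism $C^*_r(G_0)\cong C\bigl(i\mathfrak{s}^*,\mathfrak{K}(L^2(K))\bigr)^K$ recalled after Lemma~\ref{lem-continuity-inf-ch-for-g-0}, which already exhibits $\{\rho_{\tau,\nu}\}$ as a continuous field; compare the proof of Lemma~\ref{lem-mackey-embedding} and \cite[Lemma~4.10]{CCH16}.)

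For step (2): by elementary $C^*$-algebra theory each $\rho_{\tau,\nu}$ with $\nu\in\mathfrak{a}^*_{I,+}$, restricted to the ideal $C^*(G_0;U_I)$, is an irreducible representation whose imaginary part of the infinitesimal character lies in $\mathfrak{a}^*_{I,+}$, so it factors as $\overline{\rho}_{\tau,\nu}\circ\rho_I$ for some representation $\overline{\rho}_{\tau,\nu}$ of $C^*(G_0;I)$; hence $\ker\rho_I\subseteq\ker F$, and $F$ factors uniquely as $F=\rho_\tau\circ\rho_I$, which is exactly the asserted formula. Uniqueness of $\rho_\tau$ is immediate from surjectivity of $\rho_I$. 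For step (3), surjectivity follows from Dixmier's Stone--Weierstrass theorem \cite[Thm.~11.1.8]{DixmierEnglish}, exactly as in the proof of Theorem~\ref{thm-surjective-morphism-from-the-i-subquotient-of-g}: for each $\nu\in\mathfrak{a}^*_{I,+}$ the image $\rho_{\tau,\nu}(C^*(G_0;U_I))$ is a nonzero ideal of $\mathfrak{K}(\Ind_{K_I}^K H_\tau)$ --- the image of the irreducible $\rho_{\tau,\nu}$ contains the compact operators, since $\rho_{\tau,\nu}(a)$ is compact for $a\in C^\infty_c(G_0)$, and it is nonzero on the ideal because $\nu\in U_I$ --- hence it equals $\mathfrak{K}(\Ind_{K_I}^K H_\tau)$; and distinct $\nu$ give inequivalent $\rho_{\tau,\nu}$ by Mackey's classification, so the image of $\rho_\tau$ has full fibers and separates the irreducible representations of $C_0\bigl(\mathfrak{a}^*_{I,+},\mathfrak{K}(\Ind_{K_I}^K H_\tau)\bigr)$ together with $0$.

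The main obstacle is step (1): the claim that $\nu\mapsto\rho_{\tau,\nu}(a)$ is genuinely a $C_0$-function valued in compact operators. This is flagged in the paper itself, via \cite[Lemma~4.10]{CCH16} and the proof of Lemma~\ref{lem-mackey-embedding}, and its content is twofold --- a Riemann--Lebesgue estimate controlling the behaviour as $\|\nu\|\to\infty$, together with the observation that the ``extra'' boundary faces of $\mathfrak{a}^*_{I,+}$ inside $\mathfrak{a}^*_{\dom}$ cause no trouble precisely because we work inside the ideal $C^*(G_0;U_I)$, which is annihilated by the representations attached to those faces. Everything else is either formal or is quoted directly from the reductive case.
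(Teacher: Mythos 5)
Your proof is correct and follows the same route the paper intends: the statement is left with a \verb|\qed| as a direct analogue of Theorem~\ref{thm-surjective-morphism-from-the-i-subquotient-of-g}, whose proof you transpose faithfully (pointwise definition via $\rho_{\tau,\nu}$, $C_0$-ness via Riemann--Lebesgue and the Fourier picture of $C^*_r(G_0)$, surjectivity via Dixmier's Stone--Weierstrass). The extra care you take --- distinguishing the two kinds of ``infinity'' of $\mathfrak{a}^*_{I,+}$ (genuine $\|\nu\|\to\infty$ versus the faces $\langle\nu,\beta\rangle\to 0$, $\beta\notin I$, where vanishing comes from $a$ lying in the ideal $C^*(G_0;U_I)$), and explaining why $F$ descends through $\rho_I$ --- fills in exactly the details the paper elides.
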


\begin{theorem}
\label{thm-structure-of-g-0-component-algebra}
There is a unique $C^*$-algebra isomorphism
    \[
     C^* (G_0;I)\stackrel \cong\longrightarrow \bigoplus _{\tau\in \widehat K_{I}} C_0\bigl (\mathfrak{a}^*_{I,+}, \mathfrak{K} (\Ind_{K_I}^K H_\tau)\bigr )
    \]
whose composition with the projection onto the $\tau$-summand in the direct sum is the morphism $\rho_\tau$ from Theorem~\textup{\ref{thm-structure-of-g-0-component-algebra-prelim}}. \qed
\end{theorem}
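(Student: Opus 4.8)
The plan is to transcribe, nearly word for word, the proof of the analogous Theorem~\ref{thm-i-subquotient-of-g-isomorphism} for the reductive group: the isomorphism is assembled from the surjections $\rho_\tau$ of Theorem~\ref{thm-structure-of-g-0-component-algebra-prelim}, once one knows that their supports are pairwise disjoint clopen subsets of $\widehat{C^*(G_0;I)}$ that exhaust the whole spectrum.

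First I would pin down the spectrum of $C^*(G_0;I)$, repeating the argument of the lemma following Definition~\ref{def-ideals-and-quotients-of-the-reduced-c-star-algebra} with $G$ replaced by $G_0$. The ideal $C^*(G_0;U_I)\triangleleft C^*(G_0)$ has for its spectrum the open set of $[\rho]\in\widehat G_0$ with $\ImInfCh(\rho)\in U_I$, and passing to the quotient $C^*(G_0;I)$ restricts this to the closed subset where $\ImInfCh(\rho)\in\mathfrak{a}_{I,+}^*$. Combining the Mackey description $\widehat G_0\cong\bigsqcup_{J\subseteq S}\widehat K_J\times\mathfrak{a}_{J,+}^*$ with the formula $\ImInfCh(\rho_{\tau,\nu})=i\nu$ from \eqref{eq-def-im-inf-ch-on-unitary-dual-for-motion-group}, this subset is exactly $\widehat K_I\times\mathfrak{a}_{I,+}^*$; since $K_I$ is compact, $\widehat K_I$ is discrete, so the spectrum is the topological disjoint union $\bigsqcup_{\tau\in\widehat K_I}\bigl(\{\tau\}\times\mathfrak{a}_{I,+}^*\bigr)$ of copies of the locally compact Hausdorff space $\mathfrak{a}_{I,+}^*$.

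Next, for each $\tau\in\widehat K_I$ the surjection $\rho_\tau$ of Theorem~\ref{thm-structure-of-g-0-component-algebra-prelim} maps $C^*(G_0;I)$ onto $C_0\bigl(\mathfrak{a}_{I,+}^*,\mathfrak{K}(\Ind_{K_I}^K H_\tau)\bigr)$, an algebra with spectrum $\mathfrak{a}_{I,+}^*$; its induced map of spectra sends $\nu$ to $[\rho_{\tau,\nu}]$, identifying $\mathfrak{a}_{I,+}^*$ with the clopen slice $\{\tau\}\times\mathfrak{a}_{I,+}^*$. Since the slices partition the spectrum into clopen pieces and that spectrum is their topological disjoint union, the characteristic functions of the slices give (via the Dauns-Hofmann theorem) central projections in the multiplier algebra of $C^*(G_0;I)$ exhibiting it as the $c_0$-direct sum $\bigoplus_\tau J_\tau$ of the ideals $J_\tau$ with spectrum $\{\tau\}\times\mathfrak{a}_{I,+}^*$; equivalently, this $c_0$-decomposition follows from Dixmier's Stone-Weierstrass theorem and the disjointness of supports, exactly as in the proof of Theorem~\ref{thm-surjective-morphism-from-the-i-subquotient-of-g} and \cite[Lemma~5.14]{CCH16}. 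Restricted to $J_\tau$, the morphism $\rho_\tau$ is a surjection inducing a homeomorphism of spectra, hence an isomorphism $J_\tau\stackrel{\cong}{\longrightarrow}C_0\bigl(\mathfrak{a}_{I,+}^*,\mathfrak{K}(\Ind_{K_I}^K H_\tau)\bigr)$; assembling these gives the stated isomorphism, and uniqueness is forced since the components $\rho_\tau$ are prescribed.

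The one point deserving care — genuinely routine in this setting, by contrast with the reductive case — is the $c_0$-direct sum decomposition: what is used is not merely that each slice is clopen but that $\widehat{C^*(G_0;I)}$ is the \emph{topological} disjoint union of the slices, which holds because $\widehat K_I$ is discrete. One can also bypass all of this by working directly from the Fourier isomorphism $C^*_r(G_0)\cong C\bigl(i\mathfrak{s}^*,\mathfrak{K}(L^2(K))\bigr)^K$ of \cite[Thm.~3.2]{Higson08}, decomposing $L^2(K)$ according to $K_I$-isotypic content. I anticipate no real obstacle; the argument is the verbatim copy of the proof of Theorem~\ref{thm-i-subquotient-of-g-isomorphism} promised before the statement.
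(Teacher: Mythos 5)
Your proposal reaches the right conclusion and is broadly in line with what the paper has in mind: the paper gives no proof here (a bare $\qed$), expecting the reader to repeat the argument for Theorem~\ref{thm-i-subquotient-of-g-isomorphism} with $G$ replaced by $G_0$, which is exactly the ``disjoint supports plus Dixmier Stone--Weierstrass'' route that you cite as your alternative. But I would push back on the logic of your primary argument. The step ``since $K_I$ is compact, $\widehat K_I$ is discrete, so the spectrum is the topological disjoint union $\bigsqcup_\tau \{\tau\}\times\mathfrak{a}^*_{I,+}$'' is not automatic. The Mackey description of $\widehat G_0$ is a \emph{set-theoretic} bijection; the hull--kernel topology on $\widehat{C^*(G_0;I)}$ is not a priori the product of the discrete topology on $\widehat K_I$ and the Euclidean topology on $\mathfrak{a}^*_{I,+}$, and the discreteness of $\widehat K_I$ by itself does not tell you that the $\tau$-slices are open in $\widehat{C^*(G_0;I)}$. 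In fact the cleanest way to see that the slices are clopen is as a \emph{consequence} of the isomorphism you are trying to prove, so the Dauns--Hofmann argument as you arrange it is circular.

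The fix is exactly what you list as an ``equivalent'' version: first show that $\oplus_\tau\rho_\tau$ lands in the $c_0$-direct sum (by the same $K$-bifinite density argument used in the proof of Lemma~\ref{lem-mackey-embedding}, which shows that for a dense set of $f_0$ only finitely many $\rho_\tau(f_0)$ are nonzero), then use injectivity from separation of points and surjectivity from Dixmier's Stone--Weierstrass theorem together with the pairwise inequivalence of the $\rho_{\tau,\nu}$, as in \cite[Lemma~5.14]{CCH16}. The identification of the spectrum as a topological disjoint union then drops out as a corollary. One small correction to your commentary: the contrast you draw with the reductive case is overstated, because Corollary~\ref{cor-discrete-topology-on-tempiric dual} shows that $\widehat M_{I,\mathrm{tempiric}}$ is also discrete, so the reductive argument is in the same position and there is nothing ``genuinely more routine'' about the motion group case on this point. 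The Fourier-isomorphism route you mention is a legitimate bypass.
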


\section{Scaling the imaginary part of the infinitesimal character}
\label{sec-deformation-family}

The Mackey deformation family \cite{Higson08, AfgoustidisConnesKasparov} is a smooth one-parameter family of Lie groups that  links a real reductive group $G$ to its Cartan motion group. 
In this section we shall extend the constructions of the previous sections---the imaginary part of the infinitesimal character and the associated filtration of the reduced group $C^*$-algebra by ideals---to the context of the Mackey family.

\subsection{Deformation families of Lie groups}
\label{subsec-deformation-family}

Let $G$ be a Lie group and let  $K$ be a compact subgroup of $G$ (or, for nearly everything that follows, any closed subgroup).  The deformation to the normal cone construction, applied to $K\subseteq G$, yields a smooth family of Lie groups  $  \{G_t\}_{t\in[0,\infty)}$ with
\begin{equation}
\label{eq:G_t}
  G_t = 
  \begin{cases}
    G & t > 0 \\
    K \ltimes (\lie{g/k}) & t=0.
  \end{cases}
\end{equation}
See \cite[Sec.\,6.2]{Higson08} for this particular instance of the deformation to the normal cone construction. 
We shall use the following features:  

\begin{enumerate}[\rm (i)]

\item   The disjoint union $\GG= \sqcup \{ G_t : t \geq 0\}$ is equipped with a smooth manifold (with boundary) structure for which the  map 
\[
  \GG  \to [0, \infty) ,\qquad G_t\ni g\mapsto t\in [0, \infty)
\]
 is a submersion.

    \item The fiberwise multiplication, inverse and unit  maps $\GG\times_{[0, \infty)} \GG\to \GG$,  $\GG\to\GG$ and $[0, \infty)\to \GG$ are  smooth maps.
    \item 
    If $V$ is a finite-dimensional real vector space, and if $E\colon V\to G$ is a smooth map for which $E(0)=e$, and for which  the map
    \[
    K \times V \to G,\qquad  (k,v) \mapsto k\cdot  E(v)
    \]
    is a diffeomorphism, then the map
    \[
      K \times V \times [0, \infty) \to \GG; \qquad 
      (k,v,t) \mapsto 
      \begin{cases}
          k\cdot  E(tv)\in G_t & t > 0 \\
          (k,  dE(v)) \in G_0 & t=0
      \end{cases}
    \]
    is a diffeomorphism, where $dE\colon V\to \mathfrak{g}/\mathfrak{k}$, is the composition of the derivative of $E$ at $0\in V$ with the projection  $\mathfrak{g}\to \mathfrak{g}/\mathfrak{k}$ (for this, we use $G_0 = K \ltimes \mathfrak{g} / \mathfrak{k}$).
\end{enumerate}

\begin{remark}
A diffeomorphism as in point (iii) exists in all of the cases of interest to us.  Specifically, when $G$ is a real reductive group and $K$ a maximal compact subgroup, we have the Cartan decomposition.  Likewise, when $P$ is a parabolic subgroup and $K$ is a maximal parabolic of $P$, we can combine the Langlands decomposition of the Levi factor with the exponential map on the nilpotent radical.  
\end{remark}

Equip the groups $G_t$ with Haar measures $\mu_t$, as follows.  Fix a Haar measure $\mu{=}\mu_1$ on $G{=} G_t$, and then define 
\begin{equation}
\label{eq-rescaled-Haar-measure}
    \mu_t {=} t^{-d} \mu \qquad \text{for }t > 0,
\end{equation} where $d {=} \dim (G/K)$.  As for $\mu_0$, the measure $\mu$ with which we started determines a Lebesgue measure on $\mathfrak{g}$, which may be written as the product of a Lebesgue measure $\mu_{\mathfrak{k}}$ on $\mathfrak{k}$ and a Lebesgue measure $\mu_{\mathfrak{s}}$ on $\mathfrak{s}$ (the summands in the Cartan decomposition $\mathfrak{g} = \mathfrak{k}\oplus \mathfrak{s}$).  The measure $\mu_{\mathfrak{k}}$ determines a Haar measure $\mu_K$ on $K$, and we define $\mu_0$ to be the product of $\mu_K$ and $\mu_{\mathfrak{s}}$ on $G \cong K\ltimes \mathfrak{s}$.

The measures $\mu_t$ vary smoothly with $t$ in the sense that if $f$ is a smooth and compactly supported function on $\GG$, then $\int_{G_t} f(g_t) \, d \mu_t g_t$ is a smooth function of $t$.  Moreover the operations of fiberwise convolution and adjoint  define   $*$-algebra structure on the space $C_c^\infty (\GG)$ of all smooth and compactly supported functions on $\GG$.

\begin{definition}
\label{def-family-group-c-star-algebra}
    We shall denote by $C^*_r(\GG)$ the $C^*$-algebra completion of the $*$-algebra $C_c^\infty (\GG)$ in the norm 
    \[
    \| f\| = \sup \bigl \{ \, \| f_t\|_{C^*_r (G_t)} : t\geq 0 \,\bigr \} ,
    \]
    where $f_t$ is the restriction of $f$ to $G_t\subseteq \GG$.
\end{definition}

\begin{remark}
\label{rem-continuous-field-from-DNC}
    It is shown in \cite[Lemma~6.13]{Higson08} that for $f\in C_c^\infty (\GG)$ the norms $\| f_t\|_{C^*_r (G_t)}$ vary continuously with $t$, so that we obtain from the constructions above a continuous field of $C^*$-algebras $\{ C^*_r (G_t)\} _{t \geq 0}$, in the sense of \cite[Ch.~10]{DixmierEnglish}.
\end{remark}

The spectrum of $C^*_r (\GG)$  is the disjoint union
\[
 \widehat{\GG}_{\mathrm{tempered}} : =  \widehat{C^*_r(\GG)} = \bigsqcup_{t\geq 0} \widehat{G}_{t,\temp} .
\]
We shall equip it with the usual hull-kernel topology it carries as the spectrum of a $C^*$-algebra.
But for  the most part, we shall only need the following simpler notion of convergence among representations of the groups $G_t$.

\begin{definition}
\label{def:family_of_reps}
  By a  \emph{continuous family of unitary representations} of $\GG$ on a fixed Hilbert space $H$ we shall mean a strongly continuous map $\pi:\GG\to U(H)$ whose restriction $\pi_t$ to each $G_t$ is a unitary representation.
\end{definition}

\begin{lemma}
If $\{\pi_t\}$ is a continuous family of representations in the above sense, and if $\pi_t$ is irreducible for all $t >0$, then every irreducible component of $\pi_0$ is a limit   of $\{\pi_t\}$ in $\widehat{\GG}$ as $t\to 0$. \qed   
\end{lemma}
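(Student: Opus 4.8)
The plan is to pass immediately to the $C^*$-algebra $C^*_r(\GG)$ and to use the description of convergence in the spectrum of a $C^*$-algebra in terms of matrix coefficients, i.e.\ the $C^*$-algebraic version of the criterion recalled in \eqref{eq-topology-on-the-dual-from-matrix-coefficients} (see \cite[Ch.~3]{DixmierEnglish}). For each $t > 0$ the representation $\pi_t$ is an irreducible unitary representation of $G_t = G$, and hence determines an irreducible representation of $C^*_r(\GG)$ by the rule $a \mapsto \pi_t(a_t)$, that is, evaluation at $t$ followed by the integrated form; likewise any irreducible component of $\pi_0$, acting on a closed $\pi_0$-invariant subspace $H_0 \subseteq H$, determines an irreducible representation of $C^*_r(\GG)$ through evaluation at $t = 0$. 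Since $C^*_r(\GG)$ is by Definition~\ref{def-family-group-c-star-algebra} the completion of $C_c^\infty(\GG)$, and since $\|\pi(a)\| \le \|a\|$ for every representation $\pi$, it is enough to verify the matrix-coefficient convergence on the dense $*$-subalgebra $C_c^\infty(\GG)$.

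First I would fix a single irreducible component of $\pi_0$ as above, choose a unit vector $v \in H_0$, and use this same vector $v$ (now viewed in the full Hilbert space $H$) as the test vector for every $\pi_t$ with $t > 0$. Because $v$ lies in the $\pi_0$-invariant subspace $H_0$, for $a \in C_c^\infty(\GG)$ the operator $\pi_0(a_0) = \int_{G_0} a_0(g)\,\pi_0(g)\,d\mu_0 g$ preserves $H_0$, so $\langle v, \pi_0(a_0) v\rangle$ coincides with the corresponding matrix coefficient of the irreducible subrepresentation carried by $H_0$; this pins down the target vector state.

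The key step is then to prove that
\[
\lim_{t \to 0^{+}} \langle v, \pi_t(a_t) v\rangle = \langle v, \pi_0(a_0) v\rangle \qquad \text{for every } a \in C_c^\infty(\GG).
\]
I would write $\langle v, \pi_t(a_t) v\rangle = \int_{G_t} a_t(g)\,\langle v, \pi_t(g) v\rangle\, d\mu_t g$ and transport all of these integrals to the fixed domain $K \times V$ by means of the diffeomorphism in point~(iii) of Section~\ref{subsec-deformation-family}. In these coordinates: the function coming from $a$ is continuous and compactly supported on $K \times V \times [0,\infty)$, so all the integrands are supported in one fixed compact set; the function $(g,t) \mapsto \langle v, \pi(g) v\rangle$ is continuous on $\GG$ by the strong continuity of $\pi \colon \GG \to U(H)$; and the densities of the Haar measures $\mu_t$ vary smoothly with $t$, including at $t = 0$, which is exactly the compatibility built into the definition of $\mu_0$ together with \eqref{eq-rescaled-Haar-measure}. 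A dominated-convergence argument now gives the displayed limit. Together with the previous paragraph and the matrix-coefficient criterion for convergence in $\widehat{C^*_r(\GG)}$, this shows that the chosen irreducible component of $\pi_0$ is a limit of $\{\pi_t\}$ in $\widehat{\GG}$ as $t \to 0$.

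The main obstacle is precisely this last convergence of the integrated operators across the degenerating family: since the groups $G_t$ themselves are changing, one has to use the smooth manifold structure on $\GG$ and the smoothly varying Haar measures in order to see that the matrix-coefficient integrals over $G_t$ converge to the one over $G_0$. Everything else --- identifying unitary representations of the groups $G_t$ with representations of $C^*_r(\GG)$, the density of $C_c^\infty(\GG)$, and the passage from matrix-coefficient convergence to convergence in the spectrum --- is routine $C^*$-algebra theory, for which I would simply cite \cite[Ch.~3]{DixmierEnglish}.
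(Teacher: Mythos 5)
Your proof is correct and is the natural argument; the paper states this lemma without a printed proof (the \qed immediately after the statement signals that the authors regard it as routine), so there is no written argument in the paper to compare against. The two ingredients you isolate are exactly the right ones. For each $t\ge 0$ your chosen unit vector $v\in H_0$ gives a pure vector state of the irreducible representation $a\mapsto\pi_t(a_t)$ of $C^*_r(\GG)$, and the $C^*$-algebraic form of the matrix-coefficient criterion (the easy direction of \cite[Prop.~3.4.11]{DixmierEnglish}, namely that weak-$*$ convergence of pure vector states of irreducible representations forces convergence of those representations in the spectrum) reduces everything to the limit you state. The convergence of the integrals $\int_{G_t}a_t(g)\langle v,\pi_t(g)v\rangle\,d\mu_t g$ as $t\to 0$ then follows by dominated convergence once everything is transported to the fixed chart $K\times V\times[0,\infty)$ from property~(iii) of Section~\ref{subsec-deformation-family}: there the matrix coefficient is continuous (by strong continuity of $\pi$ on $\GG$), the function coming from $a\in C^\infty_c(\GG)$ has uniformly compact support in $K\times V$, and the Haar densities $\mu_t$ vary smoothly in $t$ including at $t=0$. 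This is exactly the pattern the paper itself uses in the proof of Proposition~\ref{prop:continuous_induction} and in identifying the $t\to 0$ limit in the proof of Theorem~\ref{thm-continuous-inf-ch-on-family-dual}, so your argument fills the gap in the intended way.
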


\begin{example}
\label{ex:restriction_family}
Let $\pi:G \to U(H)$ be a unitary representation.  Define
\[
\begin{cases}
  \pi_t (g)= \pi(g) & t {>} 0 \\
  \pi_0 (k,X) = \pi(k) & t {=}0.
\end{cases}
\]
Then $\{\pi_t\}_{t\in[0,\infty)}$ is a continuous family of representations.
\end{example}

\begin{example}
\label{ex:dilation_family}
Let $P{=}MAN$ be a parabolic subgroup of a real reductive group. Form the deformation to the normal cone $\pmb P$ associated to $P$ and the compact subgroup $K{\cap}M$. Let $\sigma$ be an irreducible unitary representation of $M$ and let $\nu \colon \mathfrak{a} \to \R$ be a linear functional.     Define unitary representations $\lambda_t $ of $P_t$  on the Hilbert space of $\sigma$ by 
\begin{equation*}
  \begin{cases}
     \lambda_t (man) =  e^{i t^{-1} \nu (\log (a))}\cdot  \sigma(m) & t{ > } 0
  \\ 
  \lambda_0(k,X) = e^{i \nu(X)} \cdot \sigma(k) & t {=} 0,
  \end{cases} 
\end{equation*}
where $\nu$ is transferred to a linear functional on  $ \mathfrak{p}/(\mathfrak{k}{\cap} \mathfrak{m})$ by setting it equal to  zero on  the image of $\mathfrak{m}\oplus \mathfrak{n}$.   This  is a continuous family of representations of $\pmb P$.
\end{example}

The following proposition is a special case of a more general principle that unitary  induction takes continuous families to continuous families.

\begin{proposition}
\label{prop:continuous_induction}
Let $P{=}MAN$ be a parabolic subgroup of a real reductive group $G$.  Let $\sigma$ be a unitary  representation of $M$ and let $\nu \in \mathfrak{a}^*$.  The representations 
\[
\pi_t = \begin{cases} \pi_{\sigma, t^{-1} \nu} & t{> }0 \\
\rho_{\sigma,\nu} & t {=}0
\end{cases}
\]
form a continuous family of unitary representations of $\GG$.
\end{proposition}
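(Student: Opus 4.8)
The plan is to realize every $\pi_t$, for $t\ge 0$, on the \emph{single} Hilbert space $\mathcal H = L^2(K,H_\sigma)^{K\cap M}$, and then to check that the resulting map $\GG\to U(\mathcal H)$ is strongly continuous. For $t>0$ this is the usual compact picture of $\pi_{\sigma,t^{-1}\nu}=\Ind_P^G\sigma\otimes\exp(it^{-1}\nu)$, built from the decomposition $G=K\cdot P$ with $K\cap P=K\cap M$. For $t=0$ it is exactly the realization of $\rho_{\sigma,\nu}$ in Definition~\ref{def-rho-of-sigma-nu}: the quotient map $\mathfrak g\to\mathfrak g/\mathfrak k$ carries $\mathfrak p$ isomorphically onto $\mathfrak s$ and $\mathfrak k\cap\mathfrak m$ onto the isotropy group, so $P_0\cong (K\cap M)\ltimes\mathfrak s$, and a one–line induction-in-stages computation shows $\Ind_{P_0}^{G_0}\lambda_0\cong\rho_{\sigma,\nu}$ with $\lambda_0$ as in Example~\ref{ex:dilation_family} (in particular the linear functional extending $\nu$ by zero on the image of $\mathfrak m\oplus\mathfrak n$ is precisely the one used in Definition~\ref{def-rho-of-sigma-nu}). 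The feature I would exploit throughout is that in all of these pictures $K$ acts simply by left translation of functions on $K$, independently of $t$.

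Continuity at a point $(g_0,t_0)$ with $t_0>0$ is routine: near such a point $\GG$ is just $G\times(0,\infty)$, and the compact-picture formula for $\pi_{\sigma,t^{-1}\nu}(g)$ depends continuously on $g$ and on the parameter $t^{-1}\nu$, uniformly on compacta in $K$; joint strong continuity follows for a dense set of continuous sections and hence, by unitarity, on all of $\mathcal H$. The content is continuity at the boundary $t=0$, which simultaneously identifies $\pi_0$ with $\rho_{\sigma,\nu}$. Here I would use the chart of feature (iii) of the deformation family with $E=\exp\vert_{\mathfrak s}$ (legitimate by the Cartan decomposition, as noted after the list of features), so that a net converging to $(g_0,0)=(k_0,v_0)\in G_0$ may be written $g_n=k_n\exp(t_nv_n)\in G_{t_n}$ with $k_n\to k_0$, $v_n\to v_0$, $t_n\to 0^+$. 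Since $\pi_{t_n}(k_n)$ is left translation by $k_n$ — strongly continuous — and each $\pi_{t_n}$ is unitary, it suffices to prove $\pi_t(\exp(tv))f\to\rho_{\sigma,\nu}(v_0)f$ in $\mathcal H$ as $(v,t)\to(v_0,0)$, and by density only for continuous $f$.

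For this last step I would compute the compact-picture action explicitly. Writing $\exp(-tv)k=k\exp(-t\,\Ad_k^{-1}v)$ and decomposing a point $x\in G$ near $K$ as $x=\kappa(x)\,m(x)\exp(H(x))\,n(x)\in K\cdot M\cdot\exp(\mathfrak a)\cdot N$, the ($M$-ambiguity being harmless on $\mathcal H$), a first-order expansion of this decomposition gives, uniformly for $k\in K$ and $v$ in a compact set,
\[
H(\exp(-tv)k)=-t\,\mathrm{pr}_{\mathfrak a}(\Ad_k^{-1}v)+O(t^2),
\]
where $\mathrm{pr}_{\mathfrak a}\colon\mathfrak s\to\mathfrak a$ is the orthogonal projection for $B$; and $m(\exp(-tv)k)\to 1$, $\kappa(\exp(-tv)k)\to k$. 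Consequently the only potentially singular factor in the action, the phase $\exp(-it^{-1}\nu(H(\exp(-tv)k)))$, converges to $\exp(i\,\tilde\nu(\Ad_k^{-1}v_0))$, where $\tilde\nu$ is the extension of $\nu$ to $\mathfrak s$ vanishing on $\mathfrak a^{\perp}\cap\mathfrak s$, i.e.\ on the image of $\mathfrak m\oplus\mathfrak n$ — exactly the phase in Definition~\ref{def-rho-of-sigma-nu} and Example~\ref{ex:dilation_family}. The remaining factors $\exp(-\rho_P(H(\cdot)))$, $\sigma(m(\cdot))^{-1}$ and $f\circ\kappa(\cdot)$ converge to $1$, $1$ and $f$, all uniformly on the compact group $K$, so $(\pi_t(\exp(tv))f)(k)\to(\rho_{\sigma,\nu}(v_0)f)(k)$ uniformly in $k$ and hence in $L^2(K,H_\sigma)$, which finishes the proof.

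The main obstacle is that third paragraph: proving the first-order asymptotics of the (non-minimal) Iwasawa-type decomposition with the uniformity in $k\in K$ needed to pass to the Hilbert-space limit, and verifying that the limiting cocycle matches Definition~\ref{def-rho-of-sigma-nu} on the nose — in particular that the extension-by-zero of $\nu$ and the overall sign come out correctly, which forces one to fix compatible normalizations for induced representations and for the chart $E$. Everything else is bookkeeping with the compact picture. One could instead phrase the whole argument as an instance of the principle that fiberwise unitary induction along $\pmb P\to\GG$ takes continuous families of representations to continuous families, applied to $\{\lambda_t\}$ from Example~\ref{ex:dilation_family}; but that principle is itself proved by precisely the computation above, so little is gained.
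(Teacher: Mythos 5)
Your sketch is essentially correct, and the asymptotic $H(\exp(-tw))=-t\,\operatorname{pr}_{\mathfrak a}(w)+O(t^2)$ together with its uniformity on compacta does go through. But the paper's proof takes a genuinely different and shorter route that avoids precisely the step you flag as the main obstacle. It observes that the deformation family $\pmb P$ associated to $(P,K\cap M)$ embeds in $\GG$ as a smooth closed submanifold, and that the decomposition $g=\kappa(g)\cdot\varphi(g)$ with $\kappa(g)\in K$ and $\varphi(g)\in\pmb P$ extends locally smoothly across $t=0$; then, with $\lambda$ the continuous family on $\pmb P$ from Example~\ref{ex:dilation_family}, the compact-picture formula
\[
(\pi(g)\xi)(k')=\delta^{-1/2}\bigl(\varphi(g^{-1}k')\bigr)\,\lambda\bigl(\varphi(g^{-1}k')\bigr)^{-1}\,\xi\bigl(\kappa(g^{-1}k')\bigr)
\]
is continuous in $g\in\GG$ simply because $\kappa$ and $\varphi$ are locally smooth and $\lambda$ is continuous on $\pmb P$. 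The first-order expansion of the Iwasawa-type decomposition, and the verification that the limiting cocycle matches Definition~\ref{def-rho-of-sigma-nu}, are thereby absorbed into the smooth-manifold structure of $\GG$ and into Example~\ref{ex:dilation_family}, so no asymptotic computation is needed. I would therefore push back gently on your closing remark that the ``induction along $\pmb P\to\GG$'' principle is proved by redoing your expansion and so ``little is gained'': the paper proves it by recognizing $\pmb P$ as a submanifold of $\GG$, which buys the troublesome uniformity in $k$ for free and removes, rather than repackages, your main obstacle.
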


\begin{proof}
Note that $\pmb{P}$ embeds as a smooth closed submanifold of $\GG$. For $g\in \GG$ we have a (non-unique) decomposition
\[
g= \kappa(g)\cdot \varphi(g)
\]
with $\kappa(g)\in K$ and $ \varphi(g)\in \pmb P$.   It is not usually possible to choose $\kappa$ and $\varphi$ to be globally smooth functions, but we can at least choose them to be smooth in a neighbourhood of any point $g\in\GG$. 

Now for  $\xi\in L^2 (K,H_\sigma)^{M\cap K}$ we may write 
\[
  (\pi(g)\xi)(k')
   = \delta^{-1/2}(\varphi (g^{-1}k'))  \cdot \lambda(\varphi(g^{-1}k'))^{-1} \xi(\kappa(g^{-1}k')),
\]
 where $\delta$ is the  modular function for $P$ (a necessary ingredient in unitary induction) and where $\lambda$ is as in Example~\ref{ex:dilation_family}.  This, plus the formula above,  show that $g\mapsto \pi(g)\xi$ is continuous from $g$ to $L^2(K,H_\sigma)$, as claimed.  
\end{proof}

\subsection{The imaginary part of the infinitesimal character for the Mackey deformation}

We turn now to infinitesimal characters in the context of the deformation to the normal cone family.  The fundamental observation underlying all of this is the following Lemma.  For the groups of interest to us, it is a consequence of the condition (iii) on $\GG$ in Section~\ref{subsec-deformation-family}.

\begin{lemma} 
Let $G$ be any Lie group, let $K$ be a compact subgroup of $G$ \textup{(}or indeed any closed subgroup\textup{)}, and let $X$ be a right-invariant vector field on $G$. The family of vector fields $X_t = tX$ on $G_t$, for $t{>} 0$, extends to a smooth family of vector fields on the fibers $G_t$ of the deformation family $\GG$.   If we consider $X$ as an element in $\mathfrak{g}$, then the  vector field $X_0$ on $G_0$ is the right-invariant vector field associated to the image of $X$ in $\mathfrak{g}/\mathfrak{k}$. \qed
\end{lemma}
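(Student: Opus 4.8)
The plan is to verify the claim locally on $\GG$ using the coordinate chart from condition (iii) in Section~\ref{subsec-deformation-family}, and then identify the resulting $t=0$ vector field. Fix a point of $\GG$ and a diffeomorphism $K\times V\to G$, $(k,v)\mapsto k\cdot E(v)$, with $E(0)=e$; this induces, via condition (iii), a chart $K\times V\times[0,\infty)\to\GG$ near the given point. On $G_t$ with $t>0$, the right translation by $g\in G_t$ is carried, in these coordinates, to the map $(k,v)\mapsto$ the coordinates of $k\cdot E(tv)\cdot g$; rescaling the $V$-coordinate by $t$ is exactly what converts right multiplication on $G$ into the limiting operation on $G_0 = K\ltimes(\mathfrak g/\mathfrak k)$. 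So the first step is to write down, in the chart, the flow generated by $tX$ (equivalently, the right multiplication by $\exp_{G}(stX)$ as $s$ varies), pull it back to $K\times V$, and observe that the resulting family of flows on $K\times V$ depends smoothly on $t$ including at $t=0$.

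\textbf{Key steps.} First I would reduce to a local statement, since smoothness of a vector field on $\GG$ is local and the charts of condition (iii) cover a neighborhood of $G_0$ (for $t$ bounded away from $0$ there is nothing to prove, as $tX$ is manifestly smooth in $(g,t)$ on $G\times(0,\infty)$). Second, using the chart, express the one-parameter family of diffeomorphisms $R_{\exp_G(stX)}$ of $G_t$ in the coordinates $(k,v)$: the effect is that $E(tv)\exp_G(stX)$ must be rewritten as $k'(s,t,v)\cdot E(tv'(s,t,v))$, and differentiating at $s=0$ gives the components of $X_t$ in the chart. Third, check that these component functions extend smoothly to $t=0$; this is where condition (iii) does the real work, since it is precisely the assertion that the reparametrized coordinate map is a diffeomorphism of $K\times V\times[0,\infty)$, so the change-of-coordinates formulas are smooth in $t$ down to $t=0$. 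Fourth, evaluate at $t=0$: the $t=0$ fiber is $K\ltimes(\mathfrak g/\mathfrak k)$ with group law $(k_1,Y_1)(k_2,Y_2)=(k_1k_2,\ Y_1+\Ad_{k_1}Y_2)$, and right multiplication by the one-parameter subgroup $(e,s\bar X)$, where $\bar X$ is the image of $X$ in $\mathfrak g/\mathfrak k$, is $(k,Y)\mapsto(k,\ Y+s\,\Ad_k\bar X)$; differentiating at $s=0$ identifies $X_0$ as the right-invariant vector field on $G_0$ determined by $\bar X\in\mathfrak g/\mathfrak k$, which is exactly the asserted conclusion.

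\textbf{Main obstacle.} The only genuine difficulty is bookkeeping with the coordinate changes: one must be careful that ``right-invariant vector field'' is interpreted consistently (so that $tX$ really is the correct rescaling to match the deformation-to-the-normal-cone identification, rather than $t^{-1}X$ or an unrescaled $X$), and that the smooth extension to $t=0$ is genuinely supplied by property (iii) rather than assumed. Once the chart is fixed, the computation is routine: the factor of $t$ multiplying $X$ cancels against the factor $t^{-1}$ implicit in the coordinate $v\mapsto tv$, leaving a family of coordinate vector fields with a well-defined, smooth $t\to 0$ limit. I do not anticipate needing anything beyond property (iii) and the explicit group law on $G_0$, so the proof is short; I would present it as the verification in a single chart followed by the identification of the limit, and then invoke locality to conclude smoothness on all of $\GG$.
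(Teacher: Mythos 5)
Your proposal is essentially the right argument, and it is the argument the paper has in mind: the lemma carries a \qed\ with no written proof, and the sentence preceding it explicitly says that the claim ``is a consequence of the condition (iii) on $\GG$,'' which is exactly the chart you use. Your plan — work locally in the chart $K\times V\times[0,\infty)\to\GG$, express the flow of $tX$ in coordinates, observe that the coordinate expressions extend smoothly to $t=0$ precisely because (iii) asserts the chart is a diffeomorphism of the manifold with boundary, and then read off the $t=0$ limit from the explicit group law on $G_0=K\ltimes(\mathfrak g/\mathfrak k)$ — is the intended, and the natural, route.

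One slip, which is consistent throughout and therefore does not break the structure of the argument but does affect the coordinate computation: the flow of a \emph{right-invariant} vector field is \emph{left} multiplication by the one-parameter subgroup, $\Phi_s(g)=\exp(sX)\,g$, not right multiplication. (Right-invariance means $(R_g)_*X_h=X_{hg}$, and the integral curve through $g$ is $s\mapsto\exp(sX_e)\,g$.) With left multiplication, the $t=0$ computation in the chart $(k,Y)$ gives the constant vector field $(k,Y)\mapsto(0,\bar X)$, which is the right-invariant field on $G_0$ — matching the lemma's conclusion — whereas the formula you wrote, $(k,Y)\mapsto(0,\Ad_k\bar X)$, is the \emph{left}-invariant field for $\bar X$. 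So the fix is simply to replace right multiplication by left multiplication in both the $t>0$ chart computation (decompose $\exp(stX)\cdot k\cdot E(tv)$ as $k'\cdot E(tv')$) and in the identification of the limit; everything else in the argument goes through unchanged.
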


Let us apply this to elements in the  enveloping algebra of $G$.   If $T\in \mathcal{U}(\mathfrak{g}_{\C})$, and if $T$ has order $k$ for the usual increasing filtration on the enveloping algebra (so that $T$ is a sum of products of $k$ or fewer elements from $\mathfrak{g}$), then the family of right-invariant differential operators 
\[
T_t = t^k T \colon C_c^\infty (G_t) \longrightarrow C_c^\infty(G_t)\qquad (t>0)
\]
extends to a smooth family of right-invariant differential operators defined on all of the fibers of $\GG$.  We obtain from this a families version of Lemma~\ref{lem-continuity-of-inf-ch}; we shall formulate it for real reductive groups and the tempered dual only because we have not defined the full unitary dual of $\GG$.  

\begin{lemma} 
\label{lem-continuity-of-inf-ch-in-families}
Let $G$ be a real reductive group and let  $T\in \mathcal{Z}(\mathfrak{g}_{\C})$. If $T$ has order $k$, and if $T_t = t^k T$ for $T{\ne }0$,  then the function 
 \[
 \widehat G_t \ni \pi_t \longmapsto \InfCh_{T_t}(\pi_t)\in \C\qquad (t>0)
 \]
 extends at $t{=}0$ to a continuous function on $\widehat {\GG}_{\mathrm{tempered}}$.
\end{lemma}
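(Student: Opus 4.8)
The plan is to mimic exactly the proof of Lemma~\ref{lem-continuity-of-inf-ch}, but now over the base $[0,\infty)$, using the continuous-field description of the spectrum $\widehat{\GG}_{\mathrm{tempered}}$ from Remark~\ref{rem-continuous-field-from-DNC}. First I would recall that convergence in the spectrum of $C^*_r(\GG)$ is again detected by matrix coefficients: a net $\pi_\alpha$ of irreducible representations of the groups $G_{t_\alpha}$ converges to an irreducible representation $\pi$ of $G_{t_0}$ if and only if there are unit vectors $v_\alpha$, $v$ with $\langle v_\alpha, \pi_\alpha(f_{t_\alpha}) v_\alpha\rangle \to \langle v, \pi(f_{t_0}) v\rangle$ for all $f \in C_c^\infty(\GG)$; the only new ingredient relative to Lemma~\ref{lem-continuity-of-inf-ch} is that now $f$ is a function on the total space $\GG$ and $f_t$ denotes its restriction to the fiber $G_t$. (For $t_\alpha$ staying in $(0,\infty)$ this is just the statement for the fixed group $G$, so the real content is at $t_0 = 0$.)

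Next, set $T_t = t^k T$ for $t > 0$ and observe that, by the preceding discussion (the Lemma on scaling right-invariant differential operators, which is a consequence of condition~(iii) on $\GG$), the family $\{T_t\}_{t>0}$ extends to a smooth family of fiberwise right-invariant differential operators on $\GG$; call the operator on $G_0$ again $T_0$. The key algebraic fact I would then establish is that for $f \in C_c^\infty(\GG)$ one has $T_t f_t = (Tf)_t$ where $Tf \in C_c^\infty(\GG)$ is the result of applying the fiberwise differential operator to $f$; this is automatic for $t > 0$ from $T_t = t^k T$ together with the defining diffeomorphism in condition~(iii), and it persists at $t=0$ by continuity/smoothness of the family of operators. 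With this in hand, exactly as in the proof of Lemma~\ref{lem-continuity-of-inf-ch}, I choose $f \in C_c^\infty(\GG)$ with $\langle v, \pi(f_{t_0}) v\rangle \ne 0$ and write
\[
\InfCh_{T_{t_\alpha}}(\pi_\alpha)\,\langle v_\alpha, \pi_\alpha(f_{t_\alpha}) v_\alpha\rangle
= \langle v_\alpha, \pi_\alpha\bigl((Tf)_{t_\alpha}\bigr) v_\alpha\rangle
\longrightarrow \langle v, \pi\bigl((Tf)_{t_0}\bigr) v\rangle
= \InfCh_{T_{t_0}}(\pi)\,\langle v, \pi(f_{t_0}) v\rangle,
\]
using the matrix-coefficient convergence for $f$ and for $Tf$, and then divide by the nonzero limit of $\langle v_\alpha, \pi_\alpha(f_{t_\alpha}) v_\alpha\rangle$. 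This shows $\InfCh_{T_{t_\alpha}}(\pi_\alpha) \to \InfCh_{T_{t_0}}(\pi)$, i.e. the extended function is continuous on $\widehat{\GG}_{\mathrm{tempered}}$.

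The main obstacle is the bookkeeping at $t=0$: one must be sure that the \emph{value} $\InfCh_{T_0}(\pi_0)$ defined by the extended operator $T_0$ is the right thing — i.e. that $\pi_0(T_0)$ really is the operator obtained as the $t\to 0$ limit of $t^k\pi_t(T)$ on smooth vectors — and that $T_0$ lands in an algebra (a subalgebra of $\mathcal{U}(\mathfrak{g}_0)$) for which $\InfCh_{T_0}$ is scalar on irreducibles. The first point is exactly condition~(iii) and the scaling lemma applied to $T$ written as a polynomial in a basis adapted to the Cartan decomposition; the second is the observation already made in the excerpt that the top-degree part of $T$, rescaled, lands in $\Sym(\mathfrak{s})^K \hookrightarrow \mathcal{Z}(\mathfrak{g}_0)^K$, so that $\pi_0(T_0)$ is a scalar on each irreducible $\rho_{\tau,\nu}$, with value the corresponding $K$-invariant polynomial evaluated at $\nu$. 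Everything else is a verbatim repetition of the argument for Lemma~\ref{lem-continuity-of-inf-ch}, so no further calculation is needed.
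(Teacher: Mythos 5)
Your proposal is correct and is precisely the intended argument; the paper's proof consists only of the one-line remark that the argument of Lemma~\ref{lem-continuity-of-inf-ch} applies, and you have filled in exactly the bookkeeping that remark leaves implicit (the matrix-coefficient criterion for convergence in $\widehat{\GG}_{\mathrm{tempered}}$, the identity $T_t f_t = (Tf)_t$ via the smooth family of fiberwise right-invariant operators from the preceding lemma, and the identification of the limit operator $T_0$ with an element of $\operatorname{Sym}(\mathfrak{s})^K$ so that $\InfCh_{T_0}$ is scalar on irreducibles of $G_0$).
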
 

\begin{proof} 
The  argument that was used for the proof of Lemma~\ref{lem-continuity-of-inf-ch} may be applied here, too.
\end{proof}

\begin{theorem}
\label{thm-continuous-inf-ch-on-family-dual}
    Let $G$ be a real reductive group and let $\mathfrak{h}_{\C}$ be a Cartan subalgebra of $\mathfrak{g}_{\C}$.   The family of maps 
    \[
   t \cdot \InfCh\colon  \widehat G _{t,\mathrm{tempered}} \longrightarrow \mathfrak{h}_{\C}^* / W(\mathfrak{g}_{\C},\mathfrak{h}_{\C})\qquad (t> 0)
    \]
    extends to a continuous map
    \[
    \widehat{\GG}_{\mathrm{tempered}} \longrightarrow \mathfrak{h}_{\C}^* / W(\mathfrak{g}_{\C},\mathfrak{h}_{\C}).
    \]
    Specifically, taking $\lie{h}=\lie{h}_{\mathrm{min}}$, the continuous extension to $t=0$ is given by the infinitesimal character of $G_0$,  from \eqref{eq-def-im-inf-ch-on-unitary-dual-for-motion-group},
    \[
        \InfCh:\widehat{G}_0 \to i\mathfrak{a}^*_{\mathrm{min}}/W(\lie{g},\lie{a}_{\mathrm{min}}) \subseteq \mathfrak{h}_{\min,\C}^* / W(\mathfrak{g}_{\C},\mathfrak{h}_{\C}).
    \]
\end{theorem}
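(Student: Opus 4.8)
The plan is to reduce the statement to the scalar-valued continuity already established in Lemma~\ref{lem-continuity-of-inf-ch-in-families}, exactly as Theorem~\ref{thm-continuity-of-inf-ch-for-g} was deduced from Lemma~\ref{lem-continuity-of-inf-ch}. First I would note that the target $\lie{h}_\C^*/W(\lie{g}_\C,\lie{h}_\C)$ is, via the Harish--Chandra isomorphism \eqref{eq:Harish-Chandra}, the maximal ideal space of the finitely generated algebra $\Sym(\lie{h}_\C)^{W(\lie{g}_\C,\lie{h}_\C)}$; its topology is the initial topology with respect to the functions $f\in\Sym(\lie{h}_\C)^{W(\lie{g}_\C,\lie{h}_\C)}$. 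Hence it suffices to show that for each such invariant polynomial $f$, the composite $\widehat{\GG}_{\mathrm{tempered}}\to\lie{h}_\C^*/W\to\C$ is continuous. Pulling $f$ back through $\xi_{\lie h}$ gives an element $T_f\in\mathcal Z(\lie g_\C)$, and the key point is that the candidate extended map is, by construction, $\pi_t\mapsto \InfCh_{(T_f)_t}(\pi_t)$ with $(T_f)_t = t^{\,k}T_f$ for $k=\order(T_f)$ and $t>0$: the factor $t^k$ is precisely the scaling that makes $t\cdot\InfCh$ the ``infinitesimal character in the rescaled enveloping algebra,'' matching the normalization $T_t=t^kT$ used just before Lemma~\ref{lem-continuity-of-inf-ch-in-families}. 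With that identification, continuity of the composite on the dense part $t>0$ together with its continuous extension to $t=0$ is exactly the content of Lemma~\ref{lem-continuity-of-inf-ch-in-families}.

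Next I would identify the value of the extended map at $t=0$. At $t=0$ the rescaled operator $(T_f)_0$ lives in $\mathcal U(\lie g_0)$ and, by the Lemma on rescaled right-invariant vector fields preceding Lemma~\ref{lem-continuity-of-inf-ch-in-families}, it is the image of the top-degree (symbol) part of $T_f$ under $\mathcal U(\lie g_\C)\to\mathcal U(\lie g_{0,\C})$, i.e.\ it depends only on the principal symbol of $T_f$ in $\Sym(\lie g_\C)$, and it factors through $\Sym(\lie s)^K\hookrightarrow\mathcal Z(\lie g_0)^K$. On the representation $\rho_{\tau,\nu}$ of $G_0=K\ltimes\lie s$ this acts by the scalar obtained by evaluating the corresponding $K$-invariant polynomial on $\nu\in\lie a_{\min}^*\subseteq\lie s^*$, by the explicit computation $\InfCh_T(\rho_{\tau,\nu})=T(\nu)$ recorded before Lemma~\ref{lem-continuity-inf-ch-for-g-0}. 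Tracking this through the Harish--Chandra isomorphism for $\lie h_{\min}$: the principal symbol of $\xi_{\lie h_{\min}}^{-1}(f)$, restricted to $i\lie a_{\min}^*\subseteq\Re(\lie h_{\min,\C})$ inside $\lie h_{\min,\C}^*$, is just $f$ itself evaluated on the imaginary subspace, because the $\rho$-shift in Harish--Chandra's map is a lower-order correction that dies in the symbol. This is what identifies the $t=0$ value with $\InfCh\colon\widehat{G}_0\to i\lie a_{\min}^*/W(\lie g,\lie a_{\min})\subseteq\lie h_{\min,\C}^*/W(\lie g_\C,\lie h_{\min,\C})$ from \eqref{eq-def-im-inf-ch-on-unitary-dual-for-motion-group}, as asserted.

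Finally, to pass from ``each coordinate function is continuous and extends'' to ``the map to $\lie h_\C^*/W$ is continuous and extends,'' I would choose generators $f_1,\dots,f_r$ of $\Sym(\lie h_\C)^{W(\lie g_\C,\lie h_\C)}$, so that $(f_1,\dots,f_r)$ gives a closed embedding $\lie h_\C^*/W\hookrightarrow\C^r$; continuity of the $\C^r$-valued map together with the fact that its image lands in the closed subset $\lie h_\C^*/W$ (true on $t>0$, hence on the closure since that subset is closed) yields the desired continuous map $\widehat{\GG}_{\mathrm{tempered}}\to\lie h_\C^*/W$. The main obstacle I anticipate is not the topology bookkeeping but the symbol computation in the middle paragraph: one must check carefully that, under $t\mapsto t^k T$ and the deformation-to-the-normal-cone degeneration, the limiting operator really is the principal-symbol part landing in $\Sym(\lie s)^K$, and that the Harish--Chandra $\rho$-shift contributes nothing to the leading term — i.e.\ that the ``$t\cdot\InfCh$'' normalization is compatible with Vogan's identification \eqref{eq-def-im-inf-ch-on-unitary-dual-for-motion-group} of the $G_0$ infinitesimal character with $i\nu$. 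Everything else is a routine application of Lemma~\ref{lem-continuity-of-inf-ch-in-families} and the matrix-coefficient description \eqref{eq-topology-on-the-dual-from-matrix-coefficients} of convergence in the dual, extended fiberwise over $\GG$.
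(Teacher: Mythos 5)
Your reduction of the continuity claim to the scalar-valued Lemma~\ref{lem-continuity-of-inf-ch-in-families} via Chevalley generators $p_1,\dots,p_r$ and the corresponding elements $T_j \in \mathcal Z(\lie g_\C)$ rescaled as $t^{k_j}T_j$ is precisely the paper's argument for the first half of the theorem, so that part matches. Where you diverge — and genuinely so — is in identifying the value of the extension at $t=0$. You propose to compute the limiting operator $(T_f)_0$ directly: observe that it is the principal symbol of $T_f$ pushed into $\Sym(\lie s)^K \subseteq \mathcal Z(\lie g_{0,\C})$, use the formula $\InfCh_T(\rho_{\tau,\nu}) = T(\nu)$, and then relate that symbol back to $f$ via the Chevalley restriction maps together with the fact that the Harish--Chandra $\rho$-shift is a lower-order correction. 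This is a correct and purely algebraic route, but as you honestly flag, it requires careful symbol bookkeeping (the leading term of $\xi_{\lie h_{\min}}^{-1}(f)$ under both the degeneration to the normal cone and the restriction $\lie h_{\min,\C}^* \supseteq \lie a_{\min}^* \subseteq \lie s^*$). The paper sidesteps all of this with a representation-theoretic shortcut: it applies Proposition~\ref{prop:continuous_induction} to the minimal principal series family $\pi_{\sigma,t^{-1}\nu}$, whose infinitesimal character is known explicitly from \eqref{eq-inf-ch-of-principal-series}, computes $t \cdot \InfCh(\pi_{\sigma, t^{-1}\nu}) = t \cdot \InfCh(\sigma) \oplus i\nu \to 0 \oplus i\nu$, and then notes that since every $\rho_{\tau,\nu}$ occurs as a subrepresentation of some $\rho_{\sigma,\nu}$, the limiting values are pinned down on all of $\widehat G_0$ by continuity alone. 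Both arguments are valid; your symbol calculation is more algebraically illuminating and self-contained, while the paper's approach is shorter and avoids any discussion of $\rho$-shifts or associated graded algebras by leaning on the convergence of families of representations already established in Proposition~\ref{prop:continuous_induction}.
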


\begin{proof} 
According to Chevalley's theorem \cite{Chevalley55}, there are  homogeneous, Weyl group-invariant  polynomial  functions  $p_1,\ldots, p_r $  on $\mathfrak{h}^*_{\C}$, say of degrees $k_1,\dots, k_r$, that freely generate the algebra of all  Weyl group-invariant  polynomial  functions on $\mathfrak{h}^*_{\C}$.  The function 
\[
\mathfrak{h}^*_{\C}/W(\mathfrak{g}_{\C},\mathfrak{h}_{\C})  \longrightarrow \C^r, \qquad \lambda\longmapsto (p_1(\lambda), \dots, p_r (\lambda))
\]
is then a homeomorphism (indeed an isomorphism of affine varieties).  The compositions of this homeomorphism with the maps in the statement of the theorem are the maps 
\[
 \widehat G_t \ni \pi_t \longmapsto \bigl ( \InfCh_{T_{1,t}}(\pi_t), \dots,  \InfCh_{T_{r,t}}(\pi_t)\bigr ),
\]
where where $T_j \in \mathcal{Z}(\mathfrak{g}_{\C})$ corresponds to $p_j\in \C[\mathfrak{h}_{\C}^*]$ under the Harish-Chandra isomorphism, and where $T_{j,t}= t^{k_j} T_j$. Since $T_j$ has order $k_j$, it follows from Lemma~\ref{lem-continuity-of-inf-ch-in-families} that the composition gives a continuous map from $\widehat{\GG}_{\mathrm{tempered}}$ to $\C^r$.  This proves the existence of the continuous extension. 

In order to determine the value of this extension at $t{=}0$, we make an explicit computation using the minimal principal series of $G$.  
We have seen in \eqref{eq-inf-ch-of-principal-series} that for the minimal parabolic subgroup $P= P_{\min}$ of $G$, 
\[
\InfCh (\pi_{\sigma, t^{-1}\nu}) = \InfCh(\sigma){\oplus} i t^{-1}\nu\in i \mathfrak{t}^*_{\min} {\oplus} i \mathfrak{a}^*_{\min} ,
\]
so that
\begin{equation} 
\label{eq-rescaled-in-ch} 
t \cdot \InfCh (\pi_{\sigma, t^{-1}\nu}) = t \cdot \InfCh(\sigma){\oplus} i  \nu\in i\mathfrak{t}^*_{\min} {\oplus} i \mathfrak{a}^*_{\min} .
\end{equation}
The representation $\pi_{\sigma,\nu}$ will  be irreducible if $\nu$ is  fixed by no nontrivial element of $W(\mathfrak{g},\mathfrak{a}_{\min})$, 
but in any case it always has an infinitesimal character. According to  Proposition~\ref{prop:continuous_induction}, the representations $\pi_{\sigma, t^{-1}\nu}$ of $G_t$ extend to a continuous family of representations with limit $\rho_{\sigma,\nu}$ on $G_0$. The continuous extension is therefore given on $\widehat G_0$ by
\begin{equation*}
 \rho_{\tau,\nu} \longmapsto 0{\oplus } i\nu \in i\mathfrak{t}^*_{\min} {\oplus} i \mathfrak{a}^*_{\min},
\end{equation*}
for every irreducible constituent $\rho_{\tau,\nu}$ of $\rho_{\sigma,\nu}$
%To be clear, this is the Weyl group orbit of the linear functional on $\mathfrak{h}_{\C}$ that is zero $\mathfrak{t}_{\min}$ and equal to $i\nu$ on $\mathfrak{a}_{\min}$.  
Since every $\rho_{\tau,\nu}$ occurs in some $\rho_{\sigma, \nu}$ this completely determines the continuous extension to $t{=}0$ as claimed. 
\end{proof}

The imaginary part of the infinitesimal character always belongs to the image of the inclusion of $i\lie{a}^*_{\dom}$ into $\lie{h}^*_{\CC}/W(\lie{g}_{\C},\lie{h}_{\C})$; see Theorem \ref{thm-im-inf-ch-for-the-reduced-c-star-algebra}.  Therefore, composing the map of Theorem \ref{thm-continuous-inf-ch-on-family-dual} with the projection
\[
 \mathfrak{h}^*_{\C}/W(\mathfrak{g}_{\C},\mathfrak{h}_{\C})
 \longrightarrow 
 \Im (\mathfrak{h}^*_{\C})/W(\mathfrak{g}_{\C},\mathfrak{h}_{\C}),
\]
we obtain the the following.

\begin{theorem}
\label{thm-continuity-of-rescaled-im-inf-ch}
Let $G$ be a real reductive group.  There is a continuous map 
\[
\widehat{\GG}_{\mathrm{tempered}} \longrightarrow i   \mathfrak{a}_{\dom}^*
\]
such that 
\[
\widehat G_{t,\mathrm{tempered}} \ni \pi_t \longmapsto
t\cdot \ImInfCh({\pi}) \qquad (t{>}0) 
\]
and 
\[
\widehat G_{0,\mathrm{tempered}} \ni \pi_0 \longmapsto
 \ImInfCh({\pi}) \qquad (t{=}0) .
\]
\qed
\end{theorem}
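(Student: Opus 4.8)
The plan is to deduce this theorem as an immediate corollary of Theorem~\ref{thm-continuous-inf-ch-on-family-dual}, by exactly the device used to pass from Theorem~\ref{thm-continuity-of-inf-ch-for-g} to Theorem~\ref{thm-im-inf-ch-for-the-reduced-c-star-algebra}. Taking $\lie{h}=\lie{h}_{\min}$, Theorem~\ref{thm-continuous-inf-ch-on-family-dual} supplies a continuous map
\[
\Phi\colon \widehat{\GG}_{\mathrm{tempered}} \longrightarrow \lie{h}_{\min,\C}^*/W(\lie{g}_\C,\lie{h}_{\min,\C})
\]
which restricts on $\widehat G_{t,\mathrm{tempered}}$ to $t\cdot\InfCh$ for $t>0$ and to the $G_0$-infinitesimal character $\rho_{\tau,\nu}\mapsto i\nu$ at $t=0$. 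First I would compose $\Phi$ with the projection $\lie{h}_{\min,\C}^*/W \to \Im(\lie{h}_{\min,\C}^*)/W$. This projection is well defined and continuous because, by Lemma~\ref{lem-canonical-real-part-of-h-mod-w}, the Weyl group $W(\lie{g}_\C,\lie{h}_{\min,\C})$ preserves the real and imaginary subspaces of $\lie{h}_{\min,\C}$. Since $t$ is a \emph{real} scalar, taking imaginary parts commutes with multiplication by $t$, so for $t>0$ the composite sends $\pi_t$ to $t\cdot\ImInfCh(\pi_t)$; and at $t=0$ the value $i\nu$ is already purely imaginary and equals $\ImInfCh(\rho_{\tau,\nu})$ by the defining formula~\eqref{eq-def-im-inf-ch-on-unitary-dual-for-motion-group}. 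Thus the composite continuous map has precisely the two fiberwise descriptions asserted in the statement, except that a priori it takes values in $\Im(\lie{h}_{\min,\C}^*)/W$ rather than in $i\lie{a}_{\dom}^*$.

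It then remains to corestrict to $i\lie{a}_{\dom}^*$. Recall from the discussion around~\eqref{eq-im-inf-ch-map} and from the proof of Theorem~\ref{thm-im-inf-ch-for-the-reduced-c-star-algebra} that the composite
\[
i\lie{a}_{\dom}^* \xrightarrow{\cong} i\lie{a}_{\min}^*/W(\lie{g},\lie{a}_{\min}) \longrightarrow \Im(\lie{h}_{\min,\C}^*)/W(\lie{g}_\C,\lie{h}_{\min,\C})
\]
is a homeomorphism onto its image. I would then verify that the image of the composite map built above lies inside this image: for $t>0$ this is exactly the content of Theorem~\ref{thm-im-inf-ch-for-the-reduced-c-star-algebra} (the imaginary part of the infinitesimal character of a tempered representation of $G$ lies in $i\lie{a}_{\dom}^*$, and rescaling by the positive real $t$ keeps it in the cone), and for $t=0$ it holds because $i\nu\in i\lie{a}_{\min}^*$ already and $i\lie{a}_{\min}^*/W(\lie{g},\lie{a}_{\min})\cong i\lie{a}_{\dom}^*$. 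Pulling back along the homeomorphism onto its image then produces the required continuous map $\widehat{\GG}_{\mathrm{tempered}}\to i\lie{a}_{\dom}^*$.

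The only point requiring genuine care — and hence the ``main obstacle'', though a mild one — is the verification that the values stay in the embedded copy of $i\lie{a}_{\dom}^*$ \emph{uniformly in $t$}, which is what makes the corestriction legitimate; this is where Theorem~\ref{thm-im-inf-ch-for-the-reduced-c-star-algebra} handles the fibers $t>0$ and the explicit Mackey parametrization of $\widehat G_0$ handles the fiber $t=0$. Everything else is formal manipulation of continuous maps between quotient spaces, so the proof should be short.
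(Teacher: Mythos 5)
Your proposal is correct and takes essentially the same route as the paper: compose the scaled infinitesimal character map of Theorem~\ref{thm-continuous-inf-ch-on-family-dual} with the projection to imaginary parts, then corestrict to $i\mathfrak{a}_{\dom}^*$ using Theorem~\ref{thm-im-inf-ch-for-the-reduced-c-star-algebra}. The paper does this in a single sentence before stating the theorem; your write-up is more explicit (in particular, you spell out that $t$ being real makes $\Im$ commute with scaling by $t$, and why the projection is well-defined), but the argument is the same.
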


\begin{definition}
    Given an irreducible representation $\pi$ of $C^*_r (\GG )$, we shall call the element of $i\lie{a}^*_{\dom}$ obtained by applying the map in Theorem \ref{thm-continuity-of-rescaled-im-inf-ch} to $\pi$ the \emph{scaled imaginary part of the infinitesimal character} of $\pi$.
\end{definition}
Thus for $t{>}0$ the scaled imaginary part of the infinitesimal character of the irreducible representation 
$\pi_t {=} \pi_{\sigma,t^{-1}\nu}$ of $G_t {=} G$ is $i \nu$,  while  the  scaled imaginary part of the infinitesimal character of the irreducible representation $\rho_{\tau,\nu}$ of $G_0$ is $i \nu$. 

\begin{remark}
    Similarly to $G$ and $G_0$, by the Dauns-Hofmann theorem there is a unique inclusion of $C_0(i \lie{a}^*_{\dom})$ into the center of the multiplier algebra of $C_r^*(\GG)$ such that 
    \[
    \pi_t \bigl ( (f\cdot a)(t)\bigr ) = f\bigl ( t \cdot \ImInfCh (\pi_t )\bigr ) \cdot \pi_t (a)
    \]
    for all $t{>}0$, and
     \[
    \pi_0 \bigl ( (f\cdot a)(0)\bigr ) = f( \ImInfCh (\pi_0 )) \cdot \pi_0 (a)
    \]
     for all irreducible  tempered unitary representations $\pi_t$ of $G_t$ and $\pi_0$ of $G_0$, all $f\in C_0(i \mathfrak{a}_{\dom}^*)$ and all $a\in C^* _r(\GG)$.
\end{remark}

\subsection{Subquotients of the C*-algebra of continuous sections}

\begin{definition}
\label{def-ideals-and-quotients-of-the-c-star-algebra-of-cts-sections}
We shall denote by 
    \[
    C^*_r(\GG ;U_I)\triangleleft C^*_r (\GG )
    \]
the $C^*$-algebra ideal   associated to the open subset of the tempered dual consisting of representations whose scaled infinitesimal character belongs to the open subset $U_I\subseteq \mathfrak{a}_{\dom}^*$.  We shall denote by 
    \[
    C^*_r(\GG ;U_I)\to C^*_r (\GG ; I)
    \]
the quotient of this ideal associated to the closed subset $\mathfrak{a}^*_{I,+}\subseteq U_I$.
\end{definition}

\begin{remark}
If $\{\, A_t \,\} _{t\in T}$ is any continuous field of $C^*$-algebras, and if, for every $t\in T$, $J_t$ is a $C^*$-algebra ideal in $A_t$, then $\{\, J_t\,\}_{t\in T}$ becomes a continuous field of $C^*$-algebras whose continuous sections are those of the original field that take values in the ideals $J_t$, provided that for every $t_0$ and for every $a\in J_{t_0}$ there is a continuous section of the original continuous field that takes values in the ideals $J_t$ and takes the value $a$ at $t_0$. This latter condition is not automatic, but it is  satisfied for the fields of ideals $C^*_r (G_t;U_I)$.
\end{remark}

\begin{remark}
We may also ask whether or not the field of quotients $\{ \,A_t/J_t\,\}_{t\in T}$ becomes a continuous field of $C^*$-algebras, if one deems the continuous sections to be those that lift to continuous sections of the original field.  This is not necessarily so, but the field is at least upper semicontinuous, in the sense that for every liftable section $s$, and every $t_0\in T$
\[
\limsup_{t\to t_0} \|s(t)\|_{A_t/J_t} \le \| s(t_0)\|_{A_{t_0}/J_{t_0}}.
\]
In fact the fields of quotients introduced in this section \emph{are} continuous fields, and not just upper semicontinuous fields.  We shall not need this fact, but it follows from a minor supplementary argument; see Remark~\ref{rem-injectivity-of-alpha-0} below.
\end{remark}

\subsection{Structure of the subquotients}

We shall now determine the structure of the subquotients $C^*_r (\GG, I)$. The formulation of the result and the method of proof are borrowed from \cite{HigsonRoman20}.

\begin{lemma}
\label{lem-rescaling-automorpshisms}
Let $G$ be a real reductive group and let $I$ be a subset of the set $S$ of simple roots in $\Delta^+(\mathfrak{g}, \mathfrak{a}_{\min})$. 
    There is a unique continuous action  of the multiplicative group $\R^{\times}_{+}$ on $C^*_r (G;I)$ by $C^*$-algebra automorphisms.
    \[
    \alpha_t\colon C_r^* (G,I) \longrightarrow C^*_r (G, I)\quad (t> 0)
    \]
   such that 
   \[
   \pi_{\sigma, \nu}(\alpha_t (f)) = \pi_{\sigma, t^{-1}\nu} (f)
   \]
   for all $t>0$, all $f\in C^*_r (G_t;I)$, all $\sigma\in \widehat M_{I,\mathrm{tempiric}}$, and all $\nu \in \mathfrak{a}^*_{I,+}$.
\end{lemma}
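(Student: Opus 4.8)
The plan is to use the explicit description of $C^*_r(G;I)$ provided by Theorem~\ref{thm-i-subquotient-of-g-isomorphism}, under which
\[
C^*_r (G;I)\cong \bigoplus _{\sigma\in \widehat M_{I,\mathrm{tempiric}}} C_0\bigl (\mathfrak{a}^*_{I,+}, \mathfrak{K} (\Ind_{P_I}^G H_\sigma)\bigr ),
\]
and simply transport the obvious scaling action across this isomorphism. On the right-hand side, for $t>0$, define $\alpha_t$ summand-by-summand: on the $\sigma$-summand let $\alpha_t$ act on a function $h\in C_0(\mathfrak{a}^*_{I,+},\mathfrak{K}(\Ind_{P_I}^G H_\sigma))$ by $(\alpha_t h)(\nu) = h(t\nu)$. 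This is well-defined because $\mathfrak{a}^*_{I,+}$ is a cone (it is $W$-chamber-like: scaling by a positive real preserves the conditions $\langle\nu,\alpha\rangle=0$ for $\alpha\in I$ and $\langle\nu,\beta\rangle\neq 0$ for $\beta\notin I$), so $\nu\mapsto t\nu$ is a homeomorphism of $\mathfrak{a}^*_{I,+}$; it is an isometric $*$-isomorphism since it is just precomposition with a homeomorphism of the base space, and it preserves the $C_0$ condition. It clearly satisfies $\alpha_s\alpha_t=\alpha_{st}$ and $\alpha_1=\mathrm{id}$, and $t\mapsto\alpha_t(f)$ is norm-continuous for each fixed $f$ because dilation acts continuously on $C_0$ of a cone.

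Next I would check that this action has the asserted effect on the evaluation homomorphisms $\pi_{\sigma,\nu}$. By Theorem~\ref{thm-surjective-morphism-from-the-i-subquotient-of-g}, under the isomorphism above the homomorphism $\pi_{\sigma,\nu}$ is (up to the identification of $\mathfrak{K}(\Ind_{P_I}^G H_\sigma)$ with itself) simply evaluation at $\nu$ on the $\sigma$-summand followed by projection. Hence for $f\in C^*_r(G;I)$ with $\sigma$-component $h$,
\[
\pi_{\sigma,\nu}(\alpha_t(f)) = (\alpha_t h)(\nu) = h(t\nu) = \pi_{\sigma,t\nu}(f).
\]
This gives the displayed identity with $t^{-1}\nu$ in place of $t\nu$ after replacing $t$ by $t^{-1}$ (equivalently, one may define $(\alpha_t h)(\nu)=h(t^{-1}\nu)$ to match the paper's sign convention precisely; I would simply adopt whichever convention makes $\pi_{\sigma,\nu}\circ\alpha_t = \pi_{\sigma,t^{-1}\nu}$ hold on the nose). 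Uniqueness is then immediate: the family of homomorphisms $\{\pi_{\sigma,\nu}\}_{\sigma,\nu}$ is jointly faithful on $C^*_r(G;I)$ (this is exactly the content of Theorem~\ref{thm-i-subquotient-of-g-isomorphism}), so any two automorphisms agreeing with all $\pi_{\sigma,t^{-1}\nu}$ after composing with all $\pi_{\sigma,\nu}$ must be equal.

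The only genuinely substantive point—and the one I would be most careful about—is the matching of Hilbert spaces: the formula $\pi_{\sigma,\nu}(\alpha_t(f)) = \pi_{\sigma,t^{-1}\nu}(f)$ asks for an equality of operators, and $\pi_{\sigma,\nu}$ and $\pi_{\sigma,t^{-1}\nu}$ a priori act on the nominally different spaces $\Ind_{P_I}^G H_\sigma\otimes\exp(i\nu)$ and $\Ind_{P_I}^G H_\sigma\otimes\exp(it^{-1}\nu)$. But in the compact-picture realization of induced representations (as in Definition~\ref{def-rho-of-sigma-nu} and its analogue for $G$), all of these representations act on the single fixed Hilbert space $\Ind_{P_I\cap K}^K H_\sigma = L^2(K,H_\sigma)^{K_I}$, with only the action formula depending on $\nu$; this is precisely why the isomorphism of Theorem~\ref{thm-i-subquotient-of-g-isomorphism} has $\mathfrak{K}(\Ind_{P_I}^G H_\sigma)$—a $\nu$-independent algebra—as its target. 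So the identity is literally an equality of operators on $\mathfrak{K}(L^2(K,H_\sigma)^{K_I})$, and no further identification is needed. With that observation in place the remaining verifications (well-definedness on the cone, the cocycle and continuity properties, $C_0$-preservation) are routine, and uniqueness follows from faithfulness as above.
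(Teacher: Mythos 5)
Your proposal is correct and takes essentially the same approach as the paper: transport the dilation action $h(\nu)\mapsto h(t^{-1}\nu)$ across the direct-sum isomorphism of Theorem~\ref{thm-i-subquotient-of-g-isomorphism}, and obtain uniqueness from the joint faithfulness of the family $\{\pi_{\sigma,\nu}\}$. Your additional remarks (that $\mathfrak{a}^*_{I,+}$ is stable under positive scaling, and that the compact picture puts every $\pi_{\sigma,\nu}$ on the same Hilbert space so the claimed operator identity makes sense) are correct verifications that the paper leaves implicit.
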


\begin{proof} 
The uniqueness assertion is clear, since the representations $\pi_{\sigma,\nu}$ separate the points of $C^*_r (G; I)$.
Existence follows from the isomorphism
\begin{equation}
    \label{eq-structure-of-face-algebra-recap}
    C^*_r (G;I)\stackrel \cong\longrightarrow \bigoplus _{\sigma\in \widehat M_{I,\mathrm{tempiric}}} C_0\bigl (\mathfrak{a}^*_{I}, \mathfrak{K} (\Ind_{P_I}^G H_\sigma)\bigr )
\end{equation}
in Theorem~\ref{thm-i-subquotient-of-g-isomorphism}, using which we may define $\alpha_t$ by requiring it to correspond with  the automorphisms
\[
\alpha_{\sigma,t} \colon C_0\bigl (\mathfrak{a}^*_{I}, \mathfrak{K} (\Ind_{P_I}^G H_\sigma)\bigr )
\longrightarrow C_0\bigl (\mathfrak{a}^*_{I}, \mathfrak{K} (\Ind_{P_I}^G H_\sigma)\bigr )
\]
of the summands on the right-hand side above defined by 
\[
\alpha_{\sigma,t} (h)(\nu) = h(t^{-1}\nu).
\qedhere\popQED
\]
\end{proof}

\begin{lemma}
\label{lem-mackey-embedding}
Let $G$ be a real reductive group and let $I$ be a subset of the set $S$ of simple roots in $\Delta^+(\mathfrak{g}, \mathfrak{a}_{\min})$.  There is a unique $C^*$-algebra homomorphism 
    \[
    \alpha_0\colon C^*_r (G_0;I) \longrightarrow C^*_r (G; I)
    \]
    such that 
    \[
    \pi_{\sigma, \nu} (\alpha_0(f_0)) = \rho_{\sigma,\nu}(f_0)
    \]
    for all $f_0\in C^*(G_0;I)$, all $\sigma\in \widehat M_{I,\mathrm{tempiric}}$,  and all $\nu\in \mathfrak{a}^*_{I,+}$.
\end{lemma}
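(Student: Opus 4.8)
The plan is to define $\alpha_0$ by the prescribed formula and to check well-definedness from the explicit descriptions of both subquotients. First, an interpretation of the statement: in $\rho_{\sigma,\nu}$ the symbol $\sigma$ is a tempiric representation of $M_I$, whereas Definition~\ref{def-rho-of-sigma-nu} attaches $\rho_{\sigma,\nu}$ to a representation of $K_I=K\cap M_I$, so here $\rho_{\sigma,\nu}$ means $\rho_{\sigma|_{K_I},\,\nu}$. Since $M_I$ is reductive, $\sigma$ is admissible, so $\sigma|_{K_I}\cong\bigoplus_{\tau\in\widehat K_I}m_{\tau,\sigma}\,\tau$ with finite multiplicities, and accordingly $\rho_{\sigma,\nu}\cong\bigoplus_\tau m_{\tau,\sigma}\,\rho_{\tau,\nu}$. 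The point that makes the formula literally sensible is that the Hilbert space $\Ind_{K_I}^K H_\sigma=L^2(K,H_\sigma)^{K_I}$ of $\rho_{\sigma,\nu}$ is exactly the Hilbert space $\Ind_{P_I}^G H_\sigma$ of $\pi_{\sigma,\nu}$ in the compact picture of parabolic induction; so ``$\pi_{\sigma,\nu}(\alpha_0(f_0))=\rho_{\sigma,\nu}(f_0)$'' compares operators on one and the same space.

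Next, use Theorem~\ref{thm-i-subquotient-of-g-isomorphism} to identify $C^*_r(G;I)\cong\bigoplus_{\sigma\in\widehat M_{I,\mathrm{tempiric}}}C_0(\mathfrak{a}^*_{I,+},\mathfrak{K}(\Ind_{P_I}^G H_\sigma))$, under which $\pi_{\sigma,\nu}$ is evaluation of the $\sigma$-component at $\nu$, and Theorem~\ref{thm-structure-of-g-0-component-algebra} to identify $C^*_r(G_0;I)\cong\bigoplus_{\tau\in\widehat K_I}C_0(\mathfrak{a}^*_{I,+},\mathfrak{K}(\Ind_{K_I}^K H_\tau))$. Writing $f_0=((f_0)_\tau)_\tau$ and using $\rho_{\sigma,\nu}=\bigoplus_\tau m_{\tau,\sigma}\rho_{\tau,\nu}$, the candidate map is, in coordinates,
\[
\alpha_0\bigl(((f_0)_\tau)_\tau\bigr)_\sigma(\nu)\;=\;\rho_{\sigma,\nu}(f_0)\;=\;\bigoplus_{\tau\in\widehat K_I}\mathrm{id}_{\CC^{m_{\tau,\sigma}}}\otimes(f_0)_\tau(\nu),
\]
via the identification $\Ind_{K_I}^K H_\sigma\cong\bigoplus_\tau\CC^{m_{\tau,\sigma}}\otimes\Ind_{K_I}^K H_\tau$ coming from $\sigma|_{K_I}\cong\bigoplus_\tau m_{\tau,\sigma}\tau$. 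On each $\sigma$-component this is visibly a $*$-homomorphism, and it takes values in $C_0(\mathfrak{a}^*_{I,+},\mathfrak{K})$ because each $(f_0)_\tau$ does, $m_{\tau,\sigma}<\infty$, and the block sum converges uniformly in $\nu$ since $\|(f_0)_\tau\|_\infty\to0$. (If one prefers to argue with $f_0\in C^\infty_c(G_0)$ directly rather than through the model of Theorem~\ref{thm-structure-of-g-0-component-algebra}, the continuity and vanishing at infinity of $\nu\mapsto\rho_{\sigma,\nu}(f_0)$ is a Riemann--Lebesgue argument of exactly the kind behind Theorem~\ref{thm-surjective-morphism-from-the-i-subquotient-of-g}; compare \cite[Lemma~4.10]{CCH16}.)

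The one substantial point---and the main obstacle---is that $\alpha_0(f_0)$ lands in the $c_0$-direct sum, i.e.\ for each $\varepsilon>0$ only finitely many $\sigma\in\widehat M_{I,\mathrm{tempiric}}$ satisfy $\sup_\nu\|\rho_{\sigma,\nu}(f_0)\|>\varepsilon$. From the block form, $\sup_\nu\|\rho_{\sigma,\nu}(f_0)\|=\max\{\|(f_0)_\tau\|_\infty:\tau\in\widehat K_I,\ m_{\tau,\sigma}\neq0\}$. Since $f_0$ lies in the $c_0$-sum over $\tau$, only finitely many $\tau$, say $\tau_1,\dots,\tau_n$, have $\|(f_0)_\tau\|_\infty>\varepsilon$, so $\sup_\nu\|\rho_{\sigma,\nu}(f_0)\|>\varepsilon$ forces $\sigma|_{K_I}$ to contain one of $\tau_1,\dots,\tau_n$ as a $K_I$-type. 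This is where uniform admissibility is used: applying Theorem~\ref{thm-full-uniform-admissibility} to the reductive group $M_I$, each $\tau_j$ is a $K_I$-type of only finitely many tempiric representations of $M_I$, so only finitely many $\sigma$ survive. This gives the well-defined $*$-homomorphism $\alpha_0\colon C^*_r(G_0;I)\to C^*_r(G;I)$ with the required identity, and uniqueness is immediate since the $\pi_{\sigma,\nu}$ jointly separate the points of $C^*_r(G;I)$ by Theorem~\ref{thm-i-subquotient-of-g-isomorphism}. (One can also bypass the continuity estimates entirely: by Proposition~\ref{prop:continuous_induction} the $\pi_{\sigma,t^{-1}\nu}$ for $t>0$ together with $\rho_{\sigma,\nu}$ at $t=0$ form a continuous family with common scaled imaginary infinitesimal character $i\nu\in\mathfrak{a}^*_{I,+}$, hence assemble into representations $\Pi_{\sigma,\nu}$ of $C^*_r(\GG;I)$; picking a lift $f\in C^*_r(\GG;I)$ of $f_0$ along the surjective evaluation $C^*_r(\GG;I)\to C^*_r(G_0;I)$ and setting $\alpha_0(f_0)=\mathrm{ev}_1(f)$ works, since $\pi_{\sigma,\nu}(\mathrm{ev}_1(f))=\Pi_{\sigma,\nu}(f)=\rho_{\sigma,\nu}(f_0)$ depends only on $f_0$.)
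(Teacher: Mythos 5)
Your main argument is correct and amounts to essentially the same approach as the paper's, with one difference in organization. The paper argues directly from smooth, compactly supported, $K$-bi-finite functions on $G_0$, redoing a Riemann--Lebesgue estimate to show that $\nu\mapsto\rho_{\sigma,\nu}(f_0)$ is a compact-operator-valued $C_0$-function and then extending by density; you instead take as input the explicit $c_0$-direct-sum model for $C^*_r(G_0;I)$ from Theorem~\ref{thm-structure-of-g-0-component-algebra} (which already encodes that estimate) and diagonalize $\rho_{\sigma,\nu}\cong\bigoplus_\tau m_{\tau,\sigma}\,\rho_{\tau,\nu}$ via $\sigma|_{K_I}$. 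This is a mild gain in cleanliness, not a genuinely different route: both proofs hinge in the same way on Theorem~\ref{thm-full-uniform-admissibility} applied to $M_I$ to guarantee that, for any $\varepsilon>0$, only finitely many $\sigma$-summands exceed $\varepsilon$ in norm. (The paper invokes this finiteness tersely, simply asserting that $K$-bi-finiteness of $f_0$ forces only finitely many nonzero $\sigma$-summands; your version makes the underlying appeal to uniform admissibility explicit.)

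The parenthetical alternative at the end, however, does not work. You propose $\alpha_0(f_0)=\mathrm{ev}_1(f)$ for a lift $f\in C^*_r(\GG;I)$ of $f_0$ along $\mathrm{ev}_0$, on the ground that $\pi_{\sigma,\nu}(\mathrm{ev}_1(f))=\rho_{\sigma,\nu}(f_0)$. That identity is false: $\pi_{\sigma,\nu}(\mathrm{ev}_1(f))$ is $\pi_{\sigma,\nu}(f_1)$, and since $\ker(\mathrm{ev}_0)=C_0\bigl((0,\infty),C^*_r(G;I)\bigr)$ with $\mathrm{ev}_1$ surjective on that kernel, the fiber value $f_1$ is completely unconstrained by $f_0$, so the prescription is not even well-defined. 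What the continuous family of Proposition~\ref{prop:continuous_induction} actually gives is the weak-operator limit $\lim_{t\to0}\pi_{\sigma,t^{-1}\nu}(f_t)=\rho_{\sigma,\nu}(f_0)$, not an identity at $t=1$. Upgrading that pointwise limit to a norm limit uniform over $\sigma$ and $\nu$ is precisely the technical content of Theorem~\ref{thm-limit-formula}, and that argument presupposes $\alpha_0$ rather than producing it; likewise the mapping-cone isomorphism of Theorem~\ref{thm-cstar-iso-to-mapping-cone} is built from $\alpha_0$, so cannot serve as a shortcut to defining it.
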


\begin{proof} 
As with Lemma~\ref{lem-rescaling-automorpshisms}, the uniqueness assertion is clear, and to prove existence, we can again use the isomorphism \eqref{eq-structure-of-face-algebra-recap}, using which it suffices to construct a suitable $C^*$-homomorphism
\begin{equation}
    \label{eq-def-of-alpha-zero}
 \alpha_0\colon  C^*_r (G_0;I) \longrightarrow \bigoplus _{\sigma\in \widehat M_{I,\mathrm{tempiric}}} C_0\bigl (\mathfrak{a}^*_{I}, \mathfrak{K} (\Ind_{P_I}^G H_\sigma)\bigr ).
\end{equation}
We define 
\begin{equation}
    \label{eq-def-of-alpha-zero-fmla}
\alpha_0\colon 
f_0\longmapsto \oplus_\sigma \bigl [ \nu \mapsto \rho_{\sigma,\nu}(f_0)\bigr] 
\end{equation}
Considering an individual summand, if $f_0$ is a smooth and compactly supported function on $G_0$, and if $\varphi,\psi\in L^2(K, H_\sigma)^{K_I}$, then
\begin{equation}
\label{eq-matrix-coefficient-of-rep-of-motion-group}
\langle  \varphi , \rho_{\sigma,\nu} (f_0) \psi \rangle 
  = \int_K\int_K\int_{\lie{s}} 
 f_0\bigl (k'k^{-1}, \Ad_k X \bigr )  e^{-i \nu(X)}
  \bigl \langle \varphi(k'), \psi(k)\bigr \rangle \, dk'\, dk \, dX ,
\end{equation}
and the Riemann-Lebesgue lemma shows that the above is a $C_0$-function on $\nu$.
If $f_0$ is moreover $K$-bi-finite, then the operator $\rho_{\sigma,\nu} (f_0)$ is identically $0$ on a subspace of finite-codimension in  $L^2 (K,H_\sigma)^{K_I}$, and on the finite-dimensional orthogonal complement it is unitarily equivalent to a finite matrix with the above matrix coefficients \eqref{eq-matrix-coefficient-of-rep-of-motion-group} as entries. Therefore $\rho_{\sigma,\nu} (f_0)$ is a compact operator-valued $C_0$ function of $\nu$, for every $\sigma$. But in addition, $K$-bi-finiteness implies that only finitely many summands on the right of \eqref{eq-def-of-alpha-zero-fmla} are nonzero.   

It follows that the formula \eqref{eq-def-of-alpha-zero-fmla} does indeed define an element of the right-hand side in \eqref{eq-def-of-alpha-zero}, not just for a smooth, compactly supported and $K$-bi-finite function $f_0$  on $G_0$, but, by an approximation argument, for any $f_0\in C^*_r (G_0)$, since the functions we have considered up to now are dense in $C^*_r (G_0)$.
It is evident  the homomorphism $\alpha_0$ so-defined  has the required property.
\end{proof}

\begin{remark}
\label{rem-injectivity-of-alpha-0}
    The morphism $\alpha_0$ is injective.  This follows from the definition \eqref{eq-def-of-alpha-zero-fmla} above
    and the fact that the representations $\rho_{\sigma,\nu}$ include, as subrepresentations, all of the irreducible representations of $C^*(G_0;I)$ (in fact, by Frobenius reciprocity,  every irreducible representation of $K_I$ occurs as a $K_I$-type in some minimal principal series representation of $M_I$, and the base  principal series representations, with $\nu{=}0$, are tempiric).
\end{remark}

\begin{theorem}
\label{thm-limit-formula}
Let $G$ be a real reductive group and let $I$ be a subset of the set $S$ of simple roots in $\Delta^+(\mathfrak{g}, \mathfrak{a}_{\min})$. If 
 $\{ f_t\}$ is a continuous section of the continuous field of $C^*$-algebras $\{ C^*_r (G_t; U_I) \}$, then
\[
\lim_{t\searrow 0}  \alpha_t\bigl ( \pi_I(f_t) \bigr ) = \alpha_0 (\rho_I(f_0)) ,
\]
where the limit is taken  in  the norm topology of $C^*_r(G,I)$, and   where $\pi_I:C^*_r(G,U_I) \to C^*_r (G, I)$  and $\rho_I:C^*_r(G,U_I) \to C^*_r (G, I)$ are the quotient maps that were introduced in Definitions \textup{\ref{def-ideals-and-quotients-of-the-reduced-c-star-algebra}} and \textup{\ref{def-ideals-and-quotients-of-the-reduced-c-star-algebra-g-0}}.
\end{theorem}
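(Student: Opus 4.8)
The plan is to establish the limit formula first for a dense class of sections and then to extend it by continuity, using the concrete description of $C^*_r(G;I)$ from Theorem~\ref{thm-i-subquotient-of-g-isomorphism}. All of the $C^*$-morphisms appearing in the statement --- $\pi_I$, $\rho_I$, $\alpha_0$, and each $\alpha_t$ for $t>0$ --- are contractive, and for a continuous section $t\mapsto\|f_t\|$ is bounded near $0$; so a routine $3\varepsilon$-argument reduces the theorem to the case in which $\{f_t\}$ ranges over a set of sections that is dense in the ideal $C^*_r(\GG;U_I)$ (after multiplying by a cutoff in $t$ there is no loss in assuming $\{f_t\}$ is the restriction of an element of $C^*_r(\GG;U_I)$). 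As the dense class I would take the sections $f=h\cdot a$, where $a\in C_c^\infty(\GG)$ is $K$-bi-finite and $h\in C_c(i\mathfrak{a}^*_{\dom})$ has support inside the open set $U_I$, with $h$ acting through the Dauns--Hofmann embedding of $C_0(i\mathfrak{a}^*_{\dom})$ into the centre of the multiplier algebra of $C^*_r(\GG)$. Such an $h\cdot a$ lies in $C^*_r(\GG;U_I)$, and since $C_0(U_I)$ acts nondegenerately on that ideal while the $K$-bi-finite elements are dense in $C_c^\infty(\GG)$, these sections are dense.

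Next I would unwind the statement through the structure theorems. Under the isomorphism $C^*_r(G;I)\cong\bigoplus_{\sigma\in\widehat M_{I,\mathrm{tempiric}}}C_0\bigl(\mathfrak{a}^*_{I,+},\mathfrak{K}(\Ind_{P_I}^G H_\sigma)\bigr)$, an element is detected by the operators $\pi_{\sigma,\nu}(\cdot)$; combining the defining relation for $\alpha_t$ in Lemma~\ref{lem-rescaling-automorpshisms}, that for $\alpha_0$ in Lemma~\ref{lem-mackey-embedding}, and the Dauns--Hofmann identity for $\GG$, one gets
\[
\pi_{\sigma,\nu}\bigl(\alpha_t(\pi_I(f_t))\bigr)=h(i\nu)\,\pi_{\sigma,t^{-1}\nu}(a_t),
\qquad
\pi_{\sigma,\nu}\bigl(\alpha_0(\rho_I(f_0))\bigr)=h(i\nu)\,\rho_{\sigma,\nu}(a_0),
\]
where the two operators on the right both act on the single Hilbert space $L^2(K,H_\sigma)^{K_I}$. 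The theorem is therefore equivalent to
\[
\sup_{\sigma}\ \sup_{\nu\in\mathfrak{a}^*_{I,+}}\ |h(i\nu)|\cdot\bigl\|\pi_{\sigma,t^{-1}\nu}(a_t)-\rho_{\sigma,\nu}(a_0)\bigr\|\ \xrightarrow{\,t\searrow 0\,}\ 0 .
\]

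For the limit, two finiteness inputs reduce everything to a finite-dimensional computation. First, the support of $h$ meets $\mathfrak{a}^*_{I,+}$ in a compact set $Q$, so only $\nu\in Q$ matter. Second, $K$-bi-finiteness of $a$, with $K$-types in a finite set $F$, forces $\pi_{\sigma,t^{-1}\nu}(a_t)$ and $\rho_{\sigma,\nu}(a_0)$ to live in the $F$-isotypic corner of $L^2(K,H_\sigma)^{K_I}$; by Frobenius reciprocity that corner is nonzero only when $H_\sigma$ contains, on restriction to $K_I=M_I\cap K$, a $K_I$-type occurring in some $\delta\in F$, and by the uniform admissibility of the tempiric representations of the Levi $M_I$ (Theorem~\ref{thm-full-uniform-admissibility}, applied to $M_I$) this holds for only finitely many $\sigma$. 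For each of those finitely many $\sigma$ the corner is a fixed finite-dimensional space independent of $t$ and $\nu$, on which norm convergence is convergence of matrix coefficients. Finally, Proposition~\ref{prop:continuous_induction} provides that $\{\pi_{\sigma,t^{-1}\nu}\}_{t>0}$ together with $\rho_{\sigma,\nu}$ is a continuous family of representations of $\GG$; writing $\pi_{\sigma,t^{-1}\nu}(a_t)$ as an integral in the coordinates of Section~\ref{subsec-deformation-family}(iii) and letting $t\searrow 0$ by dominated convergence gives $\pi_{\sigma,t^{-1}\nu}(a_t)\to\rho_{\sigma,\nu}(a_0)$, and the convergence is uniform for $\nu\in Q$ because $\nu$ enters only through its values on quantities --- of the type appearing in Example~\ref{ex:dilation_family} --- that converge uniformly on the compact set $Q$. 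Assembling these points gives the displayed estimate.

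The step I expect to be the real obstacle is this last one: passing from the strong, pointwise-in-$\nu$ convergence that the continuous-family property gives to convergence in operator norm that is uniform in both parameters $\sigma$ and $\nu$. It is exactly here that the compact support of $h$ (to control $\nu$) and Harish-Chandra's uniform admissibility for $M_I$ (to bound the relevant $\sigma$ and to make the ambient corner finite-dimensional) are indispensable; with those two inputs in hand, the remaining coordinate estimate is routine.
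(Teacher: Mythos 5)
Your outline matches the paper's in its essentials: both reduce by density to smooth, compactly supported, $K$-bi-finite sections; both then observe that $K$-bi-finiteness (together with Frobenius reciprocity and uniform admissibility of tempiric representations for the Levi $M_I$) confines the action to a fixed finite-dimensional corner of $L^2(K,H_\sigma)^{K_I}$ for only finitely many $\sigma$, so that norm convergence is matrix-coefficient convergence uniform in $\nu$; and both then appeal to the scaled DNC coordinates of Section~\ref{subsec-deformation-family}(iii) for that convergence. Two remarks on the differences. First, your Dauns--Hofmann cutoff $h\in C_c(U_I)$, introduced to restrict $\nu$ to a compact set $Q$, is superfluous: in the rescaled integral the only $\nu$-dependence is through the unimodular phase $e^{-i\nu(X)}$, so uniform convergence of the remaining (uniformly compactly supported) integrand already gives convergence of matrix coefficients uniformly over \emph{all} of $\mathfrak{a}^*_{I,+}$, with no need to localize in $\nu$; the cutoff is harmless but slightly misidentifies where the uniformity comes from. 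Second, the step you characterize as ``routine'' is where nearly all the work in the paper's proof lives: one has to pass through the $KMAN$ decomposition of $G$ and an Iwasawa decomposition of $M$, rescale the four Lie-algebra coordinates by $t$ so that the $t^{-d}$ from the Haar-measure normalization \eqref{eq-rescaled-Haar-measure} cancels the Jacobian, and verify that the modular factors and the $\sigma$-action converge uniformly on a fixed compact spatial support. Your proposal correctly identifies this as the crux and correctly isolates the finiteness inputs needed to reduce to it, so it is a valid route; but the ``coordinate estimate'' is not deferred in the paper---it \emph{is} the proof.
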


\begin{remark}
If we apply the representation $\pi_{\sigma,\nu}$ to both sides of the limit formula in the theorem, and use the characteristic properties of $\alpha_t$ and $\alpha_0$, then we obtain the formula 
\[
\lim_{t\searrow 0}  \pi_{\sigma,t^{-1}\nu}(f_t)   = \rho_{\sigma,\nu}(f_0) .
\]
This is a consequence of Example~\ref{ex:dilation_family} and Proposition~\ref{prop:continuous_induction}.   In effect, Theorem~\ref{thm-limit-formula}   promotes this pointwise limit formula (valid for $\sigma$ and one value of $\nu$ at a time) to a  uniform limit formula  over all $\sigma$ and $\nu$.  This is a technical matter: the main conceptual content of the theorem is already present in Proposition~\ref{prop:continuous_induction}.
\end{remark}

\begin{proof}[Proof of Theorem~\ref{thm-limit-formula}]
Let $P_I = MAN$ be  the Langlands decomposition of   $P_I$.
Let $\{ f_t\}$ be a continuous section of the continuous field of $C^*$-algebras $\{ C^*_r (G_t; U_I) \}$. The same argument that we used in the proof of Lemma~\ref{lem-mackey-embedding} shows that it suffices to prove that for every tempiric representation $\sigma$, and for every pair $\varphi, \psi$ of smooth functions in  $ L^2 (K,H_\sigma)^{K\cap M}$, the limit formula 
\[
\lim_{t\searrow 0}  \langle \varphi , \pi_{\sigma,t^{-1}\nu}(f_t)\psi \rangle    = \langle \varphi , \rho_{\sigma,\nu}(f_0)\psi \rangle  
\]
holds uniformly in $\nu$.

Let $g\in G$.    Using a product decomposition 
\[
g = \kappa (g)\mu(g) \alpha(g) \upsilon (g) \in KMAN
\]
we may write the unitary operator 
\[
\pi_{\sigma,\nu}(g)\colon L^2 (K,H_\sigma)^{K\cap M} \to L^2 (K,H_\sigma)^{K\cap M} \]
in the form 
\begin{multline}
    \label{eq-compact-picture-of-parabolic-induction}
(\pi_{\sigma,\nu}(g) \psi)(\ell)
\\
= e^{-i \nu(\log(\alpha (g^{-1}\ell)))}e^{-\rho (\log (\alpha (g^{-1}\ell)))} \sigma(\mu(g^{-1}\ell ))^{-1}\psi(\kappa(g^{-1}\ell))
\end{multline}
for all $\ell \in K$. Compare \cite[Sec.~VII.1]{KnappRepTheorySemisimpleGroups}; note that the terms $\kappa(g^{-1}\ell)$ and $\mu(g^{-1}\ell)$ in the product decomposition of $g$ are not separately well-defined, but the   compound term $\sigma(\mu(g^{-1}\ell))^{-1}\psi(\kappa(g^{-1}\ell))$ is. It follows from \eqref{eq-compact-picture-of-parabolic-induction}, plus a change of variables, that if $t{>}0$, and if $f_t$ is a smooth and compactly supported function on $G_t{=}G$, then  
\begin{multline}
\label{eq-first-formula-for-pi-f-t}
(\pi_{\sigma,t^{-1}\nu}(f_t)\psi)(\ell) 
    \\
    = t^{-d} \int _G f_t(\ell g^{-1}) e^{-i t^{-1}\nu(\log(\alpha (g))}e^{-\rho (\log (\alpha (g))} 
    \\
   \times \sigma(\mu(g))^{-1}\psi(\kappa(g)) \, dg,
\end{multline}
where $d = \dim (G/K)$. Here we are using the smooth family of  Haar measures $t^{-d}dg$ on $G_t=G$ as in \eqref{eq-rescaled-Haar-measure}.  

Let us now fix an Iwasawa decomposition $M = K'A'N'$, with $K' = K\cap M$ and $A'\subseteq A_{\min}$.  The    integral in \eqref{eq-first-formula-for-pi-f-t} may be then written as a multiple integral
\begin{multline*}
 t^{-d}\int_K\int_{A'} \int_{N'}\int_A\int_N 
  f_t(\ell(ka'n'an)^{-1})  e^{-i t^{-1}\nu(\log(a))} e^{-\rho (\log (a))}
 \\
 \times \sigma(a'n')^{-1}\psi(k)  e^{+2(\rho' (\log (a'))+\rho(\log (a)))} \, dk \, da'\, dn'\, da\, dn ,
\end{multline*}
for suitable choices of Haar measure on the factor groups. See for instance  \cite[Sec.~V.6]{KnappRepTheorySemisimpleGroups} for this change of variables formula. It  follows that the inner product $ \langle  \varphi , \pi_{\sigma,t^{-1}\nu} (f_t) \psi \rangle $ is equal to 
\begin{multline}
    \label{eq-matrix-coefficient-for-g-t-0}
 t^{-d} \int _K \int_K\int_{A'}\int_{N'}\int_A\int_N 
  f_t (\ell(ka'n'an)^{-1})  e^{-it^{-1} \nu(\log(a))}
  \\
  \times \bigl \langle \varphi (\ell),\sigma(a'n')^{-1}\psi(k)\bigr \rangle  e^{2\rho' (\log (a'))+ \rho(\log (a))} \, d\ell \, dk \, da'\, dn'\, da\, dn .
\end{multline}

Let us make some  adjustments to \eqref{eq-matrix-coefficient-for-g-t-0}. The exponential maps for the groups $A'$, $N'$, $A$ and $N$ are measure-preserving diffeomorphisms (for suitable multiples of Lebesgue measure on the Lie algebras). So we may switch  to Lie algebra coordinates in the integrals over these groups.  Then, after scaling all four Lie algebra coordinates by a factor of $t$, we obtain the reformulation of the integral in \eqref{eq-matrix-coefficient-for-g-t-0}:
\begin{multline}
    \label{eq-matrix-coefficient-for-g-t-1}
  \int _K \int_K \int_{\mathfrak{a}'}\int _{\mathfrak{n}'} \int_{\mathfrak{a}}\int_{\mathfrak{n}} 
  \\
  f_t \bigl (\ell(k\exp(tX')\exp(tY')\exp(tX)\exp(tY))^{-1}\bigr  ) 
  \\
   \qquad \qquad \qquad \qquad \times   e^{-i  \nu(X)} \bigl \langle \varphi (\ell),\sigma (\exp(tX')\exp(tY'))^{-1}\psi(k)\bigr \rangle  
  \\
 \times  e^{+ t (2\rho' (X') + \rho (X))}d\ell \, dk\,   dX'\, dY'\, dX\, dY .
\end{multline}
The point about this reformulation is that the expression 
\[
k\exp(tX')\exp(tY')\exp(tX)\exp(tY) \in G_t
\]
that is included in the integrand above determines a diffeomorphism
\begin{equation}
    \label{eq-dnc-diffeomorphism}
K \times \mathfrak{a}'\times \mathfrak{n}'\times \mathfrak{a}\times \mathfrak{n} \times \R \stackrel \cong \longrightarrow 
\GG
\end{equation}
for which 
\[
(k,X',Y',X,Y,0) \longmapsto (k, X'{+}Y'{+}X{+}Y)\in G_0 = K \ltimes (\lie{g}/\lie{k}).
\]
So if the functions $f_t$ on the individual $G_t$ ($t{>}0$) assemble and extend  to a smooth, compactly supported function  on $\GG$, then the integrand in \eqref{eq-matrix-coefficient-for-g-t-1} is uniformly compactly supported on $K{\times} \mathfrak{a}'{\times}\mathfrak{n}'{\times}\mathfrak{a}{\times}\mathfrak{n} $ as $t{> }0$ varies, and moreover as $t$ tends to $0$ it converges uniformly to 
\[
 f_0 \bigl (\ell(k, X'{+}Y'{+}X{+}Y)^{-1}\bigr  )    e^{-i  \nu(X)} \bigl \langle \varphi (\ell), \psi(k)\bigr \rangle
\]
(note that extending the functional $\nu$, defined on $\mathfrak{a}$, to a linear functional on $\mathfrak{g}/\mathfrak{k}$ that vanishes on $\mathfrak{a'}$, $\mathfrak{n'}$ and $\mathfrak{n}$ is the same as extending it to a linear functional on $\mathfrak{s}$ that vanishes on the orthogonal complement of $\mathfrak{a}$, and then identifying $\mathfrak{s}$ with $\mathfrak{g}/\mathfrak{k}$). This gives  convergence of the integral in \eqref{eq-matrix-coefficient-for-g-t-1}  to 
\begin{equation}
\label{eq-matrix-coefficient-of-rep-of-g-t}
 \int_K\int_K\int_{\mathfrak{a}'+\mathfrak{n}'+\mathfrak{a}+\mathfrak{n}}
  f_0\bigl (\ell(k,Z)^{-1}\bigr )  e^{-i \nu(Z)}
  \bigl \langle \varphi(\ell), \psi(k)\bigr \rangle \, d\ell\, dk \, dZ ,
\end{equation}
which, as we have seen in \eqref{eq-matrix-coefficient-of-rep-of-motion-group}, is $\langle \varphi, \rho_{\sigma, \nu} (f_0)\psi\rangle$, and the convergence is uniform in $\nu$, as required.
\end{proof}

With this rather technical computation completed, we can readily determine the structure of  the $C^*$-algebra $C^*_r (\GG; I)$ in a way that will be useful for the $K$-theory computations of the next section.

\begin{definition}
 \label{def-mapping-cone-algebra}
 Let $\alpha:A_0 \rightarrow A$ be a  $C^*$-algebra homomorphism.   The \emph{mapping cone}  $C^*$-algebra is 
 \[
 \operatorname{Cone}(\alpha) =
 \bigl \{\,
(f_0,F)\in A_0\oplus C_0([0,\infty), A) : \alpha(f) = F(0)
 \,\bigr \} .
 \]
\end{definition}

\begin{theorem}[Compare  {\cite[Thm.~5.2.2]{HigsonRoman20}}]
\label{thm-cstar-iso-to-mapping-cone}
   Let $\alpha_0\colon C^*_r (G_0;I)\to C^*_r (G;I)$ be the $C^*$-algebra morphism from Lemma~\textup{\ref{lem-mackey-embedding}}. The formula 
    \[    \alpha_I(f) = \bigl ( f_0, t \mapsto \alpha_t(f_t) \bigr )\qquad (f\in C^*_r (\GG ; I) )
    \]
    defines a $C^*$-algebra isomorphism
    \[
    \alpha_I \colon C^*_r (\GG ;I) \stackrel \cong \longrightarrow \operatorname{Cone}(\alpha_0) .
    \]
\end{theorem}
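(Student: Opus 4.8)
The plan is to verify that the stated formula defines a $*$-homomorphism into $\operatorname{Cone}(\alpha_0)$, and then to establish that it is bijective by constructing an explicit inverse on a dense subalgebra and extending by continuity.

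First I would check that $\alpha_I$ lands in the mapping cone. Given $f\in C^*_r(\GG;I)$, write $f_t$ for its restriction to $G_t$; the section $t\mapsto f_t$ is a continuous section of the continuous field $\{C^*_r(G_t;U_I)\}$ that takes values in the sub-field of ideals, hence descends under the quotient maps to a section of $\{C^*_r(G_t;I)\}$. For $t>0$ the map $\alpha_t$ of Lemma~\ref{lem-rescaling-automorpshisms} is a $C^*$-automorphism of $C^*_r(G;I)$, so $t\mapsto\alpha_t(f_t)$ is a bounded path in $C^*_r(G;I)$; its continuity on $(0,\infty)$ follows from the continuity of the field together with the explicit description of $\alpha_t$ via the isomorphism~\eqref{eq-structure-of-face-algebra-recap} (rescaling $\nu\mapsto t^{-1}\nu$ depends continuously on $t$ away from $0$). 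The crucial point is the behaviour as $t\searrow 0$: Theorem~\ref{thm-limit-formula} gives exactly $\lim_{t\searrow 0}\alpha_t(\pi_I(f_t))=\alpha_0(\rho_I(f_0))$, so the path extends continuously to $[0,\infty)$ with value $\alpha_0(\rho_I(f_0))$ at $0$, and in particular $\alpha_0(f_0)=F(0)$ with $F(t)=\alpha_t(f_t)$. Thus $\alpha_I(f)=(f_0,F)\in\operatorname{Cone}(\alpha_0)$. That $\alpha_I$ is a $*$-homomorphism is immediate since each $\alpha_t$ and the restriction maps $f\mapsto f_t$ are $*$-homomorphisms.

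Next, injectivity: if $\alpha_I(f)=0$ then $f_0=0$ and $\alpha_t(f_t)=0$ for all $t>0$, hence $f_t=0$ for all $t>0$ (each $\alpha_t$ is injective and so is $\rho_I$ composed with... wait — more carefully, $\alpha_t$ being an automorphism of $C^*_r(G;I)$ forces $\pi_I(f_t)=0$, i.e. $f_t\in C^*_r(G_t;U_I)$ vanishes in every representation with imaginary infinitesimal character in $\mathfrak a^*_{I,+}$; but together with $f_0=0$ and the description of $\widehat{\GG}$, one sees $f$ vanishes in every irreducible representation of $C^*_r(\GG;I)$, so $f=0$). For surjectivity I would argue that the image is a $C^*$-subalgebra of $\operatorname{Cone}(\alpha_0)$ containing enough elements to be dense, hence everything. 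Concretely: for a smooth, compactly supported, $K$-bi-finite function on $\GG$, the path $t\mapsto\alpha_t(\pi_I(f_t))$ together with its limit $\alpha_0(\rho_I(f_0))$ is visibly in the image; such $f$ are dense in $C^*_r(\GG)$, hence (passing to the quotient) their images under $f\mapsto(f_0,t\mapsto\alpha_t(f_t))$ are dense in the image of $\alpha_I$. To see this image is all of $\operatorname{Cone}(\alpha_0)$, one uses that $\alpha_I$ is a $C^*$-morphism with closed range; so it suffices to show the range is dense. A clean way is to invoke the structure theorems: via Theorems~\ref{thm-i-subquotient-of-g-isomorphism} and~\ref{thm-structure-of-g-0-component-algebra} one identifies both $C^*_r(\GG;I)$ and $\operatorname{Cone}(\alpha_0)$ explicitly in terms of $C_0$-function algebras with values in compacts, indexed by $\sigma\in\widehat M_{I,\mathrm{tempiric}}$ (matching with $\widehat K_I$ via the Mackey bijection on the $G_0$ side), and checks that under these identifications $\alpha_I$ becomes the evident isomorphism on each summand; the only content is the matching of summands, which is Remark~\ref{rem-injectivity-of-alpha-0}.

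The main obstacle is surjectivity — specifically, promoting the pointwise/dense statement to the claim that the range is \emph{all} of $\operatorname{Cone}(\alpha_0)$. A mapping-cone target is not itself of the simple form handled by Dixmier's Stone--Weierstrass theorem, so the cleanest route is to reduce, summand by summand over $\sigma$, to the elementary fact that the mapping cone of the endpoint evaluation $\alpha_{\sigma,0}\colon C_0(\mathfrak a^*_{I,+},\mathfrak K)\to C_0(\mathfrak a^*_{I,+},\mathfrak K)$ (which is an isomorphism here, by Remark~\ref{rem-injectivity-of-alpha-0}, since $\rho_{\sigma,\nu}$ at $\nu\in\mathfrak a^*_{I,+}$ is irreducible of the right dimension) is isomorphic to $C_0(\mathfrak a^*_{I,+}\times[0,\infty),\mathfrak K)$ — and the latter is manifestly the $\sigma$-summand of $C^*_r(\GG;I)$. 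Assembling these summand isomorphisms and checking compatibility with $\alpha_I$ completes the proof; the bookkeeping is exactly as in \cite[Thm.~5.2.2]{HigsonRoman20}, which is why the statement is flagged as a comparison with that reference.
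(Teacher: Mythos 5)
Your well-definedness and injectivity arguments are essentially sound (and well-definedness matches the paper's use of Theorem~\ref{thm-limit-formula}), but the surjectivity argument has a genuine gap, and it is the hard part.

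The problem is in the claim that surjectivity can be checked ``summand by summand over $\sigma$'' by reducing to a map $\alpha_{\sigma,0}\colon C_0(\mathfrak a^*_{I,+},\mathfrak K)\to C_0(\mathfrak a^*_{I,+},\mathfrak K)$ that is ``an isomorphism here, by Remark~\ref{rem-injectivity-of-alpha-0}.'' That remark only asserts that $\alpha_0$ is \emph{injective}; it says nothing about surjectivity, and in fact $\alpha_0$ is not surjective. The two structure theorems present $C^*_r(G_0;I)$ and $C^*_r(G;I)$ as direct sums over \emph{different} index sets ($\widehat K_I$ versus $\widehat M_{I,\mathrm{tempiric}}$), and the Hilbert spaces $\Ind_{K_I}^K H_\tau$ and $\Ind_{P_I}^G H_\sigma$ have different dimensions in general: the representation $\rho_{\sigma,\nu}$ of $G_0$ that figures in the definition of $\alpha_0$ is \emph{not} irreducible — it decomposes as a multiplicity sum of the $\rho_{\tau,\nu}$ according to $\sigma\vert_{K_I}\cong\bigoplus_\tau\mult(\tau:\sigma)\,\tau$ (see the proof of Theorem~\ref{thm-k-theory-of-cstar-calculation}). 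So there is no summand-by-summand isomorphism to appeal to. Worse, invoking ``the Mackey bijection'' to match indices is circular: that bijection is a consequence of the main theorems of the paper, which rest on the present result. And if $\alpha_0$ were an isomorphism, the mapping cone would be contractible by a trivial reason, rendering the entire Connes–Kasparov argument vacuous; the whole point of the paper is that $\alpha_0$ induces an isomorphism \emph{in $K$-theory}, which is much weaker.

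The paper's proof sidesteps all of this and never needs to understand the range of $\alpha_0$. It observes that both $C^*_r(\GG;I)$ and $\operatorname{Cone}(\alpha_0)$ sit in short exact sequences
\[
0 \longrightarrow C_0\bigl((0,\infty),C^*_r(G;I)\bigr) \longrightarrow C^*_r(\GG;I) \longrightarrow C^*_r(G_0;I) \longrightarrow 0
\]
and
\[
0 \longrightarrow C_0\bigl((0,\infty),C^*_r(G;I)\bigr) \longrightarrow \operatorname{Cone}(\alpha_0) \longrightarrow C^*_r(G_0;I) \longrightarrow 0,
\]
that $\alpha_I$ fits into a commuting ladder between them, that the left vertical map (the family $\{\alpha_t\}$ of automorphisms applied pointwise) is an isomorphism, and that the right vertical map is the identity; the five lemma then forces $\alpha_I$ to be an isomorphism. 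You would do well to replace your surjectivity argument with this diagram chase.
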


\begin{proof}
Theorem~\ref{thm-limit-formula} tells us that the morphism $\alpha_I$ is well-defined, in that $t\mapsto \alpha_t (f_t)$ really is a $C_0$-function on $[0,\infty)$ with values in $C^*_r (G;I)$.  That $\alpha_I$ is an  isomorphism follows from the commuting diagram
\[
\xymatrix{
0 \ar[r] & C_0\bigl ( (0,\infty), C^*_r (G;I) \bigr ) \ar[d]^\cong  _{\{\alpha_t\}}\ar[r]& 
C^*_r (\GG; I) \ar[d]_{\alpha_I}\ar[r] & C^*_r (G_0;I) \ar[r] \ar@{=}[d]& 0
\\
0 \ar[r] & C_0\bigl ( (0,\infty), C^*_r (G;I) \bigr ) \ar[r]& 
\operatorname{Cone}(\alpha_0)  \ar[r] & C^*_r (G_0;I) \ar[r] & 0
}
\]
with exact rows, in which $\{ \alpha_t\}$ indicates the morphism that maps a function $t\mapsto f_t$ to the function $t\mapsto \alpha_t (f_t)$.
\end{proof}

\section{Vogan's theorem in K-theory}
\label{sec-vogans-thm}

In this section we shall formulate and prove our main theorem, which frames the Connes-Kasparov isomorphism in operator $K$-theory in purely representation-theoretic terms.

\subsection{Vogan's theorem on minimal K-types}

\begin{definition}[See {\cite[Def.\,5.4.18]{Voganbook}}] 
Let $G$ be a real reductive group with maximal compact subgroup $K$.  If $\tau$ is an irreducible unitary representation of $K$, then   its \emph{norm} is 
\[
\| \tau\| = \| \mu + 2 \rho_K\| ,
\]
where $\mu$ is a highest weight for some choice of maximal torus $T\subseteq K$ and some choice of a system of positive roots in $\Delta (\mathfrak{k}_{\C},\mathfrak{t}_{\C})$; the choices do not affect the norm. 
\end{definition}

\begin{definition}
\label{def-min-k-type}
Let $\pi$ be a irreducible unitary (or simply admissible) representation of $G$.  A \emph{minimal $K$-type} of $G$ is any $K$-type of $\pi$ (that is, any irreducible constituent of the restriction of $\pi$ to $K$) whose norm is minimal among the norms of all $K$-types of $\pi$.  
\end{definition}

\begin{remark}
Since every norm-bounded subset of $\widehat K$   is finite, every admissible representation  has at least one, and at most finitely many minimal $K$-types.  
\end{remark}

The simplicity of these definitions belies their importance. We shall be concerned in this paper with the following  fundamental result about minimal $K$-types, due to Vogan, which, as we shall see, completely determines the tempered dual.

\begin{theorem}[Vogan, {\cite[Thm.~1.2]{Vogan07}}]
\label{thm-vogan-minimal-k-type-theorem}
Let $G$ be a real reductive group with maximal compact subgroup $K$.  Every tempiric representation has a unique minimal $K$-type, which occurs in the representation with multiplicity one.  The map that associates to a tempiric representation its minimal $K$-type is a bijection 
\[
\widehat G _{\mathrm{tempiric}}\stackrel \cong \longrightarrow \widehat K .
\]
\end{theorem}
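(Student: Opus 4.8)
The plan is to deduce Theorem~\ref{thm-vogan-minimal-k-type-theorem} from the structural results already assembled, combined with the Connes--Kasparov isomorphism, rather than following Vogan's original representation-theoretic argument. Concretely, I would exploit the fact (Theorem~\ref{thm-cstar-iso-to-mapping-cone}) that each subquotient $C^*_r(\GG;I)$ of the deformation family $C^*$-algebra is the mapping cone of $\alpha_0\colon C^*_r(G_0;I)\to C^*_r(G;I)$, together with the explicit descriptions of the two ends as direct sums of algebras of the form $C_0(\lie{a}^*_{I,+},\lie{K}(\cdot))$ coming from Theorems~\ref{thm-i-subquotient-of-g-isomorphism} and~\ref{thm-structure-of-g-0-component-algebra}. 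A mapping cone $\operatorname{Cone}(\alpha_0)$ has vanishing $K$-theory in both degrees if and only if $\alpha_0$ induces an isomorphism on $K$-theory; and by Connes' reformulation of Connes--Kasparov \cite[Prop.~9]{ConnesNCG}, the evaluation map $C^*_r(\GG)\to C^*_r(G_0)$ is a $KK$-equivalence, which after passing through the filtration by the ideals $C^*_r(\GG;U_I)$ forces each $C^*_r(\GG;I)$ to be $K$-contractible, hence each $\alpha_0$ to be a $K$-theory isomorphism. (In the converse direction, one reverses this: the $K$-theory statement of part (ii) of the main theorem, applied to $G$ and to all the Levi factors $M_I$, is exactly what one needs to run the filtration argument and recover Connes--Kasparov.)

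The heart of the argument is then to identify what ``$\alpha_0$ is a $K$-theory isomorphism'' says representation-theoretically, and this is where Vogan's theorem emerges. First I would compute the $K$-theory of both sides in the case $I=S$ (equivalently, work one stratum at a time, reducing by induction on $|S\setminus I|$ using the six-term sequences attached to the ideals $C^*_r(G;U_I)$, since $M_I$ is again reductive with its own minimal-$K$-type problem of smaller rank). For $I=S$ the space $\lie{a}^*_{S,+}$ is a single point (modulo the center), so $C^*_r(G;S)\cong\bigoplus_{\sigma\in\widehat M_{S,\mathrm{tempiric}}}\lie{K}(\operatorname{Ind}_{P_S}^G H_\sigma)$ and $C^*_r(G_0;S)\cong\bigoplus_{\tau\in\widehat K_S}\lie{K}(\operatorname{Ind}_{K_S}^K H_\tau)$; their $K_0$-groups are the free abelian groups on $\widehat M_{S,\mathrm{tempiric}}$ and on $\widehat K_S=\widehat{K\cap M_S}$ respectively, and $K_1$ vanishes. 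The inductive hypothesis (Vogan for $M_S$) says these two free groups are already identified, compatibly with the map induced by $\alpha_0$; combined with the Frobenius-reciprocity bookkeeping in the proof of Theorem~\ref{thm-full-uniform-admissibility}, this shows $\alpha_0$ is a $K$-theory isomorphism on the $I=S$ stratum precisely when the tempiric representations of $M_S$ biject with $\widehat{K\cap M_S}$ via minimal $K$-types with multiplicity one. Assembling the strata, the full statement that $R(K)\to\ZZ[\widehat G_{\mathrm{tempiric}}]$ is an isomorphism of the stated form is equivalent, via $R(K)=\bigoplus_{I}\ZZ[\widehat K/\!\!\sim_I]$-type decompositions coming from the $\widehat K_I$'s, to the collection of all these stratumwise statements.

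The step I expect to be the main obstacle is the \emph{naturality/compatibility} bookkeeping rather than any single computation: one must check that the map $K_*(\alpha_0)$, computed abstractly from the mapping-cone/filtration picture, coincides on the nose with the ``multiplicity matrix'' map $\tau\mapsto\sum_\pi\operatorname{mult}(\tau,\pi)\cdot\pi$ of part (ii) — i.e.\ that the abstract $K$-theory isomorphism records exactly the branching multiplicities of minimal $K$-types, including the multiplicity-one and uniqueness clauses. This requires tracking how the generators of $K_0$ of the fiberwise-compact-operator algebras correspond to $K$-types on the $G_0$ side and to tempiric representations on the $G$ side, and verifying that the connecting maps in the six-term sequences for $U_{I}\supsetneq U_{J}$ are upper-triangular with respect to the norm filtration on $\widehat K$ so that the whole multiplicity matrix is invertible over $\ZZ$ iff it is unitriangular — which is the $K$-theoretic incarnation of ``$\|\tau\|$ is minimal and occurs once.'' Once that dictionary is in place, the equivalence of (i) and (ii), and hence the bijection $\widehat G_{\mathrm{tempiric}}\xrightarrow{\ \cong\ }\widehat K$, follows formally.
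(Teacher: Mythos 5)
There is a genuine gap, and it sits precisely at the point you flag as ``the main obstacle.'' The paper does not prove Theorem~\ref{thm-vogan-minimal-k-type-theorem}; it cites it from \cite{Vogan07}, and it is quite explicit about why the Connes--Kasparov route cannot recover the full statement. What the $K$-theoretic argument you outline (mapping cone of $\alpha_0$, filtration by the ideals $C^*_r(\GG;U_I)$, induction on real rank) actually delivers is Theorem~\ref{thm-vogan-thm-in-k-theory}: the multiplicity map $\operatorname{mult}\colon R(K)\to R_{\mathrm{tempiric}}(G)$ is an isomorphism of abelian groups. Vogan's theorem is strictly stronger. Your closing step, ``verifying that the connecting maps are upper-triangular with respect to the norm filtration so that the multiplicity matrix is invertible over $\ZZ$ iff it is unitriangular,'' is not a naturality bookkeeping check but is the whole representation-theoretic content that $K$-theory fails to see. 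The norm filtration gives for free that $\operatorname{mult}(\tau:\sigma)=0$ when $\|\tau\|$ is smaller than the norm of every $K$-type of $\sigma$, but several irreducible $K$-types and several tempiric representations can share a norm; within such a block, an invertible integer matrix need not be a permutation matrix with ones on the diagonal, and $K$-theory only certifies invertibility, not unitriangularity.

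The paper isolates exactly this missing ingredient. Immediately after Definition~\ref{def-multiplicity-homomorphism} it observes that the isomorphism $\operatorname{mult}$ is ``very close to a new proof of Vogan's minimal $K$-type theorem,'' and then records a Lemma stating that \emph{if one also assumes} that no $\tau\in\widehat K$ occurs as a minimal $K$-type of two inequivalent tempiric representations, then Vogan's theorem follows from the isomorphism by elementary linear algebra. That auxiliary uniqueness hypothesis is an independent fact from \cite{Vogan07}, and there is no mechanism in the mapping-cone or filtration picture that supplies it. So your argument establishes (modulo the $K$-theory bookkeeping you rightly flag) the paper's Theorem~\ref{thm-vogan-thm-in-k-theory}, but it does not, and without further representation-theoretic input cannot, prove Theorem~\ref{thm-vogan-minimal-k-type-theorem} itself.
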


Among other things, this theorem immediately gives a parametrization of the tempered dual of $G$ in terms of irreducible representations of compact subgroups, for we have, from \eqref{eq-description-of-tempered-dual-via-im-inf-ch} and Vogan's bijection,
\[
 \widehat G_{\mathrm{tempered}} \cong  \bigsqcup _{I\subseteq S}
    \widehat M_{I,\mathrm{tempiric}} \times i \mathfrak{a}^*_{I,+} \cong \bigsqcup _{I\subseteq S}
    \widehat K_{I} \times i \mathfrak{a}^*_{I,+} \cong \widehat{G}_{0}. 
\]
This is the Mackey bijection of Afgoustidis \cite{AfgoustidisMackeyBijection}. 

In the next section we shall prove that the Connes-Kasparov isomorphism in operator $K$-theory is equivalent to     a $K$-theoretic version of Vogan's theorem that involves the following concepts.

\begin{definition}
Let  $G$ be a real reductive group. Denote by $R_{\mathrm{tempiric}}(G)$ the free abelian group on the set of equivalence classes of tempiric representations of $G$.
\end{definition}

\begin{definition}
\label{def-multiplicity-homomorphism}
Denote by 
 \[
    \mathrm{mult} \colon  R(K) \stackrel \cong  \longrightarrow R_{\mathrm{tempiric}} (G)
    \]
the homomorphism of abelian groups defined by 
    \[
    [\tau]\longmapsto \sum_{\sigma\,\,\mathrm{tempiric}} \mult (\tau : \sigma)\cdot  [\sigma]
    \qquad \forall \tau \in \widehat K ,
    \]
where $\mult (\tau : \sigma)$ is the multiplicity with which $\tau$ occurs in the restriction of the tempiric representation $\sigma$ to $K$.
\end{definition}

\begin{remark}
The multiplicity homomorphism is well-defined in view of the fact, noted in Theorem~\ref{thm-full-uniform-admissibility}, that each irreducible representation of $K$ occurs as a $K$-type in at most finitely many pairwise inequivalent tempiric representations.  
Formally, $\mult$ is the transpose of the map induced by restriction of tempiric $G$-representations to $K$, although that map is not well-defined to $R(K)$ since its image contains infinite sums.
\end{remark}

It is a simple matter of solving linear equations to prove, using Vogan's minimal $K$-type theorem (Theorem~\ref{thm-vogan-minimal-k-type-theorem}), that the multiplicity homomorphism in Definition~\ref{def-multiplicity-homomorphism} is an isomorphism.  The main goal of this article is to show that it is also possible to prove that $\mult$ is an isomorphism without using Vogan's minimal $K$-type theorem but instead using the Connes-Kasparov isomorphism.

This is very close to a new proof of Vogan's minimal $K$-type theorem. For instance, if we were to assume that no $\tau\in \widehat K$ may occur as a minimal $K$-type of two inequivalent  tempiric representations, then elementary linear algebra would allow us to deduce
Vogan's theorem from the isomorphism of the multiplicity map:

\begin{lemma}
    Suppose that the multiplicity map is an isomorphism. Suppose, in addition, that each $K$-type occurs in at most one tempiric representation of $G$ as a minimal $K$-type. Then every tempiric representation has a unique minimal $K$-type, which occurs in the representation with multiplicity one, and the map that sends a tempiric representation to its minimal $K$-type is a bijection.\qed 
\end{lemma}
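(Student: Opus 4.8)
The plan is to treat the lemma as elementary linear algebra over $\ZZ$, organised by the norm on $\widehat K$. For a tempiric representation $\sigma$ write $\|\sigma\|$ for the common norm of its minimal $K$-types (every admissible representation has at least one minimal $K$-type, so $\|\sigma\|$ makes sense); by Definition~\ref{def-min-k-type} this is the least value of $\|\tau\|$ among the $K$-types $\tau$ of $\sigma$. Let $a_{\sigma\tau}=\mult(\tau:\sigma)$ be the matrix of the multiplicity map in the distinguished bases $\widehat K$ and $\widehat G_{\mathrm{tempiric}}$; it is column-finite because $\mult$ is a well-defined homomorphism into the \emph{free} abelian group $R_{\mathrm{tempiric}}(G)$. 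Two observations are used throughout: if $a_{\sigma\tau}\neq 0$ then $\|\sigma\|\le\|\tau\|$, and $\tau$ is a minimal $K$-type of $\sigma$ if and only if $a_{\sigma\tau}\neq 0$ and $\|\tau\|=\|\sigma\|$.

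First I would pin down the relevant finite sets. For $r\in\RR$ the set $\widehat K_{\le r}=\{\tau:\|\tau\|\le r\}$ is finite, since norm-bounded subsets of $\widehat K$ are finite; in particular only finitely many norm values are $\le r$. Writing $\mathcal T_{\le r}$ for the set of tempiric $\sigma$ with $\|\sigma\|\le r$, the assignment to each such $\sigma$ of (a choice of) one of its minimal $K$-types gives a map $\mathcal T_{\le r}\to\widehat K_{\le r}$ which is injective by the second hypothesis of the lemma (no irreducible $K$-representation is a minimal $K$-type of two inequivalent tempiric representations); hence $\mathcal T_{\le r}$ is finite and $|\mathcal T_{\le r}|\le|\widehat K_{\le r}|$. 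Conversely, since $a_{\sigma\tau}\neq 0$ and $\|\tau\|\le r$ force $\sigma\in\mathcal T_{\le r}$, the isomorphism $\mult$ restricts to an injection $\ZZ[\widehat K_{\le r}]\hookrightarrow\ZZ[\mathcal T_{\le r}]$, so $|\widehat K_{\le r}|\le|\mathcal T_{\le r}|$. Therefore $|\mathcal T_{\le r}|=|\widehat K_{\le r}|$ for every $r$, and comparing consecutive norm values, $\#\{\sigma:\|\sigma\|=r\}=\#\{\tau:\|\tau\|=r\}$ for each norm value $r$.

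Next I would deduce uniqueness of the minimal $K$-type. Fix a norm value $r$ and suppose some tempiric $\sigma$ with $\|\sigma\|=r$ had two distinct minimal $K$-types $\tau,\tau'$. Then there are two maps from the finite set $\{\sigma':\|\sigma'\|=r\}$ into $\{\tau:\|\tau\|=r\}$, each selecting a minimal $K$-type, agreeing outside $\sigma$ and taking the values $\tau$ and $\tau'$ at $\sigma$; both are injective, hence bijective by the cardinality count just established. But $\tau'$ is a minimal $K$-type of $\sigma$ and so, by hypothesis, of no other tempiric representation, so $\tau'$ is not in the image of the first map --- a contradiction. Hence every tempiric $\sigma$ has a unique minimal $K$-type $\mathrm{mk}(\sigma)$; the hypothesis makes $\mathrm{mk}$ injective on each $\{\sigma:\|\sigma\|=r\}$, the cardinality count makes it surjective onto $\{\tau:\|\tau\|=r\}$, and letting $r$ vary shows that $\mathrm{mk}\colon\widehat G_{\mathrm{tempiric}}\to\widehat K$ is a bijection.

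It remains to show that $\mathrm{mk}(\sigma)$ occurs in $\sigma$ with multiplicity one, and this is the only step that genuinely uses that $\mult$ is an \emph{isomorphism} and not merely injective. Re-index the basis $\widehat G_{\mathrm{tempiric}}$ of $R_{\mathrm{tempiric}}(G)$ by $\widehat K$ via $\mathrm{mk}^{-1}$, and list $\widehat K=\{\tau_1,\tau_2,\dots\}$ with $\|\tau_i\|$ non-decreasing and representations of equal norm consecutive. In this ordering the matrix $A$ of $\mult$ is upper triangular with diagonal entries $\mult(\tau_i:\mathrm{mk}^{-1}(\tau_i))\ge 1$: indeed $a_{\mathrm{mk}^{-1}(\tau_i),\tau_j}\neq 0$ forces $\|\tau_i\|\le\|\tau_j\|$, and when $\|\tau_i\|=\|\tau_j\|$ this makes $\tau_j$ a minimal $K$-type of $\mathrm{mk}^{-1}(\tau_i)$, hence $\tau_j=\tau_i$. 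Both $A$ and its inverse $B$ are column-finite ($B$ because each $\mult^{-1}(\sigma)$ is a finite $\ZZ$-combination of irreducible $K$-representations), so inspecting the identity $BA=I$ column by column --- an easy induction on the column index --- shows that each diagonal entry of $A$ divides $1$ in $\ZZ$ and therefore equals $1$. Thus $\mult(\mathrm{mk}(\sigma):\sigma)=1$ for every tempiric $\sigma$, which together with the previous paragraph completes the proof. The main obstacle is precisely this last point: one needs that an upper-triangular, column-finite integer matrix admitting a column-finite two-sided inverse has unit diagonal --- injectivity of $\mult$ by itself only bounds the diagonal entries away from $0$.
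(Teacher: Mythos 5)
The paper states this lemma without proof (the statement is closed with a $\qed$, and the surrounding discussion simply calls it ``elementary linear algebra''), so there is no paper proof to compare against; your argument is a correct and complete filling-in of the omitted reasoning. The cardinality count at each norm level, the pigeonhole argument for uniqueness of the minimal $K$-type, and the well-ordering of the countable set $\widehat K$ by non-decreasing norm --- which makes the multiplicity matrix $A$ upper triangular with positive integer diagonal and, together with the column-finiteness of both $A$ and $B=A^{-1}$, supports the column-by-column induction in $BA=I$ showing the diagonal entries are units --- are all sound, and you are right to flag that this final step genuinely uses surjectivity of $\mult$ and not merely injectivity.
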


\subsection{The Connes-Kasparov isomorphism}

We turn, finally to the Connes-Kasparov isomorphism. 
In view of the aims of this paper, it is certainly important to note that Lafforgue \cite{LafforgueInventiones} has given a proof of the Connes-Kasparov isomorphism using techniques from $K$-theory and index theory, rather than representation theory.  On the other hand, representation-theoretic treatments of the Connes-Kasparov isomorphism can be found in \cite{Wassermann87,Higson08,AfgoustidisConnesKasparov,ClareHigsonSong24,Bradd23}.  

We shall use the following formulation of the Connes-Kasparov isomorphism, which is due to Connes, see   \cite[Prop.~9,~p.141]{ConnesNCG}.

\begin{theorem}[Connes-Kasparov isomorphism]
Let $G$ be a real reductive group. The $C^*$-algebra $C^*_r (\GG )$ in Definition~\textup{\ref{def-family-group-c-star-algebra}} has vanishing $K$-theory.
\end{theorem}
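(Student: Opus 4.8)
The plan is to deduce the theorem from the structure results of Sections~\ref{sec-reduced-c-star-algebra-and-im-inf-ch}--\ref{sec-deformation-family} together with Vogan's minimal $K$-type theorem (Theorem~\ref{thm-vogan-minimal-k-type-theorem}), applied not only to $G$ but to each of the Levi factors $M_I$. First I would build a finite filtration of $C^*_r(\GG)$ from the ideals attached to the scaled imaginary part of the infinitesimal character. Choosing an enumeration $I_{(1)},\dots,I_{(N)}$ of the subsets of $S$ that refines inclusion (so $I_{(j)}\subseteq I_{(k)}$ forces $j\le k$), each set $V_k=\bigsqcup_{j\le k}\lie{a}^*_{I_{(j)},+}$ is open in $\lie{a}^*_{\dom}$ (being downward closed for $\subseteq$ on index sets) and satisfies $V_k\setminus V_{k-1}=\lie{a}^*_{I_{(k)},+}$, so the corresponding ideals give a chain $0\subseteq J_1\subseteq\cdots\subseteq J_N=C^*_r(\GG)$ whose successive subquotients are the algebras $C^*_r(\GG;I_{(k)})$ of Definition~\ref{def-ideals-and-quotients-of-the-c-star-algebra-of-cts-sections}. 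By repeated use of the six-term exact sequence in $K$-theory it therefore suffices to prove that $K_*(C^*_r(\GG;I))=0$ for every $I\subseteq S$.

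Fix $I$. By Theorem~\ref{thm-cstar-iso-to-mapping-cone} we have $C^*_r(\GG;I)\cong\operatorname{Cone}(\alpha_0)$ for the Mackey morphism $\alpha_0\colon C^*_r(G_0;I)\to C^*_r(G;I)$ of Lemma~\ref{lem-mackey-embedding}, and the six-term sequence of the ideal $C_0\bigl((0,\infty),C^*_r(G;I)\bigr)\triangleleft\operatorname{Cone}(\alpha_0)$ shows that $K_*(\operatorname{Cone}(\alpha_0))=0$ if and only if $(\alpha_0)_*$ is an isomorphism on $K$-theory. To compute the two $K$-groups I would use Theorems~\ref{thm-i-subquotient-of-g-isomorphism} and~\ref{thm-structure-of-g-0-component-algebra}, which present $C^*_r(G;I)$ and $C^*(G_0;I)$ as direct sums, indexed respectively by $\widehat M_{I,\mathrm{tempiric}}$ and $\widehat K_I$, of copies of $C_0(\lie{a}^*_{I,+},\mathfrak K(-))$. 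Inside the linear space $\lie{a}^*_I$ of \eqref{eq-characterization-of-a-i-star}, the stratum $\lie{a}^*_{I,+}$ is the set of $\nu$ with $\langle\nu,\beta\rangle>0$ for all $\beta\in S\setminus I$; being a nonempty open convex cone, it is homeomorphic to $\R^{d_I}$ with $d_I=\dim\lie{a}_I$. Hence Bott periodicity and stability put all the $K$-theory in degree $d_I\bmod 2$ and give $K_{d_I}(C^*_r(G;I))\cong R_{\mathrm{tempiric}}(M_I)$ and $K_{d_I}(C^*(G_0;I))\cong R(K_I)$, with the other $K$-group zero.

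It remains to identify $(\alpha_0)_*\colon R(K_I)\to R_{\mathrm{tempiric}}(M_I)$, and this is the heart of the matter. Unwinding Definition~\ref{def-rho-of-sigma-nu} and the defining identity $\pi_{\sigma,\nu}(\alpha_0(f_0))=\rho_{\sigma|_{K_I},\nu}(f_0)$ of Lemma~\ref{lem-mackey-embedding}, one sees that $\alpha_0$ is $C_0(\lie{a}^*_{I,+})$-linear and that, fibrewise over $\nu$, it is the block inclusion dictated by the decomposition of $K$-representations $\rho_{\sigma|_{K_I},\nu}=\bigoplus_\tau[\sigma|_{K_I}:\tau]\,\rho_{\tau,\nu}$, using the identification of the compact picture of $\Ind_{P_I}^G H_\sigma$ with $\Ind_{K_I}^K(\sigma|_{K_I})$. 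On $K$-theory this sends the canonical generator of the $\tau$-summand to $[\sigma|_{K_I}:\tau]$ times the canonical generator of the $\sigma$-summand, that is, $(\alpha_0)_*[\tau]=\sum_\sigma[\sigma|_{K_I}:\tau]\,[\sigma]$, which is exactly the multiplicity homomorphism $\mathrm{mult}\colon R(K_I)\to R_{\mathrm{tempiric}}(M_I)$ of Definition~\ref{def-multiplicity-homomorphism} for the reductive group $M_I$ with maximal compact subgroup $K_I$. By the remark following that definition---a triangularity argument in the norm ordering on $\widehat K_I$ using Vogan's minimal $K$-type theorem applied to $M_I$---this homomorphism is an isomorphism. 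Hence $(\alpha_0)_*$ is an isomorphism, $K_*(C^*_r(\GG;I))=0$, and the filtration of the first paragraph finishes the proof.

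I expect the main obstacle to be the identification in the third paragraph: matching the $K$-theory generators on the two sides so that no Bott class is lost, verifying that the compact model of $\Ind_{P_I}^G H_\sigma$ really is $\Ind_{K_I}^K(\sigma|_{K_I})$ as a $K$-representation (so that $\alpha_0$ is genuinely the block map described), and checking that the resulting integer matrix is literally $\bigl([\sigma|_{K_I}:\tau]\bigr)$ rather than its transpose or a reindexing. Everything else is a formal consequence of the mapping-cone and six-term exact sequences on top of results already in hand. Read in the other direction, the same computation is precisely the content of the paper's main equivalence: for each $I$, the vanishing $K_*(C^*_r(\GG;I))=0$ is equivalent to $\mathrm{mult}$ being an isomorphism for $M_I$.
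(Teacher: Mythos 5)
The paper does not actually prove this theorem: it states the Connes--Kasparov vanishing as a known result, citing Connes \cite[Prop.~9,~p.141]{ConnesNCG} (behind which sit the proofs of Wassermann, Lafforgue, Afgoustidis, et al.), and then \emph{uses} it to prove Theorem~\ref{thm-vogan-thm-in-k-theory}. What you have written is the converse implication, which the paper only gestures at in the sentence ``Conversely, by arguing exactly as in the above proof, we can prove that $C_r^*(\GG)$ has vanishing $K$-theory if we are given Theorem~\ref{thm-vogan-thm-in-k-theory}.'' Your argument is correct and is in fact that converse, made explicit: you take Vogan's minimal $K$-type theorem (Theorem~\ref{thm-vogan-minimal-k-type-theorem}) as the external representation-theoretic input, observe (as in the remark following Definition~\ref{def-multiplicity-homomorphism}) that it forces the multiplicity map $R(K_I)\to R_{\mathrm{tempiric}}(M_I)$ to be an isomorphism for every $M_I$, feed this through Theorem~\ref{thm-k-theory-of-cstar-calculation} and the mapping-cone identification of Theorem~\ref{thm-cstar-iso-to-mapping-cone} to kill each $K_*(C^*_r(\GG;I))$, and then peel off the strata.

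Two small points of comparison with the paper's machinery. First, because you invoke Vogan's theorem for every Levi $M_I$ at once, you have no need of the induction on real rank that appears in the paper's proof of Theorems~\ref{thm-vogan-thm-in-k-theory} and~\ref{thm-i-subquotient-of-fat-g-vanishes-in-k-theory} (that induction is only required when $C^*_r(\GG)$-vanishing is the \emph{hypothesis} and one is trying to reach $I=S$). Second, your filtration of $C^*_r(\GG)$ by a linear refinement $I_{(1)},\dots,I_{(N)}$ of the inclusion order is a mild sharpening of the paper's cardinality filtration $U_k=\bigcup_{|I|=k}U_I$: yours has a single stratum $C^*_r(\GG;I_{(k)})$ at each stage rather than a direct sum over $|I|=k$, but both are correct and give the same six-term arguments. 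Your openness claim for $V_k=\bigsqcup_{j\le k}\lie a^*_{I_{(j)},+}$ does hold, precisely because the enumeration refines inclusion, so that $\{I_{(1)},\dots,I_{(k)}\}$ is downward closed and $V_k=\bigcup_{j\le k}U_{I_{(j)}}$. The identification of $(\alpha_0)_*$ with $\mult_I$, including the block-diagonal decomposition of $\rho_{\sigma|_{K_I},\nu}$ over $\tau\in\widehat K_I$, is exactly the content of Theorem~\ref{thm-k-theory-of-cstar-calculation}, and the homeomorphism $\lie a^*_{I,+}\cong\R^{d_I}$ (open convex nonempty cone) correctly places all $K$-theory in one parity. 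In short: correct, and faithful to the converse direction the paper sketches but does not write out.
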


\begin{remark}
The morphism $C^*_r (\GG)\to C^*_r (G_0)$ given by  evaluation at $t{=}0$ leads to  an extension of $C^*$-algebras
\[
\xymatrix{
0\ar[r] & C_0\bigl ((0,\infty), C^*_r (G)\bigr) \ar[r] & C^*_r (\GG ) \ar[r] &
C^*(G_0) \ar[r]& 0
}
\]
and hence (using Bott periodicity) a six-term exact sequence in $K$-theory
\[
\xymatrix{
  K_1\bigl ( C^*_r (G)\bigr) \ar[r] & K_0\bigl (C^*_r (\GG )\bigr )  \ar[r] &
K_0(C^*(G_0)) \ar[d]
\\
  K_1\bigl ( C^* (G_0)\bigr) \ar[u] & K_1\bigl (C^*_r (\GG )\bigr )  \ar[l] &
K_0(C^*_r(G)) . \ar[l]
}
\]
The assertion that the $K$-theory of $C^*_r (\GG )$ vanishes is therefore equivalent to the assertion that the vertical maps in the six-term sequence are isomorphisms, and it is these that Connes considers in \cite{ConnesNCG}. 
\end{remark}

\subsection{K-theory of the mapping cone}

The main theorem of this paper is the following consequence of the Connes-Kasparov isomorphism for real reductive groups.  In fact, this statement is equivalent to the Connes-Kasparov isomorphism for real reductive groups.

\begin{theorem}
\label{thm-vogan-thm-in-k-theory}
    Let $G$ be a real reductive group and let $K$ be a maximal compact subgroup of $G$.  The morphism of abelian groups
    \begin{align*}
    \mult: R(K) &\stackrel \cong  \longrightarrow R_{\mathrm{tempiric}} (G),\\
    [\tau] &\longmapsto \sum_{\sigma} \mult (\tau : \sigma)\cdot  [\sigma]
    \qquad \forall \tau \in \widehat K,
    \end{align*}
    is an isomorphism.
\end{theorem}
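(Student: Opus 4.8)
The plan is to deduce Theorem~\ref{thm-vogan-thm-in-k-theory} from the Connes-Kasparov isomorphism by computing the $K$-theory of the $C^*$-algebra $C^*_r(\GG)$ through the filtration by the ideals $C^*_r(\GG;U_I)$, $I\subseteq S$, introduced in Definition~\ref{def-ideals-and-quotients-of-the-c-star-algebra-of-cts-sections}, and matching the resulting $K$-theory data with the combinatorics of the multiplicity map. The first step is to observe that the subquotients of this filtration are the algebras $C^*_r(\GG;I)$, and that by Theorem~\ref{thm-cstar-iso-to-mapping-cone} each of these is isomorphic to the mapping cone $\operatorname{Cone}(\alpha_0)$ of the morphism $\alpha_0\colon C^*_r(G_0;I)\to C^*_r(G;I)$ from Lemma~\ref{lem-mackey-embedding}. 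Since a mapping cone fits into the standard exact sequence relating its $K$-theory to that of the map $\alpha_0$, and since, by Theorems~\ref{thm-i-subquotient-of-g-isomorphism} and~\ref{thm-structure-of-g-0-component-algebra}, the algebras $C^*_r(G;I)$ and $C^*_r(G_0;I)$ are direct sums of copies of $C_0(\mathfrak{a}^*_{I,+},\mathfrak{K})$ indexed by $\widehat M_{I,\mathrm{tempiric}}$ and $\widehat K_I$ respectively, one can read off $K_*(C^*_r(\GG;I))$ explicitly; in particular each $C^*_r(\GG;I)$ has $K$-theory supported in a single degree, given by a direct sum over $\widehat M_{I,\mathrm{tempiric}}$ of copies of $\ZZ$ (after accounting for the dimension shift coming from $\mathfrak{a}^*_{I,+}$, which is an open cone and hence contributes a single Bott-type generator), \emph{provided} the map $\alpha_0$ induces an isomorphism on $K$-theory at the level of the relevant summands.

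Concretely, the heart of the argument is the identification of $\alpha_0$ on $K$-theory. For a fixed $\sigma\in\widehat M_{I,\mathrm{tempiric}}$ and the corresponding summand, $\alpha_0$ is (after the isomorphisms of Theorems~\ref{thm-i-subquotient-of-g-isomorphism} and~\ref{thm-structure-of-g-0-component-algebra}) a block of a matrix of maps $C_0(\mathfrak{a}^*_{I,+},\mathfrak{K}(\Ind_{K_I}^K H_\tau))\to C_0(\mathfrak{a}^*_{I,+},\mathfrak{K}(\Ind_{P_I}^G H_\sigma))$, and the integer matrix it induces on $K_0$ of the fibers (i.e.\ on $\ZZ[\widehat K_I]\to\ZZ[\widehat M_{I,\mathrm{tempiric}}]$) records, for each $\tau\in\widehat K_I$ and each $\sigma\in\widehat M_{I,\mathrm{tempiric}}$, the multiplicity of $\tau$ in $\sigma|_{K_I}$. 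By Frobenius reciprocity this is exactly the multiplicity of $\Ind_{K_I}^K\tau=V_\tau$ in $\Ind_{P_I}^G\sigma\otimes 1$ restricted to $K$, which is the $(M_I,\sigma,\tau)$-entry of the multiplicity map for the reductive group $M_I$. Thus the invariant of the $I$-subquotient is precisely the cokernel (and kernel) of the multiplicity homomorphism $\mathrm{mult}_{M_I}\colon R(K_I)\to R_{\mathrm{tempiric}}(M_I)$. Running the $K$-theory long exact sequences for the filtration and feeding in the Connes-Kasparov vanishing $K_*(C^*_r(\GG))=0$, one gets a system of constraints that forces all of these multiplicity maps to be isomorphisms; taking $I=S$ (so that $M_I=G$, up to the center, which contributes only a free factor that behaves identically on both sides) yields that $\mathrm{mult}\colon R(K)\to R_{\mathrm{tempiric}}(G)$ is an isomorphism. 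An induction on $|S|$ (equivalently on the semisimple rank), using that the $M_I$ for $I\subsetneq S$ have strictly smaller rank and hence, by the inductive hypothesis, already satisfy the theorem, streamlines this: then all subquotients $C^*_r(\GG;I)$ for $I\subsetneq S$ have vanishing $K$-theory, so the filtration collapses and $K_*(C^*_r(\GG))\cong K_*(C^*_r(\GG;S))$, whose vanishing is equivalent, via the mapping cone sequence, to $\mathrm{mult}$ (for $G$ itself) being an isomorphism.

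I expect the main obstacle to be the bookkeeping in the $K$-theory long exact sequences: one must check that the boundary maps in the six-term sequences attached to the filtration steps are compatible with the direct-sum decompositions indexed by $I$, so that the computation really does reduce to the mapping-cone sequences for the individual $\alpha_0$'s rather than mixing different $I$'s. The key point making this work is that each $C^*_r(\GG;I)$ has $K$-theory concentrated in one parity (governed by $\dim\mathfrak{a}^*_{I,+}$, which varies with $|I|$), together with the fact that the multiplicity maps $\mathrm{mult}_{M_I}$ for proper $I$ are isomorphisms by induction, so that no room is left for nontrivial boundary maps and the sequences split degree-by-degree. A secondary technical point is to handle the center of $G$ (the $I=S$ stratum $\mathfrak{a}^*_{S,+}$ need not be a point), but this only introduces a compactly-supported-$K$-theory factor $K^*(\mathfrak{a}_S^*)$ on both sides simultaneously, so it does not affect whether $\mathrm{mult}$ is an isomorphism. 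The reverse implication — that the isomorphism of all the $\mathrm{mult}_{M_I}$ implies Connes-Kasparov — follows by reading the same long exact sequences in the other direction, reconstructing $K_*(C^*_r(\GG))=0$ from the vanishing of the $K$-theory of each subquotient.
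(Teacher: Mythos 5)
Your proposal follows the paper's proof essentially step for step: the filtration of $C^*_r(\GG)$ by the ideals $C^*_r(\GG;U_I)$, the mapping-cone identification $C^*_r(\GG;I)\cong\operatorname{Cone}(\alpha_0)$, the identification of $\alpha_{0*}$ with the multiplicity map $R(K_I)\to R_{\mathrm{tempiric}}(M_I)$ (Theorem~\ref{thm-k-theory-of-cstar-calculation}), and induction on rank together with the Connes--Kasparov vanishing of $K_*(C^*_r(\GG))$ to conclude. One parenthetical slip: if $\alpha_{0*}$ is an isomorphism then $K_*(\operatorname{Cone}(\alpha_0))$ vanishes rather than being a direct sum of copies of $\ZZ$ indexed by $\widehat M_{I,\mathrm{tempiric}}$ --- the latter describes $K_*(C^*_r(G;I))$, not the cone --- but your subsequent argument correctly uses the mapping-cone exact sequence, so this does not affect the proof.
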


To prove the theorem we shall relate the morphism in the statement to the induced maps in $K$-theory associated to the $C^*$-algebra homomorphisms 
\[
\alpha_0 \colon C^*_r (G_0;I)\longrightarrow C^*_r (G;I)\qquad (I \subseteq S)
\]
from Lemma~\ref{lem-mackey-embedding}.   The point is that this is a morphism between $C^*$-algebras whose $K$-theory groups are easily determined in representation theoretic terms.  First, from the isomorphism 
\[
  C^*_r(G_0; I) \stackrel\cong \longrightarrow \bigoplus_{\tau \in \widehat{K}_I} C_0(\mathfrak a^*_{I,+}, \mathfrak{K}(\Ind_{K_I}^K H_\tau))
\]
in Theorem~\ref{thm-structure-of-g-0-component-algebra} we obtain isomorphisms
\[
\begin{aligned}
K_*\bigl (  C^*_r(G_0; I) \bigr ) 
& \stackrel\cong \longrightarrow \bigoplus_{\tau \in \widehat{K}_I} K_*\bigl (  C_0(\mathfrak a^*_{I,+}, \mathfrak{K}(\Ind_{K_I}^K H_\tau))\bigr ) \\
& \,\, \cong\,\,  \bigoplus_{\tau \in \widehat{K}_I} K_*\bigl (  C_0(\mathfrak a^*_{I,+})\bigr ) \\
& \,\, \cong\,\,  \bigoplus_{\tau \in \widehat{K}_I} \Z ,
\end{aligned}
\]
using first the stability property of $K$-theory and then the Bott periodicity theorem, which implies that the $K$-theory of $C_0(\mathfrak a^*_{I,+})$ is the  infinite cyclic group $\Z$.  We fix a choice of generator for each $I$.  Secondly, using Theorem \ref{thm-i-subquotient-of-g-isomorphism} in a similar fashion, we have isomorphisms
\[
K_*\bigl (  C^*_r(G; I) \bigr ) 
 \stackrel\cong \longrightarrow    \bigoplus_{\sigma \in \widehat{M}_{I,\mathrm{tempiric}}} \Z .
\]
\begin{theorem}
\label{thm-k-theory-of-cstar-calculation}
There is a commutative diagram 
\[
\xymatrix@C=40pt{
K_*(C^*_r(G_0;I))\ar[d]_{\cong} \ar[r]^{\alpha_{0*}} & K_*(C^*_r (G;I))\ar[d]^{\cong}
\\
R(K_I) \ar[r]^-{\mult_I} & 
R_{\mathrm{tempiric}} (M_I)
}
\]
in which the vertical isomorphisms are the ones described above, and the bottom morphism is the multiplicity map for the group $M_I$:
\[
\mult_I:[\tau]\mapsto \sum_{\sigma\in \widehat M_{I,\mathrm{tempiric}}} \operatorname{mult}(\tau:\sigma)[\sigma] .
\]
\end{theorem}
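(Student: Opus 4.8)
The plan is to reduce the commutativity of the square to a statement about matrix entries and then to read those entries off from the defining formula for $\alpha_0$ in Lemma~\ref{lem-mackey-embedding}. Recall that, by Theorem~\ref{thm-structure-of-g-0-component-algebra}, the composition of $\rho_\tau$ (from Theorem~\ref{thm-structure-of-g-0-component-algebra-prelim}) with the isomorphism $C^*_r(G_0;I)\cong\bigoplus_{\tau'}C_0(\mathfrak{a}^*_{I,+},\mathfrak{K}(\Ind_{K_I}^K H_{\tau'}))$ is the projection onto the $\tau$-summand, and, by Theorem~\ref{thm-i-subquotient-of-g-isomorphism}, the composition of $\pi_\sigma$ with the corresponding isomorphism for $G$ is the projection onto the $\sigma$-summand. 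Consequently, under the vertical identifications in the statement, $(\rho_\tau)_*$ is the coordinate projection $R(K_I)\to\Z$ onto the coefficient of $[\tau]$, and $(\pi_\sigma)_*$ is the coordinate projection $R_{\mathrm{tempiric}}(M_I)\to\Z$ onto the coefficient of $[\sigma]$ (here we use that one and the same generator of $K_*(C_0(\mathfrak{a}^*_{I,+}))$ is fixed on both sides). Since the coefficient of $[\sigma]$ in $\mult_I([\tau])$ is $\mult(\tau:\sigma)$, the square commutes if and only if
\[
(\pi_\sigma)_*\circ(\alpha_0)_*\bigl([\tau]\bigr)=\mult(\tau:\sigma)\in\Z
\qquad\text{for all }\tau\in\widehat K_I,\ \sigma\in\widehat M_{I,\mathrm{tempiric}}.
\]

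To evaluate this I would compute the composite $C^*$-homomorphism $\pi_\sigma\circ\alpha_0\colon C^*_r(G_0;I)\to C_0(\mathfrak{a}^*_{I,+},\mathfrak{K}(\Ind_{P_I}^G H_\sigma))$. Combining the defining property of $\pi_\sigma$ from Theorem~\ref{thm-surjective-morphism-from-the-i-subquotient-of-g} with that of $\alpha_0$ from Lemma~\ref{lem-mackey-embedding} gives
\[
(\pi_\sigma\circ\alpha_0)(f_0)(\nu)=\rho_{\sigma,\nu}(f_0),
\]
where $\rho_{\sigma,\nu}$ is the representation of Definition~\ref{def-rho-of-sigma-nu} associated to the (in general reducible) restriction $\sigma|_{K_I}$. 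Decomposing $\sigma|_{K_I}\cong\bigoplus_{\tau\in\widehat K_I}\C^{\mult(\tau:\sigma)}\otimes H_\tau$ as a $K_I$-representation, and using that the induction $\Ind_{K_I\ltimes\mathfrak{s}}^{K\ltimes\mathfrak{s}}(\,\cdot\,)$ of Definition~\ref{def-rho-of-sigma-nu} is additive --- via a unitary intertwiner that is manifestly independent of $\nu$, since the $K$-translation action and the pointwise scalar $\mathfrak{s}$-action both respect the direct-sum decomposition of the coefficient Hilbert space --- one obtains that $\pi_\sigma\circ\alpha_0$ is unitarily equivalent, by a $\nu$-independent unitary, to the $C^*$-homomorphism
\[
\bigoplus_{\tau\in\widehat K_I}\bigl(1_{\mult(\tau:\sigma)}\otimes\rho_\tau\bigr)\colon\ C^*_r(G_0;I)\longrightarrow\bigoplus_{\tau\in\widehat K_I}C_0\bigl(\mathfrak{a}^*_{I,+},M_{\mult(\tau:\sigma)}(\mathfrak{K}(\Ind_{K_I}^K H_\tau))\bigr),
\]
where $1_m\otimes\rho_\tau$ denotes the $m$-fold inflation of $\rho_\tau$ and the target is identified with a subalgebra of $C_0(\mathfrak{a}^*_{I,+},\mathfrak{K}(\Ind_{P_I}^G H_\sigma))$ via the block decomposition $\Ind_{P_I}^G H_\sigma\cong\bigoplus_\tau\C^{\mult(\tau:\sigma)}\otimes\Ind_{K_I}^K H_\tau$ of $K$-representations.

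Finally I would pass to $K$-theory. A $*$-isomorphism between stabilizations induces the identity on $K_*\cong\Z$ once the generators are matched; the $m$-fold inflation $1_m\otimes\rho_\tau$ induces $m\cdot(\rho_\tau)_*$ on $K$-theory (a diagonal sum of $m$ copies of a $*$-homomorphism multiplies a $K_0$-class by $m$, compatibly with the Morita identification $M_m(\mathfrak{K})\sim\mathfrak{K}$); and the block inclusions $M_{\mult(\tau:\sigma)}(\mathfrak{K}(\Ind_{K_I}^K H_\tau))\hookrightarrow\mathfrak{K}(\Ind_{P_I}^G H_\sigma)$ each induce the canonical isomorphism on $K_*$. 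Since $(\rho_\tau)_*([\tau'])=\delta_{\tau,\tau'}$ by the first paragraph, summing over $\tau$ yields
\[
(\pi_\sigma)_*\circ(\alpha_0)_*\bigl([\tau]\bigr)=\sum_{\tau'\in\widehat K_I}\mult(\tau':\sigma)\,\delta_{\tau,\tau'}=\mult(\tau:\sigma),
\]
as required. The only genuinely delicate point is the bookkeeping in this last step: keeping the Morita equivalences, the block inclusions, and the fixed generator of $K_*(C_0(\mathfrak{a}^*_{I,+}))$ mutually consistent, so that every map in sight is the identity or multiplication by an integer. Once $\pi_\sigma\circ\alpha_0$ has been brought into the inflated-direct-sum form of the previous paragraph, nothing else in the argument is more than routine.
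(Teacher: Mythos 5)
Your argument is correct and follows essentially the same route as the paper's: both identify $\pi_\sigma\circ\alpha_0$ via the decomposition of $\Ind_{P_I}^G H_\sigma$ (equivalently $L^2(K,H_\sigma)^{K_I}$) into $K_I$-isotypes of $\sigma|_{K_I}$, and both then read off multiplication by $\mult(\tau:\sigma)$ in $K$-theory. The only cosmetic difference is that the paper tests the map on a single element $f_0$ supported on the $\tau$-summand and directly sees the induced map $F\mapsto\mult(\tau:\sigma)\cdot F$, whereas you describe the full homomorphism as a direct sum of inflated copies of the $\rho_{\tau'}$ before passing to $K$-theory; the underlying computation is the same.
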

\begin{proof}
Let $\tau \in \widehat{K}_I$ and let  $\sigma\in \widehat M_{I,\mathrm{tempiric}}$. We need to compute that the bottom  arrow in the diagram
\[
\xymatrix{
K_*(C_r^*(G_0; I)) \ar[d]_{\rho_{\tau*}} \ar[r]^{\alpha_{0,*}} & K_*(C_r^*(G ; I)) \ar[d]^{\pi_{\sigma*}}
\\
K_*\bigl(C_0(\mathfrak a^*_{I,+}, \mathfrak{K}(\Ind_{K_I}^K H_\tau))\bigr) \ar[d]_\cong \ar[r] & K_*\bigl(C_0(\mathfrak a^*_{I,+}, \mathfrak{K}(\Ind_{P_I}^G H_\sigma))\bigr) \ar[d]^\cong
\\
\mathbb{Z} \cdot [\tau] \ar[r] & \mathbb{Z} \cdot [\sigma].
}
\]
is multiplication by $\mult(\tau: \sigma)$.
For this, we shall calculate the map
\[C_0(\mathfrak a^*_{I,+}, \mathfrak{K}(\Ind_{K_I}^K H_\tau)) \longrightarrow C_0(\mathfrak a^*_{I,+}, \mathfrak{K}(\Ind_{P_I}^G H_\sigma))\]
which induces the middle arrow in $K$-theory. Let $F \in C_0(\mathfrak{a}^*_{I, +}, \mathfrak{K}(\Ind_{K_I}^K H_\tau))$, and let $f_0$ be the element of $C_r^*(G_0; I)$ such that $\rho_{\tau, \nu}(f) = F(\nu)$ and $\rho_{\tau', \nu}(f) = 0$ when $[\tau'] \neq [\tau]$. By definition, the composition
\[C_r^*(G_0; I) \to C_r^*(G;I) \to C_0(\mathfrak{a}^*_{I,+}, \mathfrak{K}(\Ind_{P_I}^G H_\sigma)) \]
sends $f_0$ to the function $\rho_{\sigma}(f_0)$ defined by $\rho_\sigma(f_0)(\nu) = \rho_{\sigma, \nu}(f_0)$. 

Now, we have the decomposition
\begin{align*}
L^2 (K,H_\sigma) &= \bigoplus_{\tau'\in\widehat{K}_I }\operatorname{mult}(\tau': \sigma)\cdot  L^2 (K,H_{\tau'}),\\
\rho_{\sigma,\nu}(f_0) &= \bigoplus_{\tau'\in\widehat{K}_I }\operatorname{mult}(\tau': \sigma)\cdot \rho_{\tau',\nu}(f_0),
\end{align*}
where $n \cdot T$ denotes the direct sum of $T$ with itself $n$ times. It follows that the middle arrow in the above diagram is induced by the map
\[F \mapsto \mult(\tau: \sigma) \cdot F,\]
sending a matrix to the direct sum of that matrix with itself $\mult(\tau: \sigma)$ times. In terms of $K$-theory, this corresponds to multiplication by $\mult(\tau: \sigma)$, once appropriate generators corresponding to $[\tau]$ and $[\sigma]$ are fixed.
\end{proof}

We now turn to the proof of Theorem \ref{thm-vogan-thm-in-k-theory}.  We will prove it simultaneously with the following theorem, by using the Connes-Kasparov isomorphism and induction  on the real rank of the group $G$ (note that $G {=} K$ when the real rank is $0$). 

\begin{theorem}
\label{thm-i-subquotient-of-fat-g-vanishes-in-k-theory}
Let $G$ be a real reductive group, and let $S$ be a set of simple roots for the restricted root system $\Delta(\mathfrak{g},\mathfrak{a}_{\min})$. 
For every $I\subseteq S$,    $K_*(C^*_r (\GG ;I)) =0$.
\end{theorem}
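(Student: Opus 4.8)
The plan is to run a simultaneous induction on the real rank of $G$, proving Theorems~\ref{thm-vogan-thm-in-k-theory} and~\ref{thm-i-subquotient-of-fat-g-vanishes-in-k-theory} together. The base case is real rank $0$, where $G=K$, there is only $I=\emptyset$, the deformation family is constant, and $C^*_r(\GG;I)\cong C_0([0,\infty),C^*(K))$ is contractible, so its $K$-theory vanishes; the multiplicity map is the identity. For the inductive step, I would fix $G$ of positive real rank and $I\subseteq S$, and distinguish two cases according to whether $I$ is a proper subset of $S$ or $I=S$.

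First I would treat $I\subsetneq S$. The key point is that the subquotient $C^*_r(\GG;I)$ is built entirely from the group $M_I$, which has strictly smaller real rank than $G$ (since $I\neq S$ forces $\mathfrak{a}_I\subsetneq\mathfrak{a}_{\min}$ to be a proper, nonzero quotient direction, so $M_I$ has real rank equal to $|I|<\operatorname{rank}_{\R}G$ — more precisely $\operatorname{rank}_\R M_I = \operatorname{rank}_\R G - (\dim\mathfrak a_{\min} - \dim\mathfrak a_I)$). Concretely, Theorems~\ref{thm-i-subquotient-of-g-isomorphism} and~\ref{thm-structure-of-g-0-component-algebra} identify $C^*_r(G;I)$ and $C^*(G_0;I)$ with direct sums over $\widehat M_{I,\mathrm{tempiric}}$ (resp.\ $\widehat K_I$) of algebras of the form $C_0(\mathfrak a^*_{I,+},\mathfrak K(\cdots))$, and Theorem~\ref{thm-cstar-iso-to-mapping-cone} identifies $C^*_r(\GG;I)$ with $\operatorname{Cone}(\alpha_0)$. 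The six-term exact sequence of the mapping cone gives $K_*(\operatorname{Cone}(\alpha_0))=0$ if and only if $\alpha_{0*}\colon K_*(C^*(G_0;I))\to K_*(C^*_r(G;I))$ is an isomorphism. By Theorem~\ref{thm-k-theory-of-cstar-calculation} this map is identified with $\mult_I\colon R(K_I)\to R_{\mathrm{tempiric}}(M_I)$, where $K_I = K\cap M_I$ is a maximal compact of $M_I$. Since $\operatorname{rank}_\R M_I<\operatorname{rank}_\R G$, the inductive hypothesis (Theorem~\ref{thm-vogan-thm-in-k-theory} for $M_I$) tells us $\mult_I$ is an isomorphism, hence $\alpha_{0*}$ is an isomorphism, hence $K_*(C^*_r(\GG;I))=0$.

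Next, the case $I=S$: here $\mathfrak a^*_{S,+}$ is a single point (or the intersection of $\mathfrak a_{\min}$ with the center, which is a point when the center of $G$ is compact; in general one carries along the harmless Euclidean factor). In this case $C^*_r(G;S)\cong\bigoplus_{\sigma\in\widehat G_{\mathrm{tempiric}}}\mathfrak K(H_\sigma)$ and $C^*(G_0;S)\cong\bigoplus_{\tau\in\widehat K}\mathfrak K(H_\tau)$, so $K_0$'s are $R_{\mathrm{tempiric}}(G)$ and $R(K)$ respectively, $K_1=0$, and $\alpha_{0*}=\mult$. Thus $K_*(C^*_r(\GG;S))=0$ is \emph{equivalent} to Theorem~\ref{thm-vogan-thm-in-k-theory} for $G$ itself, so this case cannot be obtained from lower-rank induction alone — it must be fed by the Connes-Kasparov isomorphism. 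The mechanism is a filtration argument: order the subsets $I\subseteq S$ by inclusion into a chain compatible with the partition \eqref{eq-partition-of-space-of-im-inf-chs}, giving a filtration of $C^*_r(\GG)$ by ideals $C^*_r(\GG;U_I)$ with successive subquotients $C^*_r(\GG;I)$. Using the already-established vanishing $K_*(C^*_r(\GG;I))=0$ for all $I\subsetneq S$ together with the long exact sequences of the filtration, one gets $K_*(C^*_r(\GG;U_S))\cong K_*(C^*_r(\GG))$; but $U_S=\mathfrak a^*_{\dom}$ so $C^*_r(\GG;U_S)=C^*_r(\GG)$, and by the Connes-Kasparov isomorphism (Connes' form) $K_*(C^*_r(\GG))=0$. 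Peeling off the top stratum of this filtration then forces $K_*(C^*_r(\GG;S))=0$, completing the induction and simultaneously proving Theorem~\ref{thm-vogan-thm-in-k-theory} for $G$ via the identification $\alpha_{0*}=\mult$.

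The main obstacle is bookkeeping in the last paragraph: one must be careful that the filtration of $C^*_r(\GG)$ by the ideals $C^*_r(\GG;U_I)$ really has the $C^*_r(\GG;I)$ as subquotients (this uses \eqref{eq-fmla-for-a-i-plus-star} and continuity of the scaled $\ImInfCh$ from Theorem~\ref{thm-continuity-of-rescaled-im-inf-ch}), and that the repeated six-term exact sequences collapse correctly — i.e.\ that a chain of extensions all of whose ``lower'' subquotients have vanishing $K$-theory forces the $K$-theory of the total algebra to agree with that of the ``top'' subquotient. This is a standard diagram chase once the filtration is set up, but it is where one has to be attentive; the genuinely substantive input (Connes-Kasparov) enters only at the single point where one invokes $K_*(C^*_r(\GG))=0$.
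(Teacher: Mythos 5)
Your proposal takes essentially the same route as the paper: a simultaneous induction on the real rank with Theorem~\ref{thm-vogan-thm-in-k-theory}, using Theorems~\ref{thm-k-theory-of-cstar-calculation} and~\ref{thm-cstar-iso-to-mapping-cone} together with the inductive hypothesis for $M_I$ to dispose of $I\subsetneq S$, then stratifying $i\lie{a}^*_{\dom}$ and invoking the Connes--Kasparov vanishing of $K_*(C^*_r(\GG))$ to handle $I=S$. The one device the paper makes explicit, which settles the bookkeeping concern you flag, is to filter by the open sets $U_k=\bigcup_{|I|=k}U_I$ (the individual $U_I$ are not nested), whose successive subquotients are the finite direct sums $\bigoplus_{|I|=k}C^*_r(\GG;I)$.
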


\begin{proof}[Proof of Theorems \ref{thm-vogan-thm-in-k-theory} and \ref{thm-i-subquotient-of-fat-g-vanishes-in-k-theory}]

Suppose that Theorem \ref{thm-vogan-thm-in-k-theory} holds for groups with real rank strictly less than $r$, and suppose that $G$ has real rank $r$.  
Firstly, suppose that $I \subset S$ is a proper subset of $S$. As the group $M_I$ has lower real rank, the morphism
\[R(K_I) \to R_{\mathrm{tempiric}}(M_I)\]
is an isomorphism by the inductive hypothesis. By Theorem \ref{thm-k-theory-of-cstar-calculation}, this means that $\alpha_{0}: C^*(G_0;I) \to C^*_r(G;I)$ induces an isomorphism on $K$-theory, so by Theorem \ref{thm-cstar-iso-to-mapping-cone}, we obtain that $K_*(C^*_r(\GG;I)) = 0$ for $I\subset S$ proper.

Now, for each $0 \leq k \leq r$, define 
\[U_k = \bigcup_{|I| = k} U_I,\]
where $|I|$ denotes the cardinality of $I$. Define $C_r^*(\GG; U_k)$ as the $C^*$-algebra corresponding to the open set $U_k$. Then $U_k \setminus U_{k-1}$ is the disjoint union of closed subsets
\[U_k \setminus U_{k-1} = \bigsqcup_{|I| = k} \mathfrak{a}^*_{I,+},\]
and so we have the exact sequence 
\[ 0 \to C_r^*(\GG;U_{k-1}) \to C_r^*(\GG ; U_k) \to \bigoplus_{|I| = k}  C_r^*(\GG; I) \to 0.\]
It therefore follows from the $6$-term exact sequence in $K$-theory and induction on $k$ that $K_*(C_r^*(\GG; U_k)) = 0$ for $k < r$.

Now consider $I = S$. In this case, we note that
\[U_r \setminus U_{r-1} = \mathfrak{a}^*_{S,+},\] 
and $C_r^*(\GG; U_r) = C_r^*(\GG)$, so we have an exact sequence
\[0 \to C_r^*(\GG; U_{r-1}) \to C_r^*(\GG) \to  C_r^*(\GG; S) \to 0.\]
As both $C_r^*(\GG; U_{r-1})$ and $C_r^*(\GG)$ vanish in $K$-theory (this is where we use Connes-Kasparov isomorphism), so must $C_r^*(\GG; S)$, which proves Theorem \ref{thm-i-subquotient-of-fat-g-vanishes-in-k-theory} for $G$. By Theorem \ref{thm-k-theory-of-cstar-calculation}, this also proves Theorem \ref{thm-vogan-thm-in-k-theory} for $G$.
\end{proof}

We have used the Connes-Kasparov isomorphism to prove Theorem \ref{thm-vogan-thm-in-k-theory}. Conversely, by arguing exactly as in the above proof, we can prove that $C_r^*(\GG)$ has vanishing $K$-theory if we are given Theorem \ref{thm-vogan-thm-in-k-theory}. That is, we have the following.

\begin{theorem}
The assertion that the multiplicity map in Definition~\textup{\ref{def-multiplicity-homomorphism}} is an isomorphism, for every real reductive group, is equivalent to the Connes-Kasparov isomorphism for every real reductive group. \qed
\end{theorem}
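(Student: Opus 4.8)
The forward implication is already in hand: the proof of Theorems~\ref{thm-vogan-thm-in-k-theory} and \ref{thm-i-subquotient-of-fat-g-vanishes-in-k-theory} shows that the Connes-Kasparov isomorphism implies that the multiplicity map of Definition~\ref{def-multiplicity-homomorphism} is an isomorphism.  The plan for the converse is to re-run that argument ``in reverse,'' using the hypothesis that the multiplicity map $\mult\colon R(K)\to R_{\mathrm{tempiric}}(G)$ is an isomorphism for \emph{every} real reductive group wherever the earlier proof invoked the Connes-Kasparov isomorphism, so as to conclude that $K_*(C^*_r(\GG))=0$ for every real reductive group $G$.

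First I would fix a real reductive group $G$ together with a set $S$ of simple restricted roots, and note that for each $I\subseteq S$ the Levi factor $M_I$ is again a real reductive group, so the hypothesis applies to it: the multiplicity map $\mult_I\colon R(K_I)\to R_{\mathrm{tempiric}}(M_I)$ is an isomorphism.  By the commuting square of Theorem~\ref{thm-k-theory-of-cstar-calculation}, this forces the map $\alpha_{0,*}\colon K_*(C^*_r(G_0;I))\to K_*(C^*_r(G;I))$ induced by the morphism $\alpha_0$ of Lemma~\ref{lem-mackey-embedding} to be an isomorphism as well.  Then, using Theorem~\ref{thm-cstar-iso-to-mapping-cone}, which identifies $C^*_r(\GG;I)$ with the mapping cone $\operatorname{Cone}(\alpha_0)$, together with the elementary fact that the mapping cone of a $C^*$-algebra homomorphism is $K$-theory acyclic precisely when that homomorphism induces an isomorphism on $K$-theory, I would conclude that $K_*(C^*_r(\GG;I))=0$ for \emph{every} $I\subseteq S$.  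Note that the case $I=S$ is now on exactly the same footing as the others, whereas in the forward direction it was the single stratum at which the Connes-Kasparov isomorphism itself had to be used.

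Finally I would assemble these vanishing statements just as in the proof of Theorem~\ref{thm-i-subquotient-of-fat-g-vanishes-in-k-theory}.  With $U_k=\bigcup_{|I|=k}U_I$ one has the extensions
\[
0\longrightarrow C^*_r(\GG;U_{k-1})\longrightarrow C^*_r(\GG;U_k)\longrightarrow \bigoplus_{|I|=k}C^*_r(\GG;I)\longrightarrow 0,
\]
and since the quotient terms are $K$-theory acyclic, an induction on $k$ using the six-term exact sequence gives $K_*(C^*_r(\GG;U_k))=0$ for every $k$.  Taking $k$ equal to the real rank of $G$, where $U_k=\mathfrak{a}^*_{\dom}$ and hence $C^*_r(\GG;U_k)=C^*_r(\GG)$, yields $K_*(C^*_r(\GG))=0$, which is the Connes-Kasparov isomorphism for $G$.

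I do not expect a real obstacle, since all the analytic content---the scaled imaginary part of the infinitesimal character and its continuity, the limit formula of Theorem~\ref{thm-limit-formula}, the mapping-cone description, and the $K$-theory computation of Theorem~\ref{thm-k-theory-of-cstar-calculation}---has already been established.  The only point that requires care is the logical bookkeeping: the forward implication proceeds by induction on real rank and handles $I=S$ separately via the Connes-Kasparov isomorphism (because $M_S$ need not have smaller real rank than $G$), whereas the converse requires no rank induction at all, since the multiplicity-map hypothesis is assumed uniformly for all real reductive groups and hence applies directly to $M_S$.
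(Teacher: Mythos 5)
Your proposal is correct and takes essentially the same route as the paper, which itself only sketches the converse with the phrase ``arguing exactly as in the above proof.''  Your fleshing-out is accurate---in particular your observation that the uniform hypothesis on all real reductive groups makes the rank induction unnecessary in the converse direction (since it applies directly to $M_S$, the stratum where the forward direction had to invoke Connes--Kasparov) is a correct and worthwhile clarification of what the paper leaves implicit.
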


\appendix
\section{Appendices} 
\subsection{Consequences of the Connes-Kas\-parov iso\-morphism}
\label{appendix-connes-kasparov}

Lafforgue used the Connes-Kasparov isomorphism to obtain a new proof in \cite{LafforgueICM} of Harish-Chandra's equal rank criterion for the existence of discrete series representations of a   connected real reductive group with compact center \cite[Thm.\,13]{HarishChandra66}, along with a new proof of Harish-Chandra's parametrization of the discrete series representations \cite[Thm.\,16]{HarishChandra66}, when they exist.  The purpose of this section is simply to recall that following result also follows from Harish-Chandra's classificiation of the discrete series, and therefore  may also be viewed as consequences of the Connes-Kasparov isomorphism:

\begin{theorem*}[Theorem~\ref{thm-uniform-admissibility-for-discrete-series} in this text]
Let $G$ be a real reductive group with compact center that admits discrete series representations and let $K$ be a maximal compact subgroup of $G$. Each irreducible representation of $K$ occurs as a $K$-type in at most finitely many discrete series representations of $G$, up to unitary equivalence.
\end{theorem*}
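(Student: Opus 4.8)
The plan is to deduce the statement from Harish-Chandra's classification of the discrete series --- which, as Lafforgue showed, is itself a consequence of the Connes--Kasparov isomorphism --- together with the classical description of the $K$-types of a discrete series representation. Everything reduces to one inequality: if a fixed $\tau\in\widehat K$ occurs as a $K$-type of a discrete series representation $\pi$ of $G$, then the Harish-Chandra parameter of $\pi$ has norm at most $\|\tau\|$.

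First I would recall the ingredients. Since $G$ has compact center and admits discrete series, it has equal rank, so there is a compact Cartan subgroup $T$ with Lie algebra $\mathfrak t\subseteq\mathfrak k$, which I use to compute infinitesimal characters (taking $\mathfrak h_\C=\mathfrak t_\C$). Harish-Chandra's classification puts the discrete series of $G$ in bijection with the regular points of a fixed lattice coset $\Xi\subseteq i\mathfrak t^*$, modulo the action of a finite Weyl group $W$ (equal to $W(\mathfrak k_\C,\mathfrak t_\C)$ when $G$ is connected); the representation $\pi_\lambda$ attached to a regular parameter $\lambda$ has infinitesimal character $\lambda$ (in particular $\ImInfCh(\pi_\lambda)=0$, consistently with Theorem~\ref{thm-discrete-series-have-real-inf-ch}). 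I would also recall the description of the $K$-types of $\pi_\lambda$: with respect to the positive system of $\Delta(\mathfrak g_\C,\mathfrak t_\C)$ for which $\lambda$ is dominant, and writing $\rho_n$ and $\rho_c$ for the half-sums of its noncompact and compact members, the lowest $K$-type of $\pi_\lambda$ has highest weight $\Lambda_\lambda=\lambda+\rho_n-\rho_c$, and every $K$-type of $\pi_\lambda$ has highest weight of the form $\Lambda_\lambda$ plus a nonnegative integral combination of positive roots.

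Next I would establish the key inequality. Since $\rho_K=\rho_c$, a one-line computation gives $\Lambda_\lambda+2\rho_K=\lambda+\rho$. Let $\tau=V_\mu$ be a $K$-type of $\pi_\lambda$ with $\mu$ dominant; then $\mu+2\rho_K=(\lambda+\rho)+\beta$ for some $\beta$ in the cone spanned by the positive roots. Because $\lambda+\rho$ is dominant and nonzero (as $\lambda$ is regular), $\langle\mu+2\rho_K,\lambda+\rho\rangle\ge\langle\lambda+\rho,\lambda+\rho\rangle>0$, so Cauchy--Schwarz gives $\|\tau\|=\|\mu+2\rho_K\|\ge\|\lambda+\rho\|\ge\|\lambda\|$, the final step holding because $\lambda$ and $\rho$ are both dominant. (Equivalently, $\Lambda_\lambda$ is the minimal $K$-type of $\pi_\lambda$ in the sense of Definition~\ref{def-min-k-type}, with norm $\|\lambda+\rho\|\ge\|\lambda\|$.) Now fix $\tau\in\widehat K$: every discrete series of $G$ containing $\tau$ has parameter in the set $\{\lambda\in\Xi:\lambda\text{ regular},\ \|\lambda\|\le\|\tau\|\}$, which is a bounded region of a lattice and hence finite; passing to the quotient by the finite group $W$ still leaves finitely many representations, which proves the theorem.

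I do not expect a conceptual obstacle: the argument merely invokes Harish-Chandra's classification and the classical description of the $K$-types of discrete series. The one point requiring real care is the bookkeeping with conventions --- the positive system varying with $\lambda$, the identification $\rho_K=\rho_c$, and the precise lattice coset $\Xi$ --- so that the key inequality emerges in the clean form $\|\tau\|\ge\|\lambda\|$; and, if one wants the general (possibly disconnected) statement of Theorem~\ref{thm-uniform-admissibility-for-discrete-series} rather than the version recalled here, a short supplementary reduction to the connected case.
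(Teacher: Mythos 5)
Your proof is correct, but it takes a genuinely different route from the one in the paper. Both arguments start from Harish-Chandra's parametrization of the discrete series by regular points $\lambda$ of a lattice modulo a finite Weyl group, and both reduce the theorem to bounding $\|\lambda\|$ by a constant depending only on $\tau$. The difference lies in how that bound is obtained. You invoke the lowest $K$-type formula $\Lambda_\lambda = \lambda + \rho_n - \rho_c$ together with the cone structure of the $K$-type support of $\pi_\lambda$ (a Blattner/Hecht--Schmid-type input), then deduce $\|\tau\|\ge\|\lambda+\rho\|\ge\|\lambda\|$ by a dominant-chamber Cauchy--Schwarz argument. The paper instead evaluates the Casimir $\Omega_{\mathfrak g}=\Delta_{\mathfrak s}+\Omega_{\mathfrak k}$ on a unit vector in the $\tau$-isotypical subspace: unitarity forces $\langle v,\Delta_{\mathfrak s} v\rangle\le 0$, and combined with $\InfCh_{\Omega_{\mathfrak g}}(\pi_\lambda)=\|\lambda\|^2-\|\rho_G\|^2$ this gives $\|\lambda\|^2\le\|\mu\|^2-\|\rho_K\|^2+\|\rho_G\|^2$. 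The paper's route is lighter on representation-theoretic input --- it uses only the infinitesimal character of $\pi_\lambda$, not the $K$-type multiplicity structure --- which matters in the context of the surrounding remark: the goal there is to exhibit the theorem as a consequence of the Connes--Kasparov isomorphism via Lafforgue, and Lafforgue's work recovers Harish-Chandra's parameters and infinitesimal characters but not the lowest-$K$-type or Blattner data your argument needs. Your version gives a cleaner final inequality ($\|\tau\|\ge\|\lambda\|$ rather than a Casimir estimate), at the cost of that stronger hypothesis. You also flag but do not carry out the reduction from the general linear real reductive case to the connected case; the paper handles this by Frobenius reciprocity, noting that restricting a discrete series to the identity component yields a finite sum of discrete series with the same infinitesimal character.
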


\begin{proof} Assume first that $G$ is a connected group that admits discrete series.  Let $T$ be a maximal torus in $K$.  According to Harish-Chandra, the discrete series are parametrized by their \emph{Harish-Chandra parameters}, which are certain classes   
\[ 
[\lambda] \in i\mathfrak{t}^* / W(\mathfrak{k}_{\C},\mathfrak{t}_{\C})
\]
such that $\lambda + \rho_G$ is analytically integral.   The  character of the discrete series representation $\pi_\lambda$ associated to $\lambda$ is the  class 
\begin{equation}
\label{eq-inf-ch-of-a-discrete-series}
[\lambda] \in  \mathfrak{t}^*_{\C} / W(\mathfrak{g}_{\C},\mathfrak{t}_{\C})
\end{equation}
(according to Harish-Chandra's criterion, discrete series representations exist for $G$ if and only if $\mathfrak{t}$ by itself is a Cartan subalgebra of $\mathfrak{g}$). It therefore follows from a well-known calculation (\emph{c.f.}~\cite[Prop.\,5.28]{KnappBeyond}) that  if 
\[
\Omega_{\mathfrak{g}} = \sum X^aX_a
\]
is the Casimir element in the center of the enveloping algebra for $\mathfrak{g}$ (where $\{ X_a\}$ is basis for $\mathfrak{g}$, and $\{ X^a\}$ is a dual basis for the invariant bilinear form $B$ in \eqref{eq-def-of-b}), and if $\InfCh_{\Omega_{\mathfrak{g}}}$ is the function on $\widehat G$ from Definition~\ref{def-inf-ch}, then 
\[
\InfCh_{\Omega_{\mathfrak{g}}} (\pi_\lambda) = \| \lambda\|^2 - \|\rho_G\|^2 .
\]
 Now write the Casimir as 
\[
\Omega_{\mathfrak{g}} = \Delta_{\mathfrak{s}} + \Omega_{\mathfrak{k}} =  \sum_b Y_b^2   - \sum_c Z_c^2 ,
\]
where $\{ Y_b\}$ is an orthonormal basis for $\mathfrak{s}$ and $\{ Z_c\}$ is an orthonormal basis for $\mathfrak{k}$ in the Cartan decomposition $\mathfrak{g} =  \mathfrak{k}\oplus \mathfrak{s}$.  It follows from the unitarity of $\pi_\lambda$  that if     $\tau$ is a irreducible unitary representation of $K$ with infinitesimal character $\mu\in i\mathfrak{t}^* / W(\mathfrak{k}_{\C},\mathfrak{t}_{\C})$, and if $v$ is a unit vector in the $\tau$-isotypical part of $H_\pi$, then 
\[
 \|\lambda\|^2 = \langle v, \Omega_{\mathfrak{g}} v\rangle = \langle v, \Delta_{\mathfrak{s}} v\rangle +  \|\mu\|^2 -\|\rho_K\|^2 \le   \|\mu\|^2 - \|\rho_K\|^2.
\]
It follows that only finitely many $\pi_\lambda$ can include $\tau$ as a $K$-type, which proves Theorem~\ref{thm-uniform-admissibility-for-discrete-series} in the connected case.  

The general case follows from the connected case by Frobenius reci\-procity, since the restriction of a discrete series representation of $G$ to the connected component of $G$ is a direct sum of discrete series representations, all of them sharing the same infinitesimal character.
\end{proof}

 \begin{remark}
     \label{remark-appendix-harish-chandra-parameters}
     The formula \eqref{eq-inf-ch-of-a-discrete-series} shows immediately that every discrete series representation has real infinitesimal character: recall from Definition~\ref{def-real-and-imaginary-weights} that if $\mathfrak{h}= \mathfrak{t} {\oplus} \mathfrak{a}$ is any $\theta$-stable Cartan subalgebra of $\mathfrak{g}$, then   a weight $\lambda\in \mathfrak{h}_{\C}^*$ is   real if it is imaginary-valued on $\mathfrak{t}$ and real-valued on $\mathfrak{a}$, and this is the case here, where in fact $\mathfrak{a} = 0$.
 \end{remark}

\subsection{Classification of tempered irreducible unitary  representations by tempiric representations} 
\label{sec-proof-of-classification}

We shall sketch a proof  of Vogan's Theorem~\ref{thm-vogan-characterization-of-unitary-reps-with-fixed-im-inf-ch}, which classifies tempered irreducible unitary representations in terms of the imaginary part of the infinitesimal character and tempiric representations Levi subgroups.  For convenience, we restate the result here.  Recall that we had fixed an Iwasawa decomposition $G=KA_{\min}N$ of a real reductive group and that we are denoting by $S\subseteq \Delta(\mathfrak{g},\mathfrak{a}_{\min})$ the associated set of simple restricted roots.  Here, again, is the classification theorem:

\begin{theorem*}[Theorem~\ref{thm-vogan-characterization-of-unitary-reps-with-fixed-im-inf-ch} in this 
text]
Let $I\subseteq S$   and let  $\nu\in  \mathfrak{a}_{I,+}^*$. The correspondence 
\[
\sigma \longmapsto \Ind_{P_I}^G \sigma {\otimes} \exp(i \nu) 
\]
determines a bijection from the set of unitary  equivalence classes of tempiric representations $\sigma$ of $M_I$  to the set of unitary  equivalence classes of irreducible tempered unitary representations $\pi$ of $G$ for which 
$\ImInfCh(\pi) = \nu$.
\end{theorem*}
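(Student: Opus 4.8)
The plan is to verify in turn that the assignment $\sigma\mapsto\Ind_{P_I}^G\sigma\otimes\exp(i\nu)$ lands among irreducible tempered representations with $\ImInfCh=\nu$, that it is injective, and that it is surjective onto such representations; the main tools are Theorem~\ref{thm-harish-chandra-principle-on-cuspidal-principal-series}, the infinitesimal character formula \eqref{eq-inf-ch-of-principal-series}, induction in stages, and the Knapp--Stein theory of $R$-groups. For \emph{well-definedness}, let $\sigma$ be a tempiric representation of $M_I$. Applying Theorem~\ref{thm-harish-chandra-principle-on-cuspidal-principal-series} inside $M_I$ and \eqref{eq-inf-ch-of-principal-series} for $M_I$, the hypothesis $\ImInfCh(\sigma)=0$ forces the inducing parameter to vanish, so $\sigma$ embeds into $\Ind_{Q}^{M_I}\delta\otimes 1$ for some standard cuspidal parabolic $Q=LA_QN_Q$ of $M_I$ and some discrete series representation $\delta$ of $L$; in particular $\InfCh(\sigma)=\InfCh(\delta)$ is real by Theorem~\ref{thm-discrete-series-have-real-inf-ch}. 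Induction in stages embeds $\Ind_{P_I}^G\sigma\otimes\exp(i\nu)$ into $\Ind_{P'}^G\delta\otimes\exp(i\nu)$, where $P'=LA'N'$ is the cuspidal parabolic of $G$ with split part $\mathfrak a'=\mathfrak a_Q\oplus\mathfrak a_I\subseteq\mathfrak a_{\min}$ and $\nu$ is extended to $\mathfrak a'$ by zero on $\mathfrak a_Q$. Unitary parabolic induction preserves temperedness, and \eqref{eq-inf-ch-of-principal-series} applied to $P'$ shows that this induced representation, hence $\Ind_{P_I}^G\sigma\otimes\exp(i\nu)$ (a direct summand of it, with the same infinitesimal character), has $\ImInfCh$ equal to $i\nu$, i.e. to $\nu\in\mathfrak a^*_{\dom}$ (recall $\nu\in\mathfrak a^*_{I,+}$ is already dominant). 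Finally, irreducibility: by \eqref{eq-characterization-of-a-i-star} and Definition~\ref{def-a-i-plus}, every simple root $\beta\in S\smallsetminus I$ pairs strictly positively with $\nu$, so no reduced root of $(\mathfrak g,\mathfrak a_I)$ vanishes on $\nu$ and $\nu$ has trivial stabilizer in $W(\mathfrak g,\mathfrak a_I)$; by the $R$-group criterion for irreducibility of representations unitarily induced from tempered representations of a Levi subgroup (see \cite[Ch.~14]{KnappRepTheorySemisimpleGroups}) the $R$-group of $(\sigma,\nu)$ is trivial, so the induced representation is irreducible.

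\emph{Injectivity} is immediate from the same regularity: if $\Ind_{P_I}^G\sigma_1\otimes\exp(i\nu)\cong\Ind_{P_I}^G\sigma_2\otimes\exp(i\nu)$ are irreducible tempered representations, then by the classification of the constituents of representations induced from tempered representations of Levi subgroups (Knapp--Stein, Knapp--Zuckerman; \cite[Ch.~14]{KnappRepTheorySemisimpleGroups}) the pairs $(\sigma_1,\nu)$ and $(\sigma_2,\nu)$ are $W(\mathfrak g,\mathfrak a_I)$-conjugate, and since the $\mathfrak a_I$-parameter is unchanged and $\nu$ has trivial stabilizer the conjugating element is trivial, whence $\sigma_1\cong\sigma_2$.

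For \emph{surjectivity}, let $\pi$ be an irreducible tempered representation with $\ImInfCh(\pi)=\nu\in\mathfrak a^*_{I,+}$. By Theorem~\ref{thm-harish-chandra-principle-on-cuspidal-principal-series}, in the refined form in which the cuspidal parabolic may be taken standard and the parameter dominant, $\pi$ embeds into $\Ind_{P_J}^G\delta\otimes\exp(i\lambda)$ for some $J\subseteq S$ with $P_J$ cuspidal, some discrete series $\delta$ of $M_J$, and a dominant $\lambda\in\mathfrak a_J^*$. By \eqref{eq-inf-ch-of-principal-series}, $i\lambda$ and $i\nu$ define the same class in $i\mathfrak a_{\min}^*/W(\mathfrak g,\mathfrak a_{\min})$, and being both dominant they coincide: $\lambda=\nu$. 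In particular $\nu\in\mathfrak a_J^*$, so $\langle\alpha,\nu\rangle=0$ for every $\alpha\in J$, and Definition~\ref{def-a-i-plus} forces $J\subseteq I$. Then $P_J\subseteq P_I$, the intersection $P_J\cap M_I$ is a cuspidal parabolic of $M_I$, and since $\nu\in\mathfrak a_I^*$ vanishes on its split part, induction in stages gives
\[
\Ind_{P_J}^G\delta\otimes\exp(i\nu)\;\cong\;\Ind_{P_I}^G\bigl(\Ind_{P_J\cap M_I}^{M_I}\delta\otimes 1\bigr)\otimes\exp(i\nu).
\]
Decomposing the finite-length tempered $M_I$-representation $\Ind_{P_J\cap M_I}^{M_I}\delta\otimes 1$ into irreducibles and using that $\Ind_{P_I}^G({-})\otimes\exp(i\nu)$ commutes with finite direct sums, the irreducible $\pi$ embeds into $\Ind_{P_I}^G\sigma\otimes\exp(i\nu)$ for some constituent $\sigma$, which is tempered of $M_I$ and has $\ImInfCh(\sigma)=0$ by \eqref{eq-inf-ch-of-principal-series} for $M_I$, hence is tempiric. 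By the irreducibility established in the first step, $\pi\cong\Ind_{P_I}^G\sigma\otimes\exp(i\nu)$, completing the proof.

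The chief difficulty is the surjectivity step, and within it the structure-theoretic bookkeeping rather than any new representation theory: one must replace the cuspidal parabolic supplied by Theorem~\ref{thm-harish-chandra-principle-on-cuspidal-principal-series} by the prescribed standard parabolic $P_I$, which requires the standard normalization of cuspidal data (standard parabolic, dominant parameter) together with the verification that the resulting $J$ satisfies $J\subseteq I$ so that the iterated induction can be rearranged. The irreducibility and uniqueness used in the first two steps are, by contrast, direct consequences of the triviality of the $R$-group for the regular parameter $\nu\in\mathfrak a^*_{I,+}$, and the infinitesimal character computations are immediate from \eqref{eq-inf-ch-of-principal-series}.
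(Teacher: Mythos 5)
Your overall architecture is the same as the paper's (well-definedness and irreducibility, then injectivity, then surjectivity; surjectivity by cuspidal embedding, conjugation to standard data, and induction in stages through $M_IA_I$). Where you differ is in the tool used to establish irreducibility and injectivity, and there you have a genuine gap.

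The paper's key input is Theorem~\ref{thm-harish-chandra-irreducibility}, which says: if the Levi $L=MA$ is the \emph{full} centralizer of $\nu$ in $G$, and $\sigma$ is irreducible unitary of $M$ with \emph{real infinitesimal character}, then $\Ind_P^G\sigma\otimes\exp(i\nu)$ is irreducible, and the map $\sigma\mapsto\Ind_P^G\sigma\otimes\exp(i\nu)$ is injective on equivalence classes. This is Bruhat's method, extended by Harish-Chandra (with expositions in \cite{KolkVaradarajan96} and \cite{VanDenBanSchlichtkrull05}), and it is tailored precisely to the situation at hand: $P_I$ is \emph{not} cuspidal, and $\sigma$ is tempiric (tempered, real infinitesimal character) but not necessarily discrete series. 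You instead invoke ``the $R$-group criterion for irreducibility of representations unitarily induced from tempered representations of a Levi subgroup,'' citing Knapp Ch.~14. But the Knapp--Stein $R$-group theory in that chapter is formulated for discrete series representations $\delta$ of \emph{cuspidal} parabolics $P'=LA'N'$, where the $R$-group sits inside the stabilizer of $(\delta,\nu')$ in $W(\mathfrak g,\mathfrak a')$. To directly apply that theory here you would need to pass to the cuspidal data underlying $\sigma$, i.e.\ write $\Ind_{P_I}^G\sigma\otimes e^{i\nu}$ as a subrepresentation of $\Ind_{P'}^G\delta\otimes e^{i\nu'}$; but the induced parameter $\nu'$ (extension of $\nu$ by zero on $\mathfrak a_Q=\mathfrak a'\cap\mathfrak m_I$) is \emph{not} regular in $\mathfrak a'$, so the $R$-group of $(\delta,\nu')$ for $G$ is not automatically trivial. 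One would then have to compare the $R$-group of $(\delta,\nu')$ for $G$ against the $R$-group of $(\delta,0)$ for $M_I$ and show they agree, which is a genuinely more delicate argument than what you wrote. The paper's Theorem~\ref{thm-harish-chandra-irreducibility} avoids this entirely, and your regularity observation (that $\nu\in\mathfrak a^*_{I,+}$ pairs nonzero with every root of $(\mathfrak g,\mathfrak a_I)$, so that $L_I=M_IA_I$ is the full centralizer of $\nu$) is exactly the hypothesis that theorem requires.

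The same imprecision affects your injectivity step, where the ``classification of constituents'' result you want is the second assertion of Theorem~\ref{thm-harish-chandra-irreducibility}, again for tempered $\sigma$ with real infinitesimal character, not the Knapp--Stein classification of constituents of cuspidal induction. Your surjectivity argument, by contrast, is essentially the paper's (and is correct), differing only in superficial bookkeeping.
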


We shall use Harish-Chandra's Theorem~\ref{thm-harish-chandra-principle-on-cuspidal-principal-series} in the proof, along with the following result, which is also due to Harish-Chandra.

\begin{theorem}
\label{thm-harish-chandra-irreducibility}
    Let $P=MAN$ be a parabolic subgroup of $G$.  Let $\nu\in \mathfrak{a}^*$, and assume that the Levi subgroup $L=MA$ is the full centralizer of $\nu$ in $G$. If  $\sigma$ is an irreducible unitary representation of $M$ with real infinitesimal character,  then the parabolically induced representation $\Ind_P^G \sigma\otimes \exp (i \nu)$ is irreducible.  Moreover, if $\sigma_1$ and $\sigma_2$ are irreducible unitary representations of $M$ with real infinitesimal character, and if the representations  $\Ind_P^G \sigma_1\otimes \exp (i \nu)$ and  $\Ind_P^G \sigma_2\otimes \exp (i \nu)$ are equivalent, then $\sigma_1$ and $\sigma_2$ are equivalent.
\end{theorem}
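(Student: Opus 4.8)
The plan is to prove both statements at once by computing the intertwining spaces $\Hom_G$ between representations of the form $\Ind_P^G \sigma\otimes\exp(i\nu)$ and showing that the hypothesis on $\nu$ makes them as small as Schur's lemma allows. First I would isolate the one place the hypothesis is used: writing $W = N_G(\mathfrak a)/Z_G(\mathfrak a)$ for the little Weyl group attached to $\mathfrak a$ (note $Z_G(\mathfrak a) = MA$, since $MA$ is a Levi factor), the assumption $Z_G(\nu) = MA$ forces the stabilizer of $\nu$ in $W$ to be trivial. Indeed, any $w$ fixing $\nu$ lifts to some $\dot w\in N_K(\mathfrak a)$ (as $N_K(\mathfrak a)$ surjects onto $W$), and then $\dot w\in Z_G(\nu)=MA$; since $A$ is central in $MA$, the element $\dot w$ centralizes $\mathfrak a$, so $w$ is the identity of $W$.

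Next I would invoke the standard analysis of intertwining operators for unitarily induced representations at a purely imaginary parameter. By Frobenius reciprocity, $\Hom_G\bigl(\Ind_P^G\sigma_1\otimes\exp(i\nu),\ \Ind_P^G\sigma_2\otimes\exp(i\nu)\bigr)$ embeds into the space of $MA$-homomorphisms from a Jacquet module of $\Ind_P^G\sigma_1\otimes\exp(i\nu)$ to $\sigma_2\otimes\exp(i\nu)$. That Jacquet module carries a Bruhat filtration whose subquotients are indexed by $W$, the $w$-th one having leading $\mathfrak a$-exponent $w(i\nu)$ (up to a fixed real $\rho$-shift); since $\exp(i\nu)$ is unitary the induced representation is unitary, hence completely reducible, and one checks that the part where $A$ acts by the honest unitary character $\exp(i\nu)$ can only be contributed by those $w$ with $w(i\nu)=i\nu$, i.e. $w\nu=\nu$. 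By the previous paragraph only $w=1$ survives, and its contribution is a single copy of $\sigma_1\otimes\exp(i\nu)$; hence $\Hom_G(\cdots)$ injects into $\Hom_M(\sigma_1,\sigma_2)$. Taking $\sigma_1=\sigma_2=\sigma$, Schur's lemma gives $\mathrm{End}_G\bigl(\Ind_P^G\sigma\otimes\exp(i\nu)\bigr)=\C$, so the representation is irreducible; and if $\Ind_P^G\sigma_1\otimes\exp(i\nu)\cong\Ind_P^G\sigma_2\otimes\exp(i\nu)$ then $\Hom_M(\sigma_1,\sigma_2)\neq 0$, so $\sigma_1\cong\sigma_2$ as both are irreducible. (The same conclusion can be reached through Harish-Chandra's theory of the standard intertwining operators $\mathcal A(\dot w)$, whose span exhausts $\Hom_G$; compare \cite[Ch.~XIV]{KnappRepTheorySemisimpleGroups}.)

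The hard part will be this second step: justifying that the $\exp(i\nu)$-isotypic part of the Jacquet module — equivalently, the commuting algebra of the induced representation — is entirely accounted for by the Weyl elements preserving the inducing data. For $\sigma$ tempered this is the Knapp--Stein R-group theory, but here $\sigma$ is only assumed unitary with real infinitesimal character, so one must control the $\mathfrak a$-exponents and, in particular, rule out poles of the intertwining operators along the unitary axis in this greater generality; this is precisely where Harish-Chandra's analysis enters, and where the assumption that $\sigma$ has real infinitesimal character is essential, ensuring that the reducibility of $\Ind_P^G\sigma\otimes\exp(i\nu)$ is governed by the stabilizer of $\nu$ alone rather than by that of $\nu$ together with an imaginary contribution coming from $\sigma$. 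The remaining pieces — the Weyl-group bookkeeping and the Schur-type endgame — are routine.
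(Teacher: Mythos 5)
The paper does not prove Theorem~\ref{thm-harish-chandra-irreducibility}: it attributes the result to Bruhat (for $P$ minimal) and to Harish-Chandra (for general $P$) and cites \cite{Bruhat}, \cite{KolkVaradarajan96}, and \cite[Thm.~10.7]{VanDenBanSchlichtkrull05} for the proof. Your sketch should therefore be read against that literature, not against anything internal to the paper.

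Your first step is correct: the hypothesis $Z_G(\nu)=MA$ does force the $W(\mathfrak a)$-stabilizer of $\nu$ to be trivial, by exactly the lift-to-$N_K(\mathfrak a)$ argument you give (and ``$A$ is central in $MA$'' does what you want, since it says every element of $MA$ commutes with $A$, hence centralizes $\mathfrak a$). The second step is the right framework --- Bruhat's intertwining-number method, as generalized by Harish-Chandra to arbitrary parabolics --- but it contains the genuine gap that you flag yourself, and it is worth naming precisely. Over $\R$ the passage from the Bruhat decomposition of $G/P$ to the statement that the commuting algebra of $\Ind_P^G\sigma\otimes\exp(i\nu)$ is supported on the $w=1$ cell is not a clean filtration argument with ``leading $\mathfrak a$-exponent $w(i\nu)$'' on the $w$-th piece. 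The $w\ne 1$ cells contribute exponents of the form $w(i\nu)$ shifted by $\mathfrak a$-exponents coming from the asymptotics of $\sigma$ itself (via a Jacquet-type restriction of $\sigma$ along the cell), and ruling out an accidental coincidence with $i\nu$ is exactly where the hypothesis that $\sigma$ has \emph{real} infinitesimal character must be used: regularity of $\nu$ alone does not suffice for an arbitrary unitary $\sigma$. Making this precise means bounding the commuting algebra by an intertwining number computed from $P$-biinvariant distributions supported on the Bruhat cells and controlling the $\mathfrak a$-exponents cell by cell; that is the analytic content of \cite{KolkVaradarajan96} and \cite[Thm.~10.7]{VanDenBanSchlichtkrull05}, and it is not ``routine bookkeeping.'' What you have is the correct architecture and a correct diagnosis of where the difficulty lies --- an outline of the known proof, not a self-contained argument.
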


\begin{remark}
In the case of the minimal parabolic subgroup Theorem~\ref{thm-harish-chandra-irreducibility} was proved by Bruhat \cite{Bruhat}. Harish-Chandra developed Bruhat's method to yield the general result above, but did not publish his result.  See \cite{KolkVaradarajan96} for further historical remarks, and for an exposition of most of the proof. See \cite[Thm.\,10.7]{VanDenBanSchlichtkrull05} for the remaining details.
\end{remark}

\begin{proof}[Proof of Theorem~\ref{thm-vogan-characterization-of-unitary-reps-with-fixed-im-inf-ch}]
First, it follows from Theorem~\ref{thm-harish-chandra-irreducibility} that all the representations $\Ind_P^G \sigma_1{\otimes} \exp (i \nu)$ listed in the statement of the theorem are irreducible, and it follows from \eqref{eq-inf-ch-of-principal-series} that they all have imaginary part of the infinitesimal character equal to $\nu$. 

Next let $\pi$ be a tempered, irreducible unitary representation of $G$.  According to Theorem~\ref{thm-harish-chandra-principle-on-cuspidal-principal-series}, $\pi$  may be embedded as a subrepresentation of some parabolically induced representation $\Ind_P^G \theta{\otimes}\! \exp(i\nu)$, where $P=MAN$ is a standard parabolic subgroup of $G$, $\theta$ is a discrete series representation of $M$, and $\nu\in \mathfrak{a}^*$.  

If we conjugate by a representative in $K$ of an appropriate element of $W(\mathfrak{g},\mathfrak{a}_{\min})$ then we may arrange that $\nu \in \mathfrak{a}_{\dom}^*$ (recall our convention that we extend $\nu$ to $\mathfrak{a}_{\min}$ by requiring it to be zero on the orthogonal complement of $\mathfrak{a}\subseteq \mathfrak{a}_{\min}$).  The conjugation may take $P$ to a nonstandard parabolic subgroup, but we may replace the conjugated parabolic subgroup by a standard parabolic subgroup in the same associate class without changing  the equivalence class of the parabolically induced representation.

In summary, we may in embed 
$\pi$ as a subrepresentation of some parabolically induced representation $\Ind_P^G \theta\otimes \exp(i\nu)$, where $P=MAN$ is a standard parabolic subgroup of $G$, $\theta$ is a discrete series representation of $M$, and $\nu\in \mathfrak{a}^*\cap \mathfrak{a}_{\dom}^*$.  Of course, $\nu$ is the imaginary part of the infinitesimal character of $\pi$.

Now  let $I\subseteq S$ be such that $\nu\in \mathfrak{a}_{I,+}^*$. The group  $L{=}MA$ is a Levi subgroup of $L_I {=} M_IA_I$, and if we parabolically induce $\theta \otimes\exp (i \nu)$ from $L$ to $L_I$ we obtain a representation of the form 
$\sigma \otimes \exp(i \nu)$.  The representation $\sigma$ is not necessarily irreducible, but it is a direct sum of tempiric representations of $M_I$,
\[
\sigma = \sigma_1\oplus\cdots \oplus \sigma_d.
\]
By induction in stages, $\pi$ embeds in some parabolically induced representation $\Ind_{P_I}^G \sigma_k\otimes \exp(i \nu)$.  It follows from Theorem~\ref{thm-harish-chandra-irreducibility} that the parabolically induced representation is irreducible, and therefore the embedding of $\pi$ into it must be a unitary  equivalence.  It also follows from Theorem~\ref{thm-harish-chandra-irreducibility} that the unitary equivalence class of $\pi$ determines the unitary equivalence class of $\sigma_k$. This completes the proof of the theorem.
\end{proof}

\bibliographystyle{alpha}
\bibliography{refs}

\end{document}